
\documentclass[12pt]{amsart}
\textwidth=14.5cm \oddsidemargin=1cm
\evensidemargin=1cm
\usepackage{amsmath}
\usepackage{amsxtra}
\usepackage{enumitem}
\usepackage{amscd}
\usepackage{amsthm}
\usepackage{hyperref}
\usepackage[T1]{fontenc}
\usepackage{amsfonts}
\usepackage{amssymb}
\usepackage{eucal}
\usepackage[all]{xypic}
\usepackage{color} 
\newtheorem{thm}[subsection]{Theorem}
\newtheorem{cor}[subsection]{Corollary}
\newtheorem{lem}[subsection]{Lemma}
\newtheorem{prop}[subsection]{Proposition}

\theoremstyle{definition}

\newtheorem{defn}[subsection]{Definition}

\newtheorem{rem}[subsection]{Remark}

\newtheorem{remark}[subsection]{Remark}
\newtheorem{question}[subsection]{Question}
\newtheorem{example}[subsection]{Example}

\usepackage{yhmath}
\DeclareSymbolFont{largesymbols}{OMX}{yhex}{m}{n}
\DeclareMathAccent{\widetilde}{\mathord}{largesymbols}{"65}

\newcommand{\thmref}[1]{Theorem~\ref{#1}}
\newcommand{\secref}[1]{\S~\ref{#1}}
\newcommand{\lemref}[1]{Lemma~\ref{#1}}

\newcommand{\defref}[1]{Definition~\ref{#1}}
\newcommand{\propref}[1]{Proposition~\ref{#1}}
\newcommand{\corref}[1]{Corollary~\ref{#1}}

\newcommand{\remref}[1]{Remark~\ref{#1}}
\newcommand{\exref}[1]{Example~\ref{#1}}
\newcommand{\exmref}[1]{Example~\ref{#1}}
\def\qref#1{equation~(\ref{#1})}

\newcommand{\nc}{\newcommand}
\nc{\renc}{\renewcommand}
\nc{\ssec}{\subsection}
\nc{\sssec}{\subsubsection} 
\nc\ol{\overline}
\nc\wt{\widetilde}
\nc\wh{\widehat}

\emergencystretch=2cm
\renc{\d}{{\delta}}
\nc{\Aa}{{\mathbb{A}}}
\nc{\Bb}{{\mathbb{B}}}
 \nc{\Gg}{{\mathbb{G}}}  
\nc{\Hh}{{\mathbb{H}}}
 \nc{\Nn}{{\mathbb{N}}}
\nc{\Pp}{{\mathbb{P}}}
\nc{\Rr}{{\mathbb{R}}}
\nc{\BV}{{\mathbb{V}}}
\nc{\BW}{{\mathbb{W}}}
\nc{\Zz}{{\mathbb{Z}}}
\nc{\Qq}{{\mathbb{Q}}}
\nc{\Ss}{{\mathbb{S}}}
\nc{\Cc}{{\mathbb{C}}}
\nc{\Ff}{{\mathbb{F}}}
\nc{\EL}{{L_\infty}}
 
\nc{\CA}{{\mathcal{A}}}
\nc{\CB}{{\mathcal{B}}}

\nc{\CE}{{\mathcal{E}}}
\nc{\CF}{{\mathcal{F}}}

 \def\ram{^{\mathfrak{r}}}
 \def\ram{^{\it {ram}}}
\def\td{^{def}}
\def\LS{\CL}
\def\TS{\mathcal{T}}
\def\TSb{\bar{\TS}}
\def\LSb{\bar{\CL}}
\nc{\Las}{\mathsf{Las}}
\nc{\CG}{{\mathcal{G}}}
\def\bCG{\bar{\CG}}
\nc{\CL}{{\mathcal{L}}}
\nc{\R}{{\mathcal{R}}}
\nc{\CC}{{\mathcal{C}}}
\nc{\CM}{{\mathcal{M}}}

\nc{\CN}{{\mathcal{N}}}
\nc{\Oog}{{\mathbb{O}}}
\nc{\Oo}{{\mathcal{O}}}
\nc{\CP}{{\mathcal{P}}}
\nc{\CQ}{{\mathcal{Q}}}
\nc{\CR}{{\mathcal{R}}}
\nc{\CS}{{\mathcal{S}}}
\nc{\CT}{{\mathcal{T}}}
\nc{\CU}{{\mathcal{P}}}
\def\CUb{\bar{\CU}}

\nc{\CV}{{\mathcal{V}}}
 
\nc{\CW}{{\mathcal{W}}}
\nc{\CZ}{{\mathcal{Z}}}

\nc{\cM}{{\check{\mathcal M}}{}}
\nc{\csM}{{\check{\mathcal A}}{}}
\nc{\oM}{{\overset{\circ}{\mathcal M}}{}}
\nc{\obM}{{\overset{\circ}{\mathbf M}}{}}
\nc{\oCA}{{\overset{\circ}{\mathcal A}}{}}
\nc{\obA}{{\overset{\circ}{\mathbf A}}{}}
\nc{\ooM}{{\overset{\circ}{M}}{}}
\nc{\osM}{{\overset{\circ}{\mathsf M}}{}}
\nc{\vM}{{\overset{\bullet}{\mathcal M}}{}}
\nc{\nM}{{\underset{\bullet}{\mathcal M}}{}}
\nc{\oD}{{\overset{\circ}{\mathcal D}}{}}
\nc{\obD}{{\overset{\circ}{\mathbf D}}{}}
\nc{\oA}{{\overset{\circ}{\mathbb A}}{}}
\nc{\op}{{\overset{\bullet}{\mathbf p}}{}}
\nc{\cp}{{\overset{\circ}{\mathbf p}}{}}
\nc{\oU}{{\overset{\bullet}{\mathcal U}}{}}
\nc{\oZ}{{\overset{\circ}{\mathcal Z}}{}}
\nc{\ofZ}{{\overset{\circ}{\mathfrak Z}}{}}
\nc{\oF}{{\overset{\circ}{\fF}}}
\def\bJ{\bar{J}}
\nc{\fa}{{\mathfrak{a}}}
\nc{\fb}{{\mathfrak{b}}}
\nc{\fg}{{\mathfrak{g}}}
\nc{\fgt}{{\fg}_!}
\nc{\fgl}{{\mathfrak{gl}}}
\nc{\fh}{{\mathfrak{h}}}
\nc{\fj}{{\mathfrak{j}}}
\nc{\fm}{{\mathfrak{m}}}
\nc{\ft}{{\mathfrak{t}}}
\nc{\fn}{{\mathfrak{n}}}
\nc{\fu}{{\mathfrak{u}}}
\nc{\fp}{{\mathfrak{p}}}
\nc{\fr}{{\mathfrak{r}}}
\nc{\fs}{{\mathfrak{s}}}
\nc{\fsl}{{\mathfrak{sl}}}
\nc{\hsl}{{\widehat{\mathfrak{sl}}}}
\nc{\hgl}{{\widehat{\mathfrak{gl}}}}
\nc{\hg}{{\widehat{\mathfrak{g}}}}
\nc{\chg}{{\widehat{\mathfrak{g}}}{}^\vee}
\nc{\hn}{{\widehat{\mathfrak{n}}}}
\nc{\chn}{{\widehat{\mathfrak{n}}}{}^\vee}

\nc{\fA}{{\mathfrak{A}}}
\nc{\fB}{{\mathfrak{B}}}
\nc{\fD}{{\mathfrak{D}}}
\nc{\fE}{{\mathfrak{E}}}
\nc{\fF}{{\mathfrak{F}}}
\nc{\fG}{{\mathfrak{G}}}
\nc{\fK}{{\mathfrak{K}}}
\nc{\fL}{{\mathfrak{L}}}
\nc{\fM}{{\mathfrak{M}}}
\nc{\fN}{{\mathfrak{N}}}
\nc{\fP}{{\mathfrak{P}}}
\nc{\fU}{{\mathfrak{U}}}
\nc{\fV}{{\mathfrak{V}}}
\nc{\fZ}{{\mathfrak{Z}}}

\nc{\bb}{{\mathbf{b}}}
\nc{\bc}{{\mathbf{c}}}
\nc{\bd}{\partial}
\nc{\be}{{\mathbf{e}}}
\nc{\bj}{{\mathbf{j}}}
\nc{\bn}{{\mathbf{n}}}
\nc{\bp}{{\mathbf{p}}}
\nc{\bq}{{\mathbf{q}}}
\nc{\bF}{{\mathbf{F}}}
\nc{\bu}{{\mathbf{u}}}
\nc{\bv}{{\mathbf{v}}}
\nc{\bx}{{\mathbf{x}}}
\nc{\bs}{{\mathbf{s}}}
\nc{\by}{{\bar{y}}}
\nc{\bw}{{\mathbf{w}}}
\nc{\bA}{{\mathbf{A}}}
\nc{\bK}{{\mathbf{K}}}
\nc{\bI}{{\mathbf{I}}}
\nc{\bB}{{\mathbf{B}}}
\nc{\bG}{{\mathbf{G}}}
\nc{\bC}{{\mathbf{C}}}
\nc{\bD}{{\mathbf{D}}}
\nc{\bP}{{\mathbf{P}}}
\nc{\bH}{{\mathbf{H}}}
\nc{\bM}{{\mathbf{M}}}
\nc{\bN}{{\mathbf{N}}}
\nc{\bV}{{\mathbf{V}}}
\nc{\bU}{{\mathbf{U}}}
\nc{\bL}{{\mathbf{L}}}
\nc{\bT}{{\mathbf{T}}}
\nc{\bW}{{\mathbf{W}}}
\nc{\bX}{{\mathbf{X}}}
\nc{\bY}{{\mathbf{Y}}}
\nc{\bZ}{{\mathbf{Z}}}
\nc{\bS}{{\mathbf{S}}}
\nc{\bSi}{{\bar{\Sigma}}}
\nc{\sA}{{\mathsf{A}}}
\nc{\sB}{{\mathsf{B}}}
\nc{\sC}{{\mathsf{C}}}
\nc{\sD}{{\mathsf{D}}}
\nc{\sF}{{\mathsf{F}}}
\nc{\sG}{{\mathsf{G}}}
\nc{\sK}{{\mathsf{K}}}
\nc{\sM}{{\mathsf{M}}}
\nc{\sO}{{\mathsf{O}}}
\nc{\sQ}{{\mathsf{Q}}}
\nc{\sP}{{\mathsf{P}}}
\nc{\sZ}{{\mathsf{Z}}}
\nc{\sfp}{{\mathsf{p}}}
\nc{\sr}{{\mathsf{r}}}
\nc{\sg}{{\mathsf{g}}}
\nc{\sff}{{\mathsf{f}}}
\nc{\sfb}{{\mathsf{b}}}
\nc{\sfc}{{\mathsf{c}}}
\nc{\sd}{{\ltimes}}

 \def\e{\epsilon}

  \nc{\vol}{{\mathop{\operatorname{\rm vol\,}}}}
\nc{\co}{{\mathop{\operatorname{\rm Core\,}}}}
  \nc{\gal}{{\mathop{\operatorname{\rm Gal\,}}}}
  \nc{\cl}{{\mathop{\operatorname{\rm cl}}}}
  \nc{\disc}{{\mathop{\operatorname{\rm disc}}}}
  \nc{\Sym}{{\mathop{\operatorname{\rm Sym}}}}
   \nc{\Aut}{{\mathop{\operatorname{\rm Aut}}}}
 \nc{\Spec}{{\mathop{\operatorname{\rm Spec}}}}
  \nc{\spec}{{\mathop{\operatorname{\rm Spec}}}}
\nc{\Ker}{{\mathop{\operatorname{\rm Ker}}}}
 \nc{\dom}{{\mathop{\operatorname{\rm dom}}}}
\nc{\End}{{\mathop{\operatorname{\rm End}}}}
 \nc{\Hom}{\operatorname{Hom}}
 \nc{\GL}{{\mathop{\operatorname{\rm GL}}}}
 \nc{\Id}{{\mathop{\operatorname{\rm Id}}}}
 \nc{\rk}{{\mathop{\operatorname{\rm rk}}}}
 \nc{\length}{{\mathop{\operatorname{\rm length}}}}
\nc{\supp}{{\mathop{\operatorname{\rm supp} \, }}}
\nc{\val}{{\rm val}}
 
\nc{\res}{{\mathop{\operatorname{\rm res}}}}

\def\Ind#1#2#3{{#1} {\downarrow}_{#3} {#2} }

\def\meet{\cap}
\def\union{\cup}

\def\si{\sigma}
\def\g{\gamma}
\def\G{\Gamma}

\def\m{\smallsetminus}

\nc{\seq}[1]{\stackrel{#1}{\sim}}

\def\inv{^{-1}}
\def\claim#1{\smallskip {\noindent \bf Claim #1.\quad}}

\def\beq#1{\begin{equation} \label{ #1}}
\def\eeq{\end{equation}}

\def\prf{\begin{proof}}
\def\pv{\end{proof} }
 \def\eprf{\end{proof} }

\def\acl{\mathop{\rm acl}\nolimits}

\def\liminv{\underset{\longleftarrow}{lim}\,}

 \def\lbl#1{ { \color{green}{[#1]}}  \label{#1}  }
 \def\lbl#1{     \label{#1}  }

\def\ba{{\mathbf{a}}}

 \renc{\b}{{\beta}}

\def\genl{\langle L_1 \rangle}

\def\Ind#1#2{#1\setbox0=\hbox{$#1x$}\kern\wd0\hbox to 0pt{\hss$#1\mid$\hss}
\lower.9\ht0\hbox to 0pt{\hss$#1\smile$\hss}\kern\wd0}

\def\two{\bf{2}}

 \def\CH{\CU _h} 
 
    \def\Te{T_{ext}}  
  
\setlist[itemize]{leftmargin=*}

\title{ Definability patterns and their symmetries \footnote{\today}}

\author{Ehud Hrushovski}
\begin{document}

\begin{abstract}  

 We identify a canonical   structure $\co(T)$ associated to any first-order theory $T$, 
reflecting   patterns of partial definability of types, uniformly in any type space over a model of $T$.   The core
generalizes the  imaginary algebraic closure  in a stable theory and the hyperimaginary bounded  closure in simple theories.  The core admits a compact topology, not necessarily Hausdorff, but the Hausdorff part can already be bigger than the Kim-Pillay space of $T$, and in fact accounts for the general Lascar group.

Using the core structure, we obtain simple  proofs of a number of results previously obtained using topological dynamics, but working one power set level lower.  
 The Lascar neighbour relation  is represented by a canonical   relation $L_1$ on  $\co(T)$;
the general Lascar group $G_{\Las}$ of $T$ can be read off this compact structure.  
       This gives  concrete form  to  results of Krupi\'nski, Newelski, Pillay, Rzepecki and Simon, who used topological dynamics applied to  large models of $T$ to show the existence of  compact groups mapping onto $G_{\Las}$.   

In an appendix, we  show that a construction analogous to the above but using infinitary patterns  recovers the Ellis group   of \cite{kns},  and use this to sharpen the cardinality bound for their Ellis group
from $\beth_5$ to $\beth_3$, showing the latter is optimal.    
              
              There is also a close connection to  another school of topological dynamics, set theory and model theory, centered around   the Kechris-Pestov-Todor\v cevi\'c correspondence.   We define the Ramsey property for a first order theory,  and show - as a simple application of the core construction  applied to an auxiliary theory -  that any theory  $T$ admits a canonical minimal Ramsey expansion $T\ram$.   This  was envisaged and proved 
for certain Fraiss\'e classes, first 
    by Kechris-Pestov-Todor\v cevi\'c for expansions by orderings, then by Melleray,  Nguyen Van Th\'e,  Tsankov and Zucker for more general expansions.  
We also show  that for  a complete theory $T$ in a countable language
  with prime model $M$, the universal minimal flow $F$ of $\Aut(M)$ can described as the space of expansions of $M$ to a model of $T\ram$.

\end{abstract}
       

  \maketitle

 \begin{section}{Introduction} 
 
\def\CU{\mathcal{J}}
 Among the gems uncovered in   Shelah's  work on stable theories, but applicable
 to all first order theories, not least was  Galois theory for imaginary algebraic elements (\cite{poizat}).   Following the introduction of imaginaries -  quotients by definable equivalence relations -  there is a duality between the definable closures of algebraic elements, and closed subgroups of finite index of a certain profinite group $\gal_{sh}$, the Galois group of the theory.  For Shelah this served as background to a fundamental result, the finite equivalence relation theorem, that will be  recalled below.
 
Extending the independence theorem for general simple theories, Kim and Pillay \cite{KP} required quotients by equivalence relations that are not definable, but only intersections of definable relations.  This again led to a beautiful Galois correspondence in any first order theory between bounded hyperimaginaries and subgroups of 
 a   compact Hausdorff group $\gal_{KP}$.       It was later incorporated into continuous logic \cite{byu}, and used in relating  finite combinatorial structures with Lie groups.

 Kim and Pillay were guided by   work of Lascar, who studied  small quotients of the automorphism 
group of large saturated models; he showed the existence of a maximal
such quotient group ${G_{\Las}}$.      Lascar denoted his  group   by $G$, writing
in parentheses:  `$G$ for Galois' and asking whether it coincides with the compact group $\gal_{KP}$. 
The latter question was answered negatively by Ziegler.  
The Galois nature of $\gal_{KP}$ has been clearly demonstrated; closed subgroups correspond to
definably closed subsets of the bounded (a.k.a. compact, a.k.a. algebraic) closure of $\emptyset$.  
But no evidence of the Galois
nature of the full group $G_{\Las}$ has emerged; it
does not take part in a meaningful Galois correspondence, and is not the automorphism group of any known structure associated with $T$. 
 
One can of course define, on each sort $V$ of a model $M$ of $T$, a {\em set} $V_{\Las}^M$ of Lascar types as a quotient of the type space of $M$;  but as no topology or algebraic structure is defined on it.  Given $T$ alone, 
only the cardinality (or  `Borel cardinality', see \cite{kms}) of this set is actually well defined; and certainly $G_{\Las}$ cannot be recovered from it.  
Moreover, the Lascar group  may leave no trace on any sort belonging to finitely many variables.

 Our aim is to find an alternative Galois group canonically associated with $T$,
that incorporates $G_{\Las}$ within it.

 Krupi\'nski, Pillay and  Rzepecki, \cite{kp}, \cite{kpr}, \cite{kr}      showed intriguingly that ${G_{\Las}}$ is (in many ways) a quotient group
of a compact Hausdorff topological group; this suggests that $V_\Las$ too is a quotient of
some canonical   space, carrying some structure deduced from $T$.  
In \cite{kns} a cardinality bound was found on the cardinality of the Ellis groups and thus in principle 
provided a canonical compact cover of ${G_{\Las}}$, though at some remove from definable sets of $T$.  (\cite{kns} found a cardinality bound
of $\beth_5$; we will show that the correct bound for their group is $\beth_3$.)  
 This followed a program initiated by Newelski and bringing to work the Ellis groups of topological dynamics, going through certain semigroups. 

Here we start over, in a sense going back to the original setting of the finite equivalence relation theorem.
  Morley   realized that type spaces {\em over models} carry information about a theory
that goes far beyond the space of types over $\emptyset$.  The difficulty, of course, is to extract model-independent information; Morley introduced 
a topology, with properties independent of the (sufficiently saturated) base model, that led quickly to the 
notion of $\omega$-stability and Morley rank.  Shelah saw that one can work with local type spaces:  for a finite set $\g$ of formulas and a distinguished variable $y$, consider
the Boolean algebra of formulas $\phi(x,b)$ with $b$ from $M$, $\phi \in \g$, and the Stone space $S_\g(M)$ \footnote{more properly denoted $S_{\g;x}$ to point out the distinguished variables; but we usually view this data as embedded in $\g$.}.  This led to stability.   In both cases, the ranks essentially  exhaust the information available from the topology alone.

We will enrich the topology to a relational structure on these type spaces, in a certain language $\LS$, the language of definable patterns of $T$.  We will then find a canonical structure $\co(T)$  - the universal pattern space of $T$ - organizing this
 information, depending on the theory alone.  $\co(T)$ is embeddable in the type space of any model, hence of cardinality $\leq 2^{|T|}$.  
  The automorphism group $G$ of $\co(T)$
thus acts on a geometry directly constructed from $T$. 

   $\CU=\co(T)$ is compact, but not necessarily Hausdorff; however each complete type of the core has a canonical compact Hausdorff quotient structure by
a quantifier-free definable equivalence relation, 
and can be viewed as an imaginary sort.   There is a compact Galois duality between these sorts
and their automorphism groups.

Let $\fg(T)$ be the 
group of {\em infinitesimal} elements of $\Aut(\co(T))$
 in its action on $\co(T)$, namely those that stabilize the closure of any open set.
Then $\CG=G/\fg$ is  compact Hausdorff quotient group of $\Aut(\co(T))$.  As $\co(T)$ is homogeneous for atomic types and may have a continuum of pairwise orthogonal ones, $\Aut(\co(T))$
can have cardinality $\beth_2(|T|)$; but in any case  we have $|\CG| \leq 2^{|T|}$ (\corref{size0}).

  We now come to the Lascar neighbour relation $L_1$; it holds between two elements of a
sufficiently saturated model if they  have the same type over some elementary submodel,
 see \ref{lascardistance}.   The pattern language $\LS$   can define it on $S(M)$;   it is represented on $\CU$  by the same formulas.   $L_1$ further induces a relation $L_1$ on  the Hausdorff part $\CH$.
This also determines a distinguished compact subset $L_1$ of $\CG$, namely  the automorphisms that move elements no further than to their Lascar neighbours.      The general Lascar group can then be interpreted as $\CG/ {\genl}$.    Of course  
at this point one may prefer  {\em not} to factor out ${\genl}$, but treat  
and $(\CG,L_1 )$  
as the right invariant  of $T$ in the  world of compact topological groups.   As a check, 
  while the Lascar group and Lascar strong types may not
be visible at the level of finitely many variables, $(\CG,L_1)$ behaves in the model-theoretically expected way, reducing as a projective limit of automorphism groups of the finitary spaces.
  
 \ssec{Definability patterns}
To explain the relational structure on $S_\g(M)$, recall the 
`fundamental order' of the Paris school presentation of stability theory \cite{lp}, and specifically the maximal classes of this order.
For each type $p(x)$ over a model $M$, and formula $\phi(x,y)$, we let $(d_p x)\phi(x,y)$ denote the set of $b \in M^y$ 
with $\phi(x,b) \in p$.  This is simply a subset of $M$.  In some cases it is $0$-definable, so that $(d_px)\phi(x,y) = \theta(y)$;
equivalently, the formulas $\phi(x,y) \& \neg \theta(y)$ and $\neg \phi(x,y) \& \theta(y)$ are {\em omitted} or {\em not represented}
in $p$, meaning that no substitution instance lies in $p$.  A type $p \in S(M)$ is {\em maximal} in the fundamental order
if   no type represents a strictly smaller  set of formulas.

%

     More generally, given a $k$-tuple $(p_1,\ldots,p_k)$ of   types, 
a $k$-tuple $(\phi_1,\ldots,\phi_k)$ of formulas in matching variables, and  a formula $\alpha$,
we can say that $t=(\phi_1,\ldots,\phi_k; \alpha)$ is represented in $(p_1,\ldots,p_k)$ if
for some    $b \in \alpha(M)$ we have $\phi_1(x,b) \in p_1,\cdots,\phi_k(b) \in p_k$.   We let $\R_t$
be a relation symbol, asserting that $t$ is {\em not} represented.  This we view as a $k$-ary relation on any type space $S(M)$; forming a language $\LS$.   Let $\TS$ be the universal theory of $S(M)$ with this structure; it does not depend on the choice of $M$.

\begin{thm}  \label{intro1}
 ${\TS}$ has a unique universal existentially closed model $\CU$.  It has cardinality at most $\lambda_T$, the number
 of finitary types of $T$.   The automorphism group 
 $\Aut(\CU)$ has a canonical compact topological group quotient $\CG=\Aut(\CU_h)$, where 
 $\CU_h$ is the union of the Hausdorff imaginary sorts of $\CU$.  There exists 
a canonical surjective homomorphism $\CG \to L$, where $L$ is the Lascar group of $T$, with
  compactly generated kernel.
\end{thm}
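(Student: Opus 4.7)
The plan is to handle the four claims of the theorem in sequence: existence and uniqueness of $\CU$, the cardinality bound, the compact topological group structure on $\CG$, and the surjection onto the Lascar group.

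First I would establish a Fra\"iss\'e-style existence argument for the universal e.c. model of $\TS$. The $\LS$-structure on $S(M)$ varies functorially in $M$: an elementary embedding $M \prec N$ of models of $T$ yields a restriction map $S(N) \to S(M)$ and, via coheirs, a section in the other direction, both morphisms of $\LS$-structures because the relation $\R_t$ of ``not represented'' is preserved under enlargement of the parameter base, while representation is preserved by coheir lifting. Joint embedding and amalgamation for $\TS$-models then reduce to amalgamation of models of $T$ over a common elementary submodel, which is classical. This gives a unique universal homogeneous e.c. model $\CU$, subject to the cardinality bound below. For that bound, each element of $\CU$ is determined by its complete quantifier-free $\LS$-type, and this type is encoded by finitary data: the predicates $\R_t$ are indexed by finite tuples of formulas and a witnessing formula, so the $\LS$-type of an element amounts to a coherent assignment of Boolean values to conditions on the finitary type spaces of $T$, of which there are $\lambda_T$ many; hence $|\CU| \leq \lambda_T$.

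For the topological group structure on $\CG$, I would use that on each complete $\LS$-type $\pi$ of $\CU$ the topological closure relation is quantifier-free $\emptyset$-definable in $\LS$, yielding a canonical compact Hausdorff quotient. The union $\CU_h$ of these quotients is a disjoint union of compact Hausdorff spaces, and $\CG = \Aut(\CU_h)$ acquires a compact Hausdorff topology via pointwise convergence: the automorphism group of a compact Hausdorff relational structure is itself compact Hausdorff in this topology, being closed in the product of homeomorphism groups of the individual compact Hausdorff sorts.

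Finally, for the surjection $\CG \to G_{\Las}$ with compactly generated kernel: the map is constructed by observing that for a sufficiently saturated $M \models T$ the action of $\Aut(M)$ on $S(M)$ preserves all pattern relations and descends canonically to $\CU$ and then to $\CG$; factoring through $G_{\Las}$ is automatic because elements of $\Aut(M)$ fixing the bounded closure pointwise act trivially on the pattern structure, and surjectivity follows from density of the image combined with compactness of $\CG$. The kernel consists of automorphisms of $\CU_h$ fixing all Lascar strong types, and to identify it as compactly generated one must match the pattern-theoretic relation $L_1$ on $\CU_h$ with the genuine Lascar neighbour relation on $M$, and then show that the kernel equals the subgroup generated by the compact subset $\genl \subseteq \CG$. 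I expect this last identification --- translating patterns of partial definability into Lascar distance --- to be the main obstacle, requiring careful embedding of indiscernible sequences into $\CU_h$ and an analysis of how Lascar equivalence decomposes into finite chains of $L_1$-neighbours within the pattern structure.
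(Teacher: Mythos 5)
Your outline — existence/uniqueness, cardinality bound, compact Hausdorff quotient, surjection to $G_{\Las}$ with compactly generated kernel — matches the paper's architecture, and you correctly identify several key ingredients (the restriction/coheir maps are $\LS$-morphisms; the kernel is generated by the closed set $L_1$; the crux is matching the pattern-theoretic $L_1$ with genuine Lascar distance). But three steps as written do not go through.

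\textbf{Cardinality and existence.} You argue that an atomic $\LS$-type is ``a coherent assignment of Boolean values to conditions on the finitary type spaces of $T$, of which there are $\lambda_T$ many; hence $|\CU|\le\lambda_T$.'' This is off by a power set: a Boolean assignment to $\lambda_T$-many conditions has $2^{\lambda_T}$ possibilities, not $\lambda_T$. More seriously, your logical order is circular: you propose to get a universal e.c. model ``subject to the cardinality bound below,'' but \propref{prop1} (uniqueness of the universal e.c. model) \emph{presupposes} ec-boundedness; without it there is no top element of the category. The paper's actual route handles both at once: \lemref{compact} shows that \emph{every} model of $\TS$ admits an $\LS$-homomorphism into $S(A)$ for any $A\models T^{\pm}$, by taking the space of sort-preserving maps (compact in the pointwise topology since each $S_\g(A)$ is compact) and intersecting the closed, nonempty sets of maps preserving finitely many $\R_t$-instances. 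Choosing $A$ of size $\le|L|$ gives at once $|\CU|\le 2^{|L|}$ and ec-boundedness, and only then does uniqueness of $\CU$ follow. (Your amalgamation observation parallels \lemref{3.5}, but amalgamation alone does not give boundedness.)

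\textbf{Compactness of $\Aut(\CU_h)$.} You claim this is automatic because ``the automorphism group of a compact Hausdorff relational structure is itself compact Hausdorff'' under pointwise convergence. This is false in general: the pointwise limit of a net of automorphisms of a compact Hausdorff structure is a continuous self-map but need not be invertible (the homeomorphism group of the Cantor set is a standard example). Compactness here is \emph{specific} to $\CU$ being the universal e.c. model of an ec-bounded theory: given $(g_i)$ and an ultrafilter $u$, one forms the ultraproduct map $g_*:\U\to\U^*$, uses the retraction $r:\U^*\to\U$ supplied by \propref{prop1}, and observes $r\circ g_*\in\End(\U)=\Aut(\U)$. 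That retraction is where compactness comes from.

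\textbf{Direction of the Lascar map.} You say the action of $\Aut(M)$ on $S(M)$ ``descends canonically to $\CU$ and then to $\CG$.'' The canonical arrow points the other way. Having fixed $j:\CU\to S(M)$ and identified $\Las_\CU$ with $\Las_M$ (\propref{lascar1}), one gets a canonical embedding $j_*:G=\Aut(\CU)\to\mathrm{Sym}(\Las_M)$ whose image is $G_{\Las}$ (\lemref{4.2}(1)). Going from $\Aut(M)$ back to $\Aut(J)$ does require a choice of retraction $r$, via $\si\mapsto r\circ\si|_J$, and this assignment is well-defined only \emph{modulo} $\genl$; that non-canonicity is precisely why the target is $\Aut(J)/\genl$. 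Your final paragraph correctly flags the remaining work — showing $\fg\subseteq\genl$ and that retractions preserve Lascar type, which is \lemref{4.2}(2) and Claim A of \propref{lascar1} — but the first half of your Lascar argument would not assemble into a well-defined homomorphism without this repair.
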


We will call $\CU$ the {\em core} of $T$; though the essential point is that it is a relational structure,
rather than simply a topological one.       The conjugacy class of a tuple in $\CU$ is determined by the atomic type in $\LS$;
such types will be called {\em   pattern types}.  They will  be defined more syntactically below.   
 
 It may happen that an atomic 2-type of $\LS$ restricts to the same 1- type  $\rho$ in each coordinate, but nevertheless
 includes partial definability relations that rule out equality of the two $1$-types over a model.  In this case,
 two copies of $\rho$ must be included in $\CU$.   It is the symmetry between them that the
 group $G=\Aut(\CU)$ expresses.  In particular when $G=1$ (and only then), $\Aut(\CU)=1$, $\CU$ reduces precisely to the fundamental order.  In general the maximal elements of the fundamental order on a given sort can be viewed as the type space of the corresponding sort of   $\CU$.

As an example, consider an antireflexive relation $R(x;y,z)$.  Let $T$ be the model completion
(the only rule is $\neg R(x,y,y)$.)     We consider
type spaces in this single relation, with distinguished variable $x$ (the case of complete types is not really different.)    Then $1$-types $tp(a/M)$ over a model $M$ describes a directed graph  on $M$, 
defined by $R(a,y,z)$.   Here $\CU$ will have four elements, corresponding to the empty graph, the complete graph, and two copies of a "linear ordering" \footnote{(the pattern type represents the axioms of linear orderings, rather than an ordering on any specific set.)}.   Taken individually there is nothing more to say about the linear orderings, but taken as a pair, $\CU$ asserts that one is precisely the opposite ordering of the other.
The evident symmetry of the two orderings is in this case the automorphism group of $\CU$.

Evidently $\CU$ knows  about
Ramsey's theorem.   Ziegler's examples alluded to above yield other examples of finite $\CU$, permuted by other cyclic groups.
 This connects to the work of another large school connecting  model theory with topological
 dynamics,  around the Kechris-Pestov-Todor\v cevi\'c correspondence \cite{KPT}.   
   There
has been relatively limited interaction between them so far; notably \cite{ks} show 
that $\aleph_0$-categorical structures with the Ramsey  property admit a functorial joint embedding property, and in particular have trivial Lascar group.    We return to this   below, 
extending the connection to     arbitrary
first-order theories.   As in \cite{HKP}, a weaker property than Ramsey suffices, 
  namely total definability of $\CU$ for $T$ itself rather than the `second-order' expansion $T^*$ that forms
 the substrate for Ramsey theory.

 \ssec{}
First order logic will be used in this paper at three levels of generality.  The most basic is a complete first order theory $T$.  It will be convenient to Morley-ize it, i.e declare all formulas to be atomic.  This done, $T$ becomes the model completion of the universal part $T_\forall$ of $T$, and thus carries the same information.

More general is the setting where we are given only a universal theory $T$; we will be interested especially in the class of existentially closed models, that may or may not be elementary.    If the class of models of $T$ has the joint embedding property, we will say it is {\em irreducible}. 
(Equivalently, $T$
is the universal theory of some structure; or the universal theory of a class of models with the joint embedding property.) 
 If $Mod(T)$ admits amalgamation, we will say it is a Robinson theory.    
 
 On the other hand given a complete first order theory, we may
Morleyize it by adding names for each definable set;  it becomes a  relational language with   quantifier-elimination.
A theory with quantifier elimination is completely determined by the universal part.

 Thirdly, we will consider {\em primitive universal theories},  described in the next section;  where closure under negation is not assumed.   Our main task is to describe algebraic closure in the positive setting; it goes a little beyond the bounded
closure of \cite{KP} or the (compact) algebraic closure of \cite{byu}.   

The word 'definable', unqualified, will always mean:  definable without parameters.

\ssec{Patterns and definable types}  It is also possible to introduce the core   as a relational structure by means of a direct description of its types  spaces.  
 
Let $ {T}$ be an irreducible universal theory, with a distinguished sort $V$. We also fix a `parameter sort' $P$, and assume $\g$ is a
pattern sort, i.e. set of  formulas $\phi$  on $P \times V$,  including all formulas on $V$ alone.  \footnote{The focus on $(P,V;\gamma)$ allows a more elementary description, but  does not lose any generality, since we allow $V$ to be a projective limit of sorts; we could deal with any family of sorts by taking products.}

  Let $L_V(X)$ be the language $L$ of ${T}$, and augmented with some additional predicates   $X_1,\ldots,X_m$, standing for   subsets of $V$.   We will write $X=(X_1,\ldots,X_m)$.


Assume given a universal theory $\Te$  of $L_V(X)$, restricting to ${T}$ on $L$.   
 A {\em  (maximal)  pattern  type} for this data is a maximal universal theory $p$ for $L_V(X)$,  containing $\Te$.  

The   case  we will be concerned with in practice is the theory $\Te$ of  $\gamma$-externally definable sets relative to $T$;
this will   correspond to a type of an element of the core at $V$.  Assume here that $\gamma=\{\gamma_1,\ldots,\gamma_n \}$, with variable $x$ and parameter sort $V$; and let   $T_\gamma^{ext}$ be the $L_V(X)$-theory, whose models are the structures $(M,A_1,\ldots,A_n)$ such that  there exist $M\leq N \models T$ and $c \in P(N)$ with $A_i =  \{v \in V(M): N \models \gamma_i(c,v) \}$.     

 Let $M \models T_V, A \subset V(M)$.  
 An $L_V$-universal theory $p$ is {\em finitely satisfiable} 
 in $(M,A)$ if for any existential sentence $\psi$ true in $M$,
 there exists a (finite) substructure $M_0$ of $M$ with $M_0 \models \psi$, and such that $(M_0,A) \models p$. 
  
  Equivalently, there exists an elementary extension $(M^*,X^*)$ of $(M,X)$ and an embedding $f:M \to M^*$, such that $(M, f \inv(X^*)) \models p$.
     
 Let us say that two universal sentences $(\forall x)\psi(x),(\forall y)\phi(y)$ of $L(X)$
are {\em incompatible} if along with $\Te$ they jointly imply a universal sentence of $L$, not already in $T$.

If $p$ is a   type in the language of patterns, maximality amounts to this:  
for any quantifier-free  formula $\phi(x_1,\ldots,x_n)$ of $L(X)$,
{\em either} $(\forall x)\phi(x) \in p$, {\em or} some incompatible universal sentence $(\forall y) \psi(y)$ lies in $p$.

\begin{lem} \label{ramsey-basic}  Let $M \models T$, and let  $A \subset V(M)$ be externally $\g$-definable.
Let $p_0$ be a universal theory of $L(X)$, with $(M,A) \models p_0$.
Then   some  $\g$- pattern type $p \supseteq p_0$ is finitely satisfiable 
 in $(M,A)$. 
  \end{lem}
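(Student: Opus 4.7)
The plan is to build $p$ by Zorn's Lemma on dense universal theories extending $p_0$, and then show that the resulting maximal-dense element is automatically maximal as a universal theory (hence a $\gamma$-pattern type). Since $A$ is externally $\gamma$-definable we have $(M,A) \models T_\gamma^{ext}$, so I may first replace $p_0$ by $p_0 \cup T_\gamma^{ext}$, which is still a universal theory satisfied by $(M,A)$. Throughout I work with the second, equivalent formulation of density stated in the excerpt: $q$ is dense in $(M,A)$ iff there exist $(M^*, X^*) \succeq (M,A)$ in $L(X)$ and an $L$-embedding $f \colon M \to M^*$ with $(M, f^{-1}(X^*)) \models q$.

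For the Zorn step, given a chain $\{q_\alpha\}$ of dense extensions of $p_0$, I set $q := \bigcup q_\alpha$ and consider the $L(X)$-theory with constants whose models witness the elementary-extension formulation for $q$: the elementary diagram of $(M,A)$, the $L$-diagram of $M$ on a fresh copy $\{d_a\}_{a \in M}$, and the universal axioms of $q$ applied to the $d_a$. Any finite fragment lies inside the analogous theory for some $q_\alpha$, which is consistent by hypothesis; compactness then produces a single witness for $q$, so $q$ is dense. Zorn yields a maximal dense $p^* \supseteq p_0$.

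The substantive step is promoting ``maximal-dense'' to ``maximal universal theory''. Suppose $\forall \bar{x}\,\theta_0 \notin p^*$. By maximality of $p^*$ among dense theories, $p^* \cup \{\forall \bar{x}\,\theta_0\}$ is not dense, so compactness applied to its witness theory yields a finite $\Sigma \subseteq p^*$ and a finite tuple $\bar{m} \in M$ such that no choice of $X$-truth-values on the coordinates of $\bar{m}$ makes all axioms of $\Sigma \cup \{\forall \bar{x}\,\theta_0\}$ hold on subtuples of $\bar{m}$. Because only finitely many such choices exist, substituting each into the $X$-atoms turns this non-existence into a single quantifier-free $L$-formula $\phi_\Sigma(\bar{y})$ satisfied by $\bar{m}$ in $M$. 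Hence $\sigma := \forall \bar{y}\,\neg\phi_\Sigma(\bar{y})$ is a universal $L$-sentence failing in $M$, so $\sigma \notin T$; yet any $L(X)$-model of $\Sigma \cup \{\forall \bar{x}\,\theta_0\}$ furnishes its own interpretation of $X$ and therefore satisfies $\sigma$. Setting $\psi := \bigwedge \Sigma \in p^*$, this is exactly the incompatibility of $\forall \bar{y}\,\psi$ with $\forall \bar{x}\,\theta_0$ required by the paper's maximality criterion for a pattern type.

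The hard part is this last translation, bridging ``maximal-dense in $(M,A)$'' and ``maximal as a universal theory''. Everything else is a routine compactness-and-Zorn argument; the crucial observation is that $X$-assignments to a \emph{finite} tuple form a finite set, so non-realizability over some tuple of $M$ becomes a quantifier-free $L$-condition, whose universal closure delivers the $L$-sentence outside $T$ that is needed to witness incompatibility.
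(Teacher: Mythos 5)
Your overall strategy — Zorn's lemma to get a maximal dense extension, then promoting ``maximal-dense'' to ``maximal pattern type'' by extracting an incompatibility from non-density — is exactly the paper's. The Zorn step and the final observation that only finitely many Boolean $X$-assignments to a finite tuple exist, so that non-realizability collapses to a quantifier-free $L$-condition whose universal closure lies outside $T$, are both correct and match the paper's argument (where the analogous $L$-condition is $\neg\theta$).

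There is, however, a genuine gap in the middle step. You assert that compactness applied to the witness theory yields a tuple $\bar m$ and finite $\Sigma \subseteq p^*$ such that \emph{no} choice of $X$-truth-values on $\bar m$ satisfies $\Sigma \cup \{\forall\bar x\,\theta_0\}$. But the witness theory contains the elementary diagram of $(M,A)$, and an inconsistent finite fragment $\Delta_1 \cup \Delta_2(\bar d) \cup \Sigma(\bar d)$ only rules out those $X$-assignments that arise as $A\cap\bar v$ for some $\bar v\in M$ of the same $L$-type as $\bar m$ — equivalently, those compatible with the $\Delta_1$-constraints. Non-realizable assignments (those never achieved by $A$ on any tuple of the relevant $L$-type) need not be excluded, so $\phi_\Sigma(\bar m)$ as you define it may simply be false in $M$, and then $\sigma := \forall\bar y\,\neg\phi_\Sigma$ need not fail in $M$. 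The paper's own phrasing (``$\neg\phi$ is realized in $(M_0,A)$'' for \emph{all} $\bar a$ with $\theta(\bar a)$) is itself somewhat informal — strictly one only gets that \emph{some} member of the finite fragment fails — but it stays with the $A$-assignment rather than quantifying over all assignments, so it does not run into this particular problem. The repair in your setup is to observe first that $\text{Th}_\forall(M,A) \subseteq p^*$ (adding it preserves density, so maximality forces the inclusion), and then to enlarge $\Sigma$ by the finitely many universal $L(X)$-sentences $\forall\bar v\,(\Delta_2(\bar v) \to X(\bar v)\neq B)$, one for each non-realizable Boolean pattern $B$ on $\bar m$; with that enlargement every non-realizable $B$ already violates $\Sigma$, $\phi_\Sigma(\bar m)$ does hold in $M$, and your conclusion goes through.
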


\prf   By Zorn's lemma, there exists a   universal theory $p \supseteq p_0$ of $L(X)$ that is finitely satisfiable 
 in $A$, and is maximal with this property.  
 We have to show that $p$ is a 
 pattern type.   Let $\phi(x_1,\ldots,x_k)$ be a quantifier-free formula of $L(X)$.  
 Then $p \union \{(\forall x)\phi\}$ is either equal to $p$, or is no longer finitely satisfiable 
  in $(M,A)$. 
 In the latter case, by definition, there exists an   $L$-formula $\theta(x_1,\ldots,x_m)$ consistent with $T$,
 such that if $M \models \theta(a_1,\ldots,a_m)$ and $M_0 = \{a_1,\ldots,a_m\}$ then  $\neg \phi$ is realized in $(M_0,A)$.    Let  
\[ \psi(x_1,\ldots,x_m) =\theta(x_1,\ldots,x_m) \to  \bigvee_{y_1,\ldots,y_k} \neg \phi(y_1,\ldots,y_k) \]
where $y_1,\ldots,y_k$ range over $k$-tuples from among  $x_1,\ldots,x_m$.   Then $p \models \psi $,
and $(\forall x)\psi$, $(\forall x)\phi$ are incompatible.   \eprf

A {\em definable} pattern type is  one that simply asserts that $X$ coincides with some $0$-definable set of $T$ (by a qf formula without parameters).   Maximality is then clear.
We say $T$ is {\em has definable patterns} (for a given pattern sort $\g$) if  every   maximal pattern type (in the sort $\g$) is definable.

  \begin{thm} \label{expand} Let ${T}$ be an irreducible universal theory.
There exists a unique minimal expansion of ${T}$ to a universal theory $T\td$ 
that   has definable patterns.  
The self-interpretations of $T\td$ over $T$ form a group, isomorphic to $\Aut(\CU)$.
\end{thm}

If $T$ is stable, then $T\td$ simply names the imaginary algebraic constants, and so amounts to  `working over $\acl^{eq}(\emptyset)$' in the
sense of Shelah.  If $T$ is NIP, then $T\td$ is NIP.


The proof, and precise definition of minimality and uniqueness, will be given at the end of \secref{section-rel} (\propref{expandd}.)

\ssec{Elementary   Ramsey theory}

Structural Ramsey theory is usually defined in terms of   isomorphism types of substructures, or complete types.   See \cite{glr}, \cite{nr}, \cite{ehn}  and references there. \footnote{Krzysztof Krupi\'nski informed me that
with Anand Pillay, he developed an extension of \cite{KPT} to such a setting.}  
  However it is also very natural
  in a   first-order   setting,   using formulas in place of complete types.     
  This extends to continuous logic, and brings out the unity in  instances of structural Ramsey theory such as   
 for affine spaces over a finite field, Dvoretzky-Milman for Hilbert spaces, and Van den Waerden for arithmetic progressions.     
   
 In Ramsey's theorem, one is given a set $M$.  We then consider $D=M^n$ or $D=M^{[n]}$,
 and an {\em arbitrary} subset $A$ of $D$.   The desired  outcome is  a `large' subset $M_0$ of $M$,
 such that $A$, restricted to $D(M_0)$, has a simple, explicitly described structure.   
 
 In the   structural generalization, $M \models T$ is a structure.   $D$ is a  sort (or a {}definable set of $M^n$).  Again $A$  
 is an arbitrary subset of $D(M)$.  We seek a large $M_0$ such that $A$ has as regular  a structure as possible on $D(M_0)$.   Here `large' means:  a finite substructure of $M$ realizing a prescribed existential sentence of  $T$.
 Equivalently, $M_0$ can be taken to be a copy of $M$ in some ultrapower $(M^*,A^*)$ of $(M,A)$.

While Ramsey theory appears to be second-order, considering arbitrary colorings on $D$,
 a simple device does present it as a special case of the theory of patterns:  we simply introduce a new 
 sort $P$ and a relation that allows $P$ to parameterize colorings on $D$, with no constraints.  A   pattern type associated
 to the new theory will be referred to as a free pattern type for the original one.   Thus a free  pattern type is simply a  maximal universal theory  for $L_V(X)$,  whose restriction to $L$ is ${T}$.

 Now \lemref{ramsey-basic} tells us immediately what 'as regular as possible' can mean.  We cannot do better
 than a free pattern type, and some free pattern type can always be achieved.  Thus the basic structural Ramsey question  for a theory   $T$ changes from a yes/no question to a more qualitative and functorial one:  describe the free pattern types of $T$.   The simplest case is that every free pattern type is {\em definable}; in this case we will call the theory {\em Ramsey}.    We will see that 
 every universal theory has a canonical Ramsey expansion, whose automorphism group is an interesting invariant of $T$.

   Expand the language by an  an additional sorts $P$  
 and  binary relation $R \subset P  \times V$. 
   However we keep the same universal theory 
 $T_\forall$, adding no axioms on $R$; we denote it $T^*_V$.   Clearly $T^*_V$ is an irreducible universal theory.
    We define  $\co\ram(T): = \CU^*:=\co_R(T^*_V)$, i.e. the parameter sorts of the core of the derived theory $T^*_R$.
 We also write $\LS\ram(T)$ for the language of $\co\ram(T)$.

        Let $\g=\g\ram(T)$ be generated by finitely many formulas 
  $R(x_i,y)$ and arbitrary formulas of $L$.  A 
   $\g$-pattern type
    for $T^*$   will
    be called a {\em free pattern type} for $T$ at $V$.

\begin{defn} \label{ramsey-def}
 We say that a  theory $T$ is a {\em Ramsey theory}  at  $V$ (or has the Ramsey property at $V$)   if   all free  pattern types for $T$ on $V$ are  definable.     It is {\em everywhere Ramsey}   if it is Ramsey at   $V$
for  all  sorts of $T$.
 
 \end{defn}
  
In view of \lemref{ramsey-basic}, an equivalent form: 

$T$ is Ramsey at $V$ iff  for every $M \models T$ and every $A \subset V(M)$, there exists a formula  $\theta(x)$ such that 
for  every formula $\phi(x_1,\ldots,x_n)$ consistent with $T$,   there exist
$a_1,\ldots,a_n \in M$ with 
 $M \models \phi(a_1,\ldots,a_n)$, and   \[ M \models \theta(a_i) \iff a_i \in A \] 
 Equivalently, some elementary extension $(M*,A*)$ of $(M,A)$ contains a copy $M'$ of $M$ such that
 $A* \meet M'$ is a $0$-definable subset of $M'$.

 \begin{rem}   Assume $T$ is $\aleph_0$-categorical, with quantifier-elimination in a relational language, and with a single sort $V$; let $M \models T$.
 The above definition  relates to the terminology of   \cite{KPT}, \cite{ehn} in the following way.    
Let ${\mathcal A}$ be the class of finite models of $T_\forall$.
  For any $A \in \mathcal{A}$, we have an imaginary sort $V^A$ coding embeddings of $A$ into $V$;
so $V^A(M)$ can be canonically identified with  $\Hom(A,V(M))$.  We also have $V^{[A]} = V^A/Aut(A)$, the set of substructures of $V$ isomorphic to $A$. 
Now $T$ (or rather the class of finite models of $T$) has the Ramsey property
in the sense of \cite{KPT}, \cite{ehn} iff $T$ has the Ramsey property at $V^{[A]}$ for all $A \in \mathcal{A}$.  Also by the KPT correspondence,
$\Aut(M)$ is extremely amenable iff   $T$ has the Ramsey property at $V^A$ for all $A$ (or equivalently at $V^n$ for all $n$.)     Note that $V^A, V^{[A]}$ are complete types; so definability of the coloring amounts to constancy.
  \end{rem}

\begin{thm} \label{expand2} Let $T$ be a complete theory.  
There exists a unique minimal everywhere Ramsey expansion $T\ram$.

The self-interpretations of $T\ram$ over $T$ form a group, $G\ram(T)$.
  \end{thm}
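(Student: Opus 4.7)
My plan is to deduce \thmref{expand2} from \thmref{expand} by applying the latter to the auxiliary theories $T^*_V$, and then transferring the resulting totally-definable structure back to $T$. The key observation, already implicit in the remark following \defref{ramsey-def}, is that $T$ is Ramsey at $V$ iff every maximal $R$-pattern type of $T^*_V$ on the $\g\ram$-generated sorts is definable — i.e.\ iff $T^*_V$ is totally definable in the relevant sense. So a Ramsey expansion of $T$ at $V$ should arise by totally-definably expanding $T^*_V$ and extracting the fragment lying over $T$.

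First, for each sort $V$ of $T$, apply \thmref{expand} to the irreducible universal theory $T^*_V$ to obtain its minimal totally definable expansion $(T^*_V)\td$. By construction, every maximal free pattern type at $V$ of $T$ — equivalently, every maximal $R$-pattern type of $T^*_V$ at $V$ — is named by a new $0$-definable predicate on $V$ in $(T^*_V)\td$. Let $T^{(V)}$ denote the expansion of $T$ obtained by adjoining all new predicates of $(T^*_V)\td$ whose sorts lie among the sorts of $T$, as fresh atomic symbols. Then $T^{(V)}$ is Ramsey at $V$: every free $V$-pattern of $T$ has an explicit defining formula among the new symbols.

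To make the expansion everywhere Ramsey, iterate: set $T_0 = T$, let $T_{\alpha+1} = \bigcup_V (T_\alpha)^{(V)}$ where $V$ ranges over the sorts of $T_\alpha$, and take unions at limit stages. Each step adjoins only $0$-definable predicates, and by \thmref{intro1} the core $\co(T^*_V)$ has cardinality at most $\lambda_T$, bounding the number of new predicates introduced at each sort in any single iteration; so the construction stabilizes at some bounded ordinal. Let $T\ram$ denote the stable limit; it is everywhere Ramsey.

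For minimality and uniqueness, suppose $T'$ is any everywhere Ramsey expansion of $T$. Then $(T')^*_V$ is totally definable in the $R$-sorts for every $V$, so by the universal property in \thmref{expand}, $(T^*_V)\td$ maps canonically over $T^*_V$ into $(T')^*_V$, inducing a canonical inclusion $T^{(V)} \hookrightarrow T'$; iterating, $T\ram$ embeds canonically into $T'$, proving minimality up to canonical equivalence. The self-interpretations of $T\ram$ over $T$ then form a monoid under composition; for any such $\iota$, its image $\iota(T\ram)$ is itself an everywhere Ramsey expansion of $T$ contained in $T\ram$, so by minimality and completeness $\iota$ is surjective, hence bijective on atomic types, hence invertible. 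The monoid is therefore a group, which we denote $G\ram(T)$. The main obstacle in executing this plan is the bookkeeping for the iteration: one needs to verify that adjoining names for free $V$-patterns does not generate genuinely new free patterns at other sorts that cannot eventually be absorbed. This is handled by the cardinality bound in \thmref{intro1}, combined with the observation that once a family of $0$-definable predicates has been named, any pattern type explicitly definable in terms of them needs no further naming.
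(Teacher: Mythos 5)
Your strategy --- obtaining $T\ram$ from the totally definable expansion of the auxiliary theory --- matches the paper's, but the transfinite iteration introduces genuine gaps. First, your claim that $T^{(V)}$ is Ramsey at $V$ only accounts for free $V$-patterns of the original $T$; Ramseyness for $T^{(V)}$ requires definability of free $V$-patterns formulated in the richer language $L^{(V)}$, and no argument for this is given. Second, the stabilization of the iteration is not established: the cardinality bound of \thmref{intro1} concerns $\co(T)$, but at stage $\alpha$ the relevant object is the core taken relative to $T_\alpha$, whose size is governed by the growing language $L_\alpha$; so there is no a priori ordinal at which the process halts, and the parenthetical observation at the end of your proof does not supply the needed argument.

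The paper avoids both problems and performs the construction in a single step, with no iteration. Take $V$ to be the whole family of sorts simultaneously, form $T^*_V$, apply \propref{expandd} to obtain $\widetilde{T}=(T^*_V)\td$, and restrict to the original sorts to get $T\ram_\forall$. The observation your proof is missing is that $\widetilde{T} = (T\ram_\forall)^*$: the predicates produced by the totally definable expansion live only on the parameter sorts --- i.e.\ the sorts of $T$ --- so the axioms of $\widetilde{T}$ are already visible in $T\ram_\forall$, and re-forming the $*$ construction recovers $\widetilde{T}$ exactly. Since \propref{expandd} in fact verifies total definability at the enlarged family $\g'$ of formulas including the new predicate symbols, $T\ram_\forall$ is everywhere Ramsey immediately; this same observation, applied to a single sort, is exactly what is needed to justify your unproven claim about $T^{(V)}$. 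Finally, for the group statement the paper's argument is more robust: self-interpretations of $T\ram$ over $T$ correspond to endomorphisms of the core, which are automorphisms by \propref{prop1}; your step asserting that $\iota(T\ram)$ is everywhere Ramsey and then invoking minimality is not justified as stated.
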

  
  See \propref{expand2c} for a precise definition of minimality and uniqueness.
  
In case $T$ is the theory of pure equality, 
$T\ram$ will be the theory of dense linear orderings
(up to a   strong bi-interpretability).      In general, 
   $T\ram$  is is a complete first order theory in a bigger language, whose additional relations
 are indexed by the elements of the dual sorts of the theory $T^*$.   The proof and various examples 
 are in \secref{ert-sec}.

     In Appendix B, we will show
 also that when $T$ has countable language and a prime model $M$ ,  
 $G=\Aut(M)$, the space of expansions of $M$ to $T\ram$ is the universal minimal flow $U$ of $G$,
 i.e. it has no closed $G$-invariant subspaces, and admits a continuous $G$-invariant map into any other 
 compact space with this property.   We can also write:  $\Hom(J^*, S(M)) = U$.  
 Assuming $L$ is countable,  $J$ can have cardinality continuum, but the statement nonetheless has a taming effect on 
 $U$;   whereas a priori we know only that $|U| \leq \beth_2$, $J$  is parameterized by 
a compact structure of cardinality continuum, uniquely determined by its own theory, which in turn is  closely controlled by $T$.

 \thmref{expand2}, and the result of the Appendix,  generalize a line of theorems, beginning with   the theory of pure equality by Glasner and Weiss; then for a wide class of $\aleph_0$-categorical theories by \cite{KPT}, explaining the  connection to structural Ramsey;
the restriction to  $\aleph_0$-categorical theories, or Fraiss\'e classes, was due to the approach using 
topological dynamics of the automorphism group $G$.   The result was extended to  all 
 $\aleph_0$-categorical $T$  provided   the universal minimal flow of  $G$ is metrizable, 
in   \cite{mnt},\cite{zucker}, \cite{bmt}.   In these cases, $T\ram$ has  locally finite  language relative to $T$ (see \remref{bmt/z}.)    \cite{ehn}  showed  that the hypothesis is not always valid.

  \ssec{The Ellis group}  
In Appendix \ref{ellis-sec} we consider  a   type-definability analogue $\CUb$ of $\CU$.   It relates to  the notion of {\em content} of \cite{kns},  as   $\LS$ relates to the   fundamental order  of \cite{lp}. 
  We show that $\Aut(\CUb)$ is isomorphic to the  Newelski-Ellis group.  This   presents the Ellis group 
as the automorphism group of a natural structure, and  leads immediately to a bound of $\beth_3$,   improving the bound $\beth_5$ of \cite{kns}.  We show by example that $\beth_3$ is in fact optimal for the Ellis group.

\ssec{Open problems}
Many directions are left open; here are a few.  
\begin{enumerate}

\item   Develop a { relative} theory, with $\co(T_a)$   parameterized over a definable set.   
\item Develop the Galois correspondence.  See \ref{beth}, but also:
\item  There exists a 1-1 correspondence between a family of subgroups
of  $\Aut(\CU)$, and a family of reducts of $T\ram$ containing $T$; describe the closed subgroups and the closed expansions of $T$
\item Under what circumstances, apart from \propref{ramseyqe},  does the  definable-pattern expansion $T\td$ have a model completion?   
\item  Connection with NIP and with honest definitions.   Characterize the 
pattern types. Show that $T_{def}$ is the universal part of a complete theory with quantifier-elimination, if $T$ is NIP.
\item   Let  $D$ be strictly minimal.   Is the canonical Ramsey expansion at $D$,  a NIP theory?  
\item Develop $\co(T)$ for a primitive universal theory $T$.
\item  Explore further the duality between the parameter and variable sorts. 
  \item Develop  the continuous logic generalization of \cite{H-cigha}, i.e. generalized finite imaginaries
to generalized compact Hausdorff imaginaries.  Does the   Hausdorff part of $\CU$
comprise the entire generalized algebraic closure in this sense?  
\item The same  
for positive logic.  Is $\co(T)$  the entire absolute algebraic closure?  
  \item  The definable group analog; a more concrete description of canonical compact covers of
$G/G^{000}$ for a definable or ind-definable group $G$ should become accessible.
This may shed light on the Massicot-Wagner problem of describing general approximate subgroups. 
\item Investigate the degree of effectiveness of $\co(T)$ or rather of the pattern type spaces that determine it.    
(See \remref{complexity}.)
 
 \item      We have not ruled out that $G=\Aut(\CU)$ is outright Hausdorff, i.e. $\fgt=1$;  but see 
  Example \ref{topdyn}.     Nov. 2021 update: this is now shown in Example \ref{Gactions-e}.  For any  discrete group $\G$ we let $T_\G$ be the theory
  of free $\G$-actions.    We show that the  universal minimal flow of any discrete group $\G$ is dual to $\CU(T_G)$, and use this to give 
  an example of a  pp type with non-Hausdorff automorphism group.   Many interesting questions on this connection with topological dynamics remain; see Question   \ref{dynamics}.

\end{enumerate}

 \bigskip

The constructions in the body of the paper are  entirely self-contained;   beyond some elementary lemmas on Hausdorff quotients (Appendix C), no topological dynamics is used.  
Only in the appendices,    where we describe the universal minimal flow and the Ellis group of the type space flow in our terms, 
do we assume knowledge of the definitions of these objects.

I am grateful to  Todor Tsankov and to David Evans for very enlightening conversations on the KPT correspondence;
and to Arturo Rodriguez Fanlo,   Pierre Simon and the Jerusalem group (Christian d'Elb\'ee,  Itay Kaplan, Yatir Halevi,  Tingxiang Zou, Eugenio Colla, Ori Segel)
for their reading and comments on the text.

\end{section}
 
\begin{section}{Existentially closed models}
 
\def\U{\mathcal{U}}

 Following work of Shelah, Pillay, and Ben-Yaacov \cite{lazy}, \cite{ec},\cite{cats}, the  setting of existentially closed structures of universal theories,  and of positive logic, has come to be viewed as a natural and mild generalization of the usual first order context.  
 In particular basic stablity and sometimes simplicity was thus generalized by these three authors.  
 For the more basic theory of saturated models
 this was carried out earlier by Mycielski, Ryll-Nardzewski and Taylor \cite{mycielski}, \cite{mrn},  \cite{wtaylor}, soon after the work in the first-order case by J\'{o}nsson, Keisler, and Morley-Vaught (see \cite{morley-vaught}).
  In this section we give a self-contained treatment of the facts we need, in current terminology.
 
 Let $\CL$ be a (many-sorted) language.  \footnote{We may or may not wish to 
 allow a logical equality symbol in general; but we will use it in the definition of an existentially closed structure.
 In practice, \remref{2.2} (2) will allow us to restrict attention to formulas constructed
 without the equality sign, in particular in the definition of the pp topology on an e.c. model.}   A  {\em positive primitive} (pp) formula is one of the form $(\exists x_1,\ldots, x_k)\bigwedge_{j=1}^l \phi_j(x)$, with $\phi_j$ atomic.   We regard   pp formulas as the fundamental ones for $\CL$,
  though occasionally we will consider slightly higher ones.
A theory axiomatized by negations of {pp} sentences will be called
{\em primitive-universal}.   The set of such sentences true in a structure $M$ is denoted $Th_{p\forall}(M)$.
By \lemref{u}, for existentially closed models, the full universal
theory (in the usual sense) is completely determined by the primitive universal theory.

A   {primitive}  universal theory $\CT$ of $\CL$  is called {\em irreducible} if it is the   {primitive}  universal   theory of some model.  Thus if $\CT \models \alpha \vee \beta$, with $\alpha,\beta$ primitive universal, then  $\CT \models \alpha$ or $\CT \models  \beta$.  Equivalently, any two models of $\CT$ admit  homomorphisms into some third
model of $\CT$. Note that since $\alpha,\beta$ are primitive universal, this can be a weaker condition than irreducibility as a universal theory.

We will consider irreducible   theories  $\CT$, and will be  interested only in such models.    In other words we are really concerned with $\CT^{\pm}:= \CT \union \{\psi:  \CT \not \models \neg \psi \}$ where $\psi$ ranges over {pp} sentences.

\begin{defn}  A model $A$  of $\CT$ is {\em existentially closed} (abbreviated e.c.) if for every homomorphism $f: A \to B$,
where $B \models \CT$, and 
any $\CL_A$-{pp} sentence $\phi$ allowing equality, if $B_A \models \phi$ then $A_A \models \phi$.  Here $\CL_A$ is $\CL$
expanded by constants for the elements $a \in A$; they are interpreted as $a$ in $A_A$ and as $f(a)$
in $B_A$.  \footnote{If we begin with a logic without equality, allowing equality in $\phi$ has the effect of considering only  e.c. models where two elements with the same atomic type over the entire model are equal;   this is needed for a reasonable definition of the cardinality
of the model.  
Any model can   of course be collapsed to one with this (`Barcan') property, losing no meaningful information.} \end{defn}

The usual direct limit construction shows that any model $A$ of $\CT$ admits a homomorphism $f: A \to B$ into an existentially closed model $B$ of $\CT$, with $|B| \leq |\CL_A| + \aleph_0$.   Any existentially closed model of $\CT$ is a model of $\CT^{\pm}$.

\begin{rem}  \label{2.2} \begin{enumerate} \item  Any  homomorphism from  an e.c. model of $\CT$ to a model of $\CT$ must be injective,
and indeed an embedding, i.e. an isomorphism onto the image.  
\item  Let $E$ be a conjunction 
of {pp}-formulas.  Assume it is a strong congruence:  in any model $A$ of $\CT$, and for any  non-logical
symbol $R(x,x_1,\ldots,x_n)$ in the language, if $a,b \in A$ and $A \models E(a,b) \wedge \phi(a,a_1,\cdots,a_n)$ then $A \models \phi(b,a_1,\cdots,a_n)$.   Then $E$ coincides with equality in any existentially closed model of $\CT$.
(Such an $E$, a conjunction of basic formulas in fact, will exist in the theories of interest to us.
 and will save us the need to consider separately atomic formulas built from the equality symbol.)
\end{enumerate}
\end{rem}

 Let $\CT$ be a primitive-universal  theory.   For two pp formulas $\phi(x),\psi(x)$, write $\phi \perp \psi$
 if $\CT \models (\neg \exists x)(\phi \wedge \psi)$.  Part (1) of the following syntactical lemma is the substitute for the law of excluded middle.   Part  (2)   refers briefly to possibly infinitary sentences,  beyond the pp level.
 It follows from (2) that any two e.c. models of $\CT$ share the same universal theory, and the same universal quantifications of Boolean combinations of pp formulas.

 \begin{lem}  \label{u}  Let $\CT$ be the {primitive} universal  theory of $M$, and let $E$ be an e.c. model of $\CT$.
 Let $\phi, A_i (i \in I), B_j (j \in J)$ be {pp} formulas of $\LS$.   Assume  $I,J$ are finite, or more generally that any set of cardinality $|I \union J|$ of pp formulas that is finitely satisfiable  in $M$ is satisfiable in $M$.    
   Then: \begin{enumerate}
 \item   If $a \in E^x$,   then either $E \models \phi(a)$ or $E \models \phi'(a)$
 for some $\phi' \perp \phi$.  
 \item    Let $\psi$ be the (possibly infinitary) sentence:  \[(\forall x)(\bigwedge_{i \in I} A_i\to \bigvee_{j \in J} B_j(x)) \] 
 If $M \models \psi$, then $E \models \psi$.
 \item If $M$ is $|E|^+$-pp-saturated, and $M \models (\forall x) \theta$ where $\theta$ is any
 (possibly infinitary) Boolean combination of pp formulas, then $E \models (\forall x)\theta$.
 \item  If $\theta$ is any finite Boolean combination of pp formulas and $M \models (\forall x) \theta$,
 then $E \models (\forall x) \theta$.
%
 \end{enumerate}  
   \end{lem}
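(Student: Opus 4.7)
The plan is to prove (1) by a compactness argument using existential closure of $E$, to derive (2) from (1) by transferring pp information between $E$ and $M$, and to reduce (3) and (4) to (2) via conjunctive normal form.

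For (1), assume $E \not\models \phi(a)$ and consider $\Sigma := \{\phi(a)\} \cup \mathrm{Diag}_{pp}(E)$, where $\mathrm{Diag}_{pp}(E)$ collects all pp formulas with parameters in $E$ that hold in $E$. A model of $\Sigma \cup \CT$ is a $\CT$-model $N$ equipped with a homomorphism $f : E \to N$ satisfying $N \models \phi(f(a))$, and then existential closure of $E$ would force $E \models \phi(a)$, contradicting the assumption. Hence $\Sigma \cup \CT$ is inconsistent, and by compactness some pp conjunction $\sigma(a, \bar b)$ with $\bar b$ in $E$ satisfies $\CT \vdash \neg(\exists x \bar y)(\phi(x) \wedge \sigma(x, \bar y))$. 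The delicate point is that the perpendicular formula must emerge \emph{without} parameters from $E$; setting $\phi'(x) := (\exists \bar y)\sigma(x, \bar y)$ works because $\phi$ does not mention $\bar y$, so the negated existential pulls cleanly through. The resulting $\phi'$ is pp, perpendicular to $\phi$, and satisfied by $a$ in $E$.

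For (2), suppose some $a \in E$ witnesses $E \models \bigwedge_{i \in I} A_i(a)$ and $E \not\models B_j(a)$ for every $j \in J$. By (1), pick pp $B_j' \perp B_j$ with $E \models B_j'(a)$ for each $j$. The set of pp formulas $\Sigma(x) := \{A_i\}_{i \in I} \cup \{B_j'\}_{j \in J}$ has cardinality $|I \cup J|$ and is realized by $a$ in $E$; each finite subconjunction is a pp sentence true in $E$, hence true in $M$ as well (otherwise its negation would lie in $\CT = \mathrm{Th}_{p\forall}(M)$ and force $E$ to omit it also). By the cardinality hypothesis, $\Sigma$ is realized in $M$ by some $a^*$. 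Then $\psi$ applied at $a^*$ yields $M \models B_j(a^*)$ for some $j \in J$, while simultaneously $M \models B_j'(a^*)$, contradicting $B_j \perp B_j'$ (which, as a theorem of $\CT$, holds in $M$).

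For (3) and (4), I would put $\theta$ in conjunctive normal form as a conjunction of clauses each equivalent to an implication $\bigwedge_{i \in I_\alpha} A_i \to \bigvee_{j \in J_\alpha} B_j$, and apply (2) clause by clause. In (4) each $I_\alpha, J_\alpha$ is finite so (2) applies automatically. In (3) the clauses may be infinitary, but $|E|^+$-pp-saturation of $M$ supplies exactly the cardinality hypothesis (2) requires: any set of pp formulas of cardinality at most $|E|$ finitely satisfiable in $M$ is satisfiable in $M$. Once each clause $\chi_\alpha$ is verified in $E$, the conjunction $(\forall x)\theta$ follows. The only real obstacle throughout is the parameter-removal step in (1); the rest is routine bookkeeping, leveraging the fact that $E \models \mathrm{Th}_{p\forall}(M)$ to move finite pp information in both directions between $E$ and $M$.
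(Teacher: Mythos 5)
Parts (1) and (2) match the paper's proof, with cosmetic differences in (1): you work with the pp-diagram of $E$ plus $\phi(a)$ directly, while the paper works with the atomic diagram of $E$ together with a Henkin witness for the existential quantifier inside $\phi$; the parameter-removal step (existentially quantifying out the constants $\bar b$ from $E$, which is harmless because $\phi$ does not mention them) is the same observation in both. Your (4) is in fact a genuine simplification: the paper's proof of (4) routes through an $|E|^+$-saturated elementary extension $M^*$ and then invokes (3), whereas you apply (2) directly to the finitely many finite clauses of a CNF, using nothing beyond $E$ being e.c.

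The gap is in (3). You reduce $(\forall x)\theta$ to $(\forall x)\chi_\alpha$ for each CNF clause $\chi_\alpha = \bigwedge_{i \in I_\alpha} A_i \to \bigvee_{j \in J_\alpha} B_j$ and assert that $|E|^+$-pp-saturation of $M$ supplies the cardinality hypothesis of (2) because the clauses have ``cardinality at most $|E|$.'' But nothing in the hypotheses bounds $|I_\alpha \cup J_\alpha|$ by $|E|$: clause size is governed by how many pp formulas appear in $\theta$, a quantity that is a priori unrelated to $|E|$ (and the passage to an infinitary CNF can itself enlarge things, since distributing a disjunction over an infinite conjunction multiplies the index set). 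So the hypothesis you feed into (2) is not justified. The paper's proof of (3) sidesteps the issue and leans on (1) rather than (2): $|E|^+$-pp-saturation yields an embedding $h\colon E \to M$; since $E$ is e.c., part (1) gives, for every pp $\phi$ and $a \in E$, that $E \models \phi(a)$ iff $M \models \phi(h(a))$ (forward because $h$ is a homomorphism; backward because $E \not\models \phi(a)$ forces $E \models \phi'(a)$ for some $\phi' \perp \phi$, and $\phi'$ transfers forward along $h$). Thus arbitrary Boolean combinations of pp formulas are preserved pointwise, with no cardinality or normal-form bookkeeping, and the universal sentence descends along $h$. Replace your CNF reduction in (3) by this embedding argument.
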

 \prf     
 (1)   Say $\phi(x) = (\exists y)\psi(x,y)$, with $\psi$ atomic.  Let  $\Delta$ be the atomic diagram of $E$,
\footnote{the diagram consists of atomic sentences of $L_E$ true in $E$.} 
 and $\Delta' = \Delta \union \{\psi(x,c) \} \union \CT$.  If $\phi(a)$ fails, then $\Delta'$ is
 inconsistent since $E$ is e.c..  So some finite conjunction $\psi'(a,d)$ is inconsistent with
 $\{\psi(x,c) \} \union \CT$.  Let $\phi'(x) = (\exists u)\psi'(x,u)$.   It follows that  $\phi \perp \phi'$
 and $E \models \phi'(a)$.   \\
 
 (2)  We prove the contrapositive.    Suppose  that  there exists $c$ with $E \models A_i(c)$ for each $i$, but $E \models \neg B_j(c)$ for each $j$.    Since $E$ is e.c. and $B_j(c)$ fails, there must exist a {pp} formula $B_j' \perp B_j$ such that $E \models B_j'(c)$.  
Then for any finite $I_0 \subset I, J_0 \subset J$,  $\{A_i(x), B_j'(x): i \in I_0, j \in J_0\} $ is 
satisfiable in $E$, so $(\neg \exists x) \bigvee_{i \in I_0} A_i \vee \bigvee_{j \in J_0} B'_j$ is {\em not}
a sentence of $\CT$.  Hence  $\{A_i: i \in I \} \union \{B'_j: j \in J\}$ is finitely satisfiable and hence satisfiable
in $M$.  But this  means the implication does not hold in $M$.

(3)  Under the assumptions, there exists an embedding $h:E \to M$; pp formulas and their negations are preserved by $h$ since $E$ is e.c.; hence arbitrary Boolean combinations are preserved; and universal quantifiers descend to substructures, as usual.

(4) Let $M^*$ be an $|E|^+$-saturated extension of $M$.  Then the pp-theory of $M^*$ is $\CT$ and $M^* \models (\forall x) \theta$; so (3) applies, and $E \models (\forall x) \theta$.
 
 \eprf
 In particular  if a pp formula $R$ has   $k$ distinct solutions in some e.c. $M \models \CT^{\pm}$, then $R$ has $k$ distinct solutions
 in any $M \models \CT^{\pm}$.  Thus  the number of solutions of $R$  is any e.c. model (viewed as a finite number or $\infty$) is the same.   
 

\ssec{Morleyzation} \label{morleyzation}   

It is sometimes desirable to modify the language by a definitional expansion, so that every 
pp formula becomes atomic.\footnote{In the present paper, this section is used only in \secref{quotients} and in the footnote of Appendix A,
but not in any of the main results.}    This can be done without changing the category of e.c. models.

Let us see this for  the existential quantifier.  Let $\phi(x,y)$ be atomic in the language $\CL$, and let
$\CL^+=\CL \union \{\Phi(x)\}$, where 
$\Phi(x)$ intended to stand for  $(\exists y)\phi(x,y)$.    Let $\CT^+$ be the theory 
consisting of all sentences $\neg (\exists x_1,\cdots,x_n) \bigwedge _j Q_j$, where each $Q_j$ is
a symbol of $\CL^+$, and where if $Q_j $ is replaced by $(\exists y)\phi$ in each occurrence, we obtain
a consequence of $\CT$.    For any $M \models \CT$, define a $\CL$-structure $M^+$
by interpreting $\Phi$ as $(\exists y)\phi(x,y)$.  

\claim{}  
$M \mapsto M^+$, $N \mapsto N|\CL$  define  a 1-1 correspondence between e.c. models
of $\CT$ and of $\CT^+$.
\prf Let $N$ be an e.c. model of $\CT^+$,  $M = N | \CL$, and let $a \in \Phi(N)$.  Then the definition of $\CT^+$ implies that 
$(\exists y ) \phi(x,y)$
is   consistent with $\CT$ along with any pp formula true of $a$.  Since $N$ is e.c., we have $N \models 
(\exists y ) \phi(x,y)$.  Conversely, if $N \models (\exists y ) \phi(x,y)$, 
then the axioms  $\CT^+$ continue to hold if we modify $N$ by setting $\Phi(a)$ to be true
(since in any potential counterexample to an axiom, replacing each occurence of $\Phi(a)$ by 
$ (\exists y ) \phi(x,y)$ would yield a counterexample in $M$ to an axiom of $\CT$.)  Again since
$N$ is e.c., we have $\Phi(a)$.  So we have $N=M^+$.    

Next let us see that $M$ is e.c.  Let $f: M \to M'$ be a homomorphism.   Then $f$  extends to a homomorphism
$f:M^+ \to (M')^+$, and the existential closudness of $M^+=N$ immediately implies the same for $M$.

Conversely, assume $M$ is e.c.  Then $M  = (M^+)|L$.  It remains to show that $M^+$ is e.c.
Let $g: M^+ \to N$ be a  $\CL^+$- homomorphism.      To prove the Tarski-Vaught property,
i.e. existential closedness of $M^+$ with respect to this map, we may compose $g$ with any homomorphism $N \to N'$.  So we may assume $N$ is e.c.; and thus by the above, $N\models \Phi  \iff (\exists y)\phi$.
This easily implies the  existential closedness of $M^+$.
\eprf
In particular, $\CT^+$ is irreducible if $\CT$ is; and $\CT^+$ eliminates the quantifier in $(\exists y)\phi(x,y)$.

 One could similarly deal with   finite disjunctions.  If $P$ is added to stand for $P_1 \vee P_2$,
 the axioms would be $\neg \exists x \bigwedge_j Q_j$, where each $Q_j$ is $P$ or an existing symbol,
 such that replacing each $P$ with $P_1$ or $P_2$ (chosen arbitrarily) yields a consequence of $\CT$.   
 For $M \models \CT$ we define $M^+$ naturally, and show as above that an e.c. model $N$ of $\CT^+$ 
 has the form $M^+$, with $M$  an e.c. model $M$ of $\CT$.    Conversely if $M \models \CT$ is e.c.,
 then $M^+$ is e.c.; for if $f: M^+ \to N$ is a homomorphism, $Th_{p\forall}(N)=\CT^+$, we may assume $N$ is e.c., etc.

   In the setting of $|L|^+$-pp-saturated e.c. models one can even
 eliminate  an {\em infinite} conjunction, $\bigwedge_i P_i$, by introducing a symbol $P$ for it,
 obtaining a language $\CL^+$.   (For simplicity we consider a single conjunction, but any family can be handled in the same way.)
   We let 
   \[\CT^+ = \{ \neg (\exists x_1,\cdots,x_n) \bigwedge _j Q_j' :   \CT \models \neg (\exists x_1,\cdots,x_n) \bigwedge _j Q_j' \} \]
  where $Q_j'= Q_j$ if $Q_j$ is not one of the $P_i$, and $Q_j' \in \{P_i,P\}$ if $Q_j=P_i$.  
Note that $\CT^+$ contains $\CT$, and that in any model $N$ of $\CT^+$, if we re-interpret $P_i$ by
as $P_i(N) \union P(N)$, we obtain again a model of $\CT^+$.

Any model $M$ of $\CT$ expands canonically to a model $M^+$ of $\CT^+$,
with  $P$   interpreted as $\meet_i P_i$.  (If $M$ is $|\CL|^+$- saturated, and then the universal primitive theory of $M$ is precisely $\CT^+$, showing that $\CT^+$ is irreducible  if $\CT$ is.)

In case $M$ is e.c., this is the largest possible interpretation of $P$:
if $P'$ is another, then $P'$ implies $\neg Q$ for every  $Q \perp P_i$, so $P'$ implies $P_i$ for each $i$,
and hence $P'$ implies $P$.  Moreover $M^+$ retains the property that every homomorphism on $M^+$ is an embedding:    if $f: M^+ \to N$ is a homomorphism,
then $f$ is an $L$-embedding on $M$, and $P' = f \inv(P(N))$ is a possible alternative interpretation of $P$,
containing $P(N^+)$, so equal to it; hence $f$ is an $L \union \{P\}$-embedding.   In particular, endomorphisms
of $M^+$ are automorphisms.    (However, we do not necessarily have $M^+ \models (\CT^+)'$, if $M$ is not 
sufficiently saturated; and in particular $M^+$ may not be e.c.  This issue disappears if the conjunction is finite.)

Conversely, let $N$ is an e.c. model of $\CT^+$, $M$ the reduct to a model of $\CT$.    As noted
above, reinterpreting  $P_i$ by
as $P_i(N) \union P(N)$ results in another model $N'$ of $\CT^+$ with the identity map a homomorphism
$N \to N'$; so we must have $N=N'$, i.e. $P$ implies $P_i$ in $N$.     It follows that if $f: M \to M'$ is a homomorphism, then it is also a homomorphism $N \to (M')^+$; 
since $N$ is e.c., the Tarski-Vaught property holds for $N \to (M')^+$ and in particular for $M \to M'$.
 Thus $M$ is an e.c. model of $\CT$.      Now $N \to M^+$ is
a homomorphism, so as $N$ is e.c., $N=M^+$.    

This shows in particular that the  e.c. models of $\CT$ and of $\CT^+$ can be canonically identified, when a finite conjunction is eliminated.  In 
 case $\CT$ is p.p. bounded, the universal (and thus pp-saturated) e.c. model does not change; except that $\meet_i P_i$ is now also named by $P$.  

 Of course, even if each $P_i$ admits a complement, $P$ may not; thus naming a type here does not have the effect it does in \cite{morley-number}.

\ssec{Saturated models and bounded theories}  \label{saturated}

The category of e.c. models with embeddings admits amalgamation:   if $f_i: A \to B_i$, we may embed each $B_i$ in an ultrapower $A^*$ of $A$, then compose with a homomorphism to an e.c. model.  

Since homomorphisms need not be injective,    there may be an upper bound $\theta$ on the cardinality of 
 existentially closed models.   Call $\CT$ {\em ec-bounded} in this case.    This is indeed the case that concerns us;
 {\em assume from now on that $\CT$ is ec-bounded.}   
 
 An e.c. model $M$ is called {\em $\kappa$-saturated} if for any e.c. $A \leq M$ and any embedding $f: A \to B \models \CT$ with 
 $|B| < \kappa$,   there exists a homomorphism $g: B \to M$ with $g \circ f = Id_A$.  
The  usual existence theorem for $\kappa$-saturated models remains valid:   for any cardinal $\kappa \geq |L|$, there exists a $\kappa^+$-saturated e.c. model (of cardinality $\leq 2^\kappa$).  Thus   there exists a  $\kappa$-saturated e.c. model  $\U$ of $\CT$     of  cardinality $\leq \theta$, which is $\kappa$-saturated
for all $\kappa$.  In particular,   the irreducibility  assumption on $\CT$ implies that $\U$ is {\em universal} in the sense that any model $N$ of $\CT$  admits a homomorphism  into $\U$; if $N$ is
e.c., $N$ embeds into $\U$.    Note that  $\U$ is    homogeneous  for {pp} types.  

\begin{prop}\label{prop1}  Assume $\CT$ is ec-bounded.  Then it has a unique universal e.c. model $\U$ (up to isomorphism.)  
Any homomorphism on $\U$ is an embedding, and 
    any endomorphism $f: \U \to \U$ is an isomorphism.   If $\U \leq V \models \CT^{\pm}$ then 
    there exists a homomorphism $r: V \to \U$ with $r|\U = Id_\U$.  
\end{prop}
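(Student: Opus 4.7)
\smallskip

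\noindent\textit{Retraction, to which the other three claims reduce.} The plan is to reduce all four assertions to the retraction statement: once that is in hand, the second claim is Remark 2.2(1) applied to the e.c.~model $\U$, and the third and first follow by a short idempotent-embedding argument. For the retraction, given $\U \leq V \models \CT^{\pm}$, I would enumerate $V \setminus \U$ as $(v_\alpha)_{\alpha < \mu}$ and build $r : V \to \U$ by transfinite recursion, starting from $r|_\U = \Id$. Writing $D_\alpha = \U \cup \{v_\beta : \beta < \alpha\}$ and $r_\alpha = r|_{D_\alpha}$, the inductive hypothesis to carry is the strengthened statement
\[
\text{for every pp } \phi(\bar y) \text{ and } \bar a \in D_\alpha, \quad V \models \phi(\bar a) \;\Longrightarrow\; \U \models \phi(r_\alpha(\bar a)).
\]
At stage $0$ this is precisely the e.c.~property of $\U$ with respect to $\U \hookrightarrow V$. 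To extend across $v_\alpha$, consider the pp-type $q(x) = \{\phi(r_\alpha(\bar a), x) : \bar a \in D_\alpha,\; V \models \phi(\bar a, v_\alpha)\}$; finite satisfiability of $q$ in $\U$ amounts, for a single conjunction, to $\U \models \exists x\, \Phi(r_\alpha(\bar a), x)$, where $\exists x\, \Phi(\bar y, x)$ is itself pp in $\bar y$ and holds in $V$ at $\bar a$, which is a direct instance of the inductive hypothesis. By pp-saturation of $\U$ (available since $\U$ is $\kappa$-saturated for all $\kappa$), $q$ is realized by some element, taken to be $r(v_\alpha)$; limit stages are unions, and the hypothesis passes through trivially.

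\smallskip

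\noindent\textit{Endomorphism is automorphism, and uniqueness.} Given $f : \U \to \U$, Remark 2.2(1) makes it an embedding, so $f(\U) \cong \U$ is a universal e.c. submodel. Applying retraction to the pair $f(\U) \leq \U$ produces $r : \U \to f(\U)$ restricting to $\Id$ on $f(\U)$; regarded as an endomorphism $s : \U \to \U$, $s$ is an idempotent embedding with image $f(\U)$. If some $u \in \U \setminus f(\U)$ existed, then $s(u) \in f(\U)$, and idempotence together with injectivity would force $s(u) = u \in f(\U)$, a contradiction; hence $f(\U) = \U$. For uniqueness, if $\U'$ is any other universal e.c. model, universality embeds $\U' \hookrightarrow \U$, retraction gives $\U \to \U'$ right-inverse to this embedding, and their composition is an idempotent endomorphism of $\U$, hence by the preceding an automorphism, so $\U' = \U$ inside this copy and in particular $\U \cong \U'$.

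\smallskip

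\noindent\textit{Main obstacle.} The delicate point is the strengthened inductive hypothesis above: the weaker statement that "$r_\alpha$ is a homomorphism from $D_\alpha$ to $\U$" does not suffice to verify consistency of $q(x)$, since a pp-formula $\exists x\, \Phi(\bar a, x)$ may hold in $V$ (witnessed by a not-yet-processed $v_\beta$) while failing in the substructure $D_\alpha$. Replacing "$D_\alpha \models$" by "$V \models$" in the hypothesis resolves this: $V$-truth of pp-formulas at tuples in $D_\alpha$ is preserved along $D_0 \subseteq D_\alpha \subseteq V$ without further effort, and the hypothesis is installed at stage $0$ precisely by the e.c.~property of $\U$ in $V$.
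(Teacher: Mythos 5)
Your proposal is correct, but it takes a genuinely different route from the paper's, and part of it is more work than the definitions require. The paper proves the claims in the opposite order: it first shows every endomorphism $f$ of $\U$ is an automorphism, by using $|\U|^+$-saturation to extend the embedding $f\inv : f(\U) \to \U$ to an embedding $g : \U \to \U$ and then observing that $g$ is injective while $g|_{f(\U)}$ is already surjective, forcing $f(\U)=\U$; the retraction statement then drops out in one line from universality ($f: V \to \U$ exists, restricts to an automorphism $g$ of $\U$, and $r = g\inv\circ f$ works). You instead build the retraction first by transfinite recursion, then derive the endomorphism and uniqueness claims from it. Your recursion is sound — the strengthened inductive hypothesis you isolate (tracking $V$-truth rather than $D_\alpha$-truth of pp-formulas) is exactly what makes the successor step go through, and the idempotence-plus-injectivity argument that $f(\U)=\U$ is a clean alternative to the paper's surjectivity count. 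But be aware that the recursion is in fact unnecessary: the paper's notion of $\kappa$-saturation is not stated as realization of pp-types but precisely as the extension property "for any e.c.\ $A \leq M$ and embedding $A \to B \models \CT$ with $|B| < \kappa$ there is $g : B \to M$ with $g \circ f = \Id_A$", and specializing $A = M = \U$, $B = V$, $\kappa > |V|$ yields the retraction immediately. Your invocation of "pp-saturation of $\U$" reads the hypothesis in the weaker realization-of-types form, which does follow from the paper's definition but only after an argument that essentially is a one-step instance of the very retraction you are building.
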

\prf  Existence of a saturated (in any cardinality)   ${\U}$ was seen above; it is in particular universal. 
We also noted that homomorphisms on $\U$ are embeddings.   Let $f: {\U} \to {\U}$ be an endomorphism,
with image $U'$.  Then $f \inv: U' \to {\U}$ is an embedding, that extends (by the $|{\U}|^+$- saturation of ${\U}$)
to an embedding $g: {\U} \to {\U}$.  Since $g$ is injective, while $g|U'$ is surjective, we must have ${\U}=U'$.
For the last statement, as ${\U}$ is universal there exists a homomorphism $f: V \to {\U}$;
 on ${\U}$ it induces an isomorphism $g$;  so $r=g \inv \circ f: V \to {\U}$ is as required.  
 
 It remains to show that any universal e.c. model $U$ is isomorphic to the saturated $\U$.  Since  $U$ is universal,
 there exists a homomorphism $f: U \to \U$, which must be an embedding; so we may assume $U \leq \U$.
 Then there exists a retraction $r: \U \to U$.  But endomorphisms of $\U$ are isomorphisms, so $\U \cong U$.
\eprf


 \begin{rem} We are dealing here with the analogue of {\em finite} structures in first-order logic, or {\em compact} ones in continuous logic.     This is the basic material  of algebraic closure in positive logic. 
 In \cite{wtaylor},  the universal e.c. structure  of a pp-bounded theory is studied  under the name of {\em minimum compactness}.
  
 We observe (though we will make no use of the fact) that   an ec-bounded  theory is 
 {\em equational}, in particular stable, in the following sense:  
 
(E) If $p(x,y),q(x,y)$ are ${\TS}$-contradictory {pp} partial types, there is a finite bound on the length of a sequence $(a_i,b_j)$
with $p(a_i,b_j)$ for $i<j$ and $q(a_i,b_i)$.

Otherwise,
in some $M \models {\TS}$ there will be a long chain of such elements $(a_i,b_i)$.  By homomorphically mapping into an e.c. model, we may assume $M$ is e.c.  For $i<j$ we have $p(a_i,b_j)$ and so not
$q(a_i,b_j)$, yet we do have $q(a_i,b_i)$; so $b_i \neq b_j$.  This contradicts the bound on the size of e.c. models.  
\end{rem}
  
 \label{1}

 \ssec{The {pp} topology on ${\U}$ and $\Aut({\U})$.}    
 \label{pptop}
 Let us topologize ${\U}$, taking as a pre-basis the complements of sets of the form 
 $\{x:  R(x,c_1,\ldots,c_k) \}$, with $R$   {pp} and $c_1,\ldots,c_k \in {\U}$.   
 Under this topology, {\em ${\U}$ is T1:}  if $a \neq b \in {\U}$, there exists a {pp} $R$  
such that   ${\U} \models R(a,b) \ \wedge \ (\forall x) \neg R(x,x)$. 
\footnote{Let $E$ be the congruence generated by $(a,b)$, e.g. if there are no function symbols $E$ is the equivalence
relation identifying $a,b$ only.  Let $\U' = \U/E$ so that $\U \to \U'$ is a homomorphism.  As $\U$ is   e.c.,
$\U'$ cannot be a model of $\TS$, so that $\TS \models \neg S(z,x,x)$ and $\U \models S(c,a,b)$ for some
conjunction $S$ of atomic formulas.  Let $R(x,y) = (\exists z)S(z,x,y)$.}
  Then $\neg R(x,b)$ is
 an open set including $b$, but not $a$; so $b \notin cl(a)$.  As this holds for all $b \neq a$
 we have $cl(a)=a$.
 
 Also, {\em $\U$ is compact}:  consider a family $F_i$ of basic closed sets with the finite intersection property.
 $F_i$ is defined  by $R_i(x,c_i)$ with $R_i$ {pp}.  In an elementary extension $U'$ of ${\U}$,
 one can find $d'$ with $\R(d',c_i)$ holding for all $i$.  By \propref{prop1} there exists $r: U' \to {\U}$,
 $r|{\U}=Id_{\U}$.  Let $d=r(d')$.  Then $R_i(d,c_i)$ holds for each $i$, so $d \in \meet_i F_i$ and
 $\meet_i F_i \neq \emptyset$.

 Let $G= \Aut({\U})$.   Since $\U$ is many-sorted, a function $f: \U \to \U$ 
 is actually a sequence of functions $f_S: S(\U) \to S(\U)$, indexed by the sorts $S$; we take the sorts to be closed under finite products.  Each $f: \U \to \U$ can be viewed as a certain function on the disjoint union of sorts,
 respecting the projection maps from products to factors.       We give $G$ the   topology
 of pointwise convergence, induced from the space of functions $\U \to \U$.  Thus if $a_1,\ldots,a_n,b_1,\ldots,b_m \in \U$ and $R$ is pp,
 then  \[\{g: \neg R (ga_1,\ldots,ga_n,b_1,\ldots,b_m) \}\]  is a pre-basic open set.     As $\U$ is T1, so is $G$.  Let us see that 
 {\em   $G$ is compact.}  Let $u$ be an ultrafilter on a set $I$, and let $g_i \in G$, $i \in I$;
  we need to find a limit point $g$ of $(g_i)_i$ along $u$.  Let $\U^*$ be the ultrapower of $\U$ along $u$,
  and let $g_*: \U \to \U^*$ be the ultraproduct of the maps $g_i: \U \to \U^*$; let $j$ be the diagonal embedding
  $\U \to \U^*$, ultrapower of $Id: \U \to \U$.  
      As $\U^* \models  \CT^{\pm}$,
    \propref{prop1} provides    a homomorphism $r: \U^* \to \U$ with $r \circ j = Id_{\U}$.  Let $g=r \circ g_* $.
 Then $g \in End(\U)=\Aut(\U)$.  If $R(g_ia_1,\ldots,g_ia_n,b_1,\ldots,b_m)$ holds for $u$-almost all $i \in I$,
 then $\U^* \models R(g_*a_1,\ldots,g_*a_n,jb_1,\ldots,jb_m)$ so $\U \models R(ga_1,\ldots,ga_n,b_1,\ldots,b_m)$.
Hence   $g$ is indeed a limit point of $(g_i)_i$ along $u$.

  {\em left and right translation are continuous.}  Indeed a pre-basic open
 set has the form $B=\{g: g(p) \in W\}$ where $W$ is a pre-basic open subset of ${\U}$, and $p \in {\U}$.
 For $a \in G$ we have $aB = \{h:  h(p) \in W' \}$ and $Ba = \{g:  g(p') \in W \}$ 
 where $W'=aW$ and $p'=a(p')$.   These are also pre-basic open.      Further, {\em inversion on $G$ is continuous.}
 Indeed a pre-basic closed subset of $\U$ has the form $\{z:  (z,q) \in R\}$ where $R$ is a basic relation, 
 and $q$ is a tuple from $\U$.  Thus a pre-basic closed subset of $G$ has the form 
 $W=\{g \in G:  (g(p),q) \in R\}$ where $p$ is a    tuple  of elements of $\U$.  So  $W \inv = \{g \in G:  (p,g(q)) \in R\}$, another pre-basic closed set of $G$, with the parameters and test points interchanged.   
  
  Let $\U_h$ denote the union of all $P(\U)$, with $P$ a  pp partial type that is Hausdorff
in the pp topology.  (Including imaginary sorts, defined below.)  We will see in the case of interest to us that the restriction $\Aut(\U) \to \Aut(\U_h)$ is surjective.   In any case,
with the topology described above, it is clear that 
 {\em $\Aut(\U_h)$ is Hausdorff.}
  
 At this level of generality, it follows from Ellis' joint continuity theorem \cite{ellis} (relying on a Baire category argument) that $\Aut(\U_h)$ is a compact Hausdorff topological group, acting continuously 
 on $\U_h$.    In our setting,  with $\U=\co(T)$
 the pattern space of a theory $T$, we can easily see this directly; 
  the compact-open and finite-open topologies coincide on $G$ by \remref{closedopen}.

\ssec{Logical complexity}    \lbl{complexity}  Assume $\CL$ is countable.  What is the logical complexity of 
the above construction; for instance of 
determining, given $\CT$, whether $\Aut(\U)=1$?  We have $\Aut(\U) \neq (1)$ iff  there exist conjugate but distinct elements in $\U$; this is iff
  there exists a maximal pp type $p(x,y)$ with (a) equal restrictions to $x,y$, and (b) $p(x,y)$
  guaranteeing distinctness of $x,y$.  Now (b)  holds  iff for some pp $\theta(x,y)$ in $p$,
  $\CT \models \neg (\exists x) \theta(x,x)$.   On the other hand, (a) holds iff for all 
  $\phi(x)$, and $\phi'$ orthogonal to $\phi$, $p(x,y)$ contains a formula   orthogonal to
  $\phi(x) \& \phi'(y)$.  
  This is (at worst) an analytic ($\Sigma^1_1$) condition on $\CT$. 
  
    Likewise for existence of a homomorphism into a fixed finite group or compact Lie group; also for $\Aut(\U_h)$.

It is also worth nothing that if $\Aut(\U)=1$, then $\U$ admits a Borel structure.  The natural map taking an element of the core to a 
pattern type is in this case 1-1,   and the image of $\U$, as well as of the relations $\R_t$, is Borel.

\ssec{Imaginary quotients} 
\label{quotients}

Assume $\U$ admits $\bigwedge,\exists$-elimination, in the sense that a conjunction of finitely many atomic formulas
is atomic, and a pp relation is also atomic.  This can be achieved by an appropriate Morleyzation, 
 see \secref{morleyzation}.

Let $E$ be a closed equivalence relation on some sort $\Sigma$ of $\U$; i.e. $E$ is an intersection of 
pp-definable subsets $E_n$ of $\Sigma^2$, and is an equivalence relation on $\Sigma$.  For simplicity, we will consider only the sort $\Sigma$; we will write $E$ and $E_n$ also for the diagonal relations on $\Sigma^k$,
i.e. $(x_1,\ldots,x_k)E(y_1,\ldots,y_k)$ iff $\bigwedge_{i=1}^k x_iE y_i$.  
We can add an additional sort  $\bSi=\Sigma/E$; with the natural map $\pi: \Sigma \to \bSi$.  
But we are  interested at the moment in $\bSi$ on its own right.   We let $\U'$ be the $\LS$-structure
with universe $\bSi$, and with $R(\U'):= \pi R(\U)$ for every $n$-ary atomic relation $R$.  
{\em Note that a sentence  $R=R' \meet R''$, true in $\U$, need not remain true in $\bSi$.
 Thus $\U'$ may not admit $\bigwedge$-elimination.}  Let $\TS' $ be the primitive universal   theory of $\U'$.
What are the axioms of $\TS'$?   For a single atomic relation $R$, the sentence $\neg \exists x R$ will
be in $\TS'$ if and only if it is in $\TS$.   But for a conjunction, say of two conjuncts $R,R'$, we have:
\[(*) \ \       \TS' \models \neg  \exists x (R(x) \wedge R'(x)) \hbox{
iff for some } n, \ \ \ 
 \TS \models  \neg  \exists x, x' (R(x) \wedge R'(x') \wedge E_n(x,x')) \]

\begin{lem}  $\U'$ is the universal e.c. model of $\TS'$.  Any endomorphism of $\U'$ lifts to
an automorphism of $\U$.  \end{lem}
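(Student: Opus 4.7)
The plan is to split the lemma into (a) $\U'$ is existentially closed, (b) every endomorphism of $\U'$ lifts to an endomorphism of $\U$ (hence an automorphism by \propref{prop1}), and (c) $\U'$ is universal among models of $\TS'$; combined, (a) and (c) identify $\U'$ as \emph{the} universal e.c.\ model of $\TS'$ via \propref{prop1}. For (a), suppose $f:\U'\to M\models\TS'$ and $M\models R(f\vec a)$ for atomic $R$ but $\U'\not\models R(\vec a)$. Lifting $\vec a$ via $\pi$ to $\vec a'\in\U$, this means $(\exists\vec y)(R(\vec y)\wedge E(\vec y,\vec a'))$ fails in $\U$. Since $E=\bigcap_n E_n$ and $\U$ is pp-saturated, some $E_n$-truncation already fails; \lemref{u}(1) then supplies a pp formula $\phi$ orthogonal to $(\exists\vec y)(R(\vec y)\wedge E_n(\vec y,\vec x))$ and realized at $\vec a'$. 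The description $(*)$ places the pp-universal sentence $\neg\exists\vec x(\phi\wedge R)(\vec x)$ into $\TS'$, hence into $M$; but $\phi$ is atomic by the $\bigwedge,\exists$-eliminations, so $\phi^{\U'}(\vec a)=\pi\phi^{\U}(\vec a)$ holds, $f$ preserves pp, and we arrive at $M\models\phi(f\vec a)\wedge R(f\vec a)$, a contradiction.

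For (b), given $g\in\Aut(\U')$, I fix set-theoretic lifts $c_a\in\pi^{-1}(g\pi(a))$ and aim to realize in $\U$ (by pp-saturation) the pp-type $p((X_a)_{a\in\U})$ consisting of $R(X_{\vec a})$ for each atomic $R$ with $\U\models R(\vec a)$, together with $E_n(X_a,c_a)$ for all $n,a$. A realization defines the sought $\tilde g:a\mapsto X_a^*$, which is an endomorphism with $\pi\tilde g=g\pi$, hence an automorphism by \propref{prop1}. For finite satisfiability: a finite piece is a pp formula $\Phi(\vec X,\vec c)$, atomic by $\bigwedge$-elimination, whose existential closure $\Psi(\vec c):=(\exists\vec X)\Phi$ is atomic by $\exists$-elimination. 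Taking $\vec X=\vec a$ gives $\U\models\Psi(\vec a)$ (reflexivity of $E$ and the assumed $R_k(\vec a)$), so $\U'\models\Psi(\pi\vec a)$; applying $g\in\Aut(\U')$ yields $\U'\models\Psi(g\pi\vec a)=\Psi(\pi\vec c)$, and by $\Psi(\U')=\pi\Psi(\U)$ there are $\vec c'\in\U$ with $\vec c'\,E\,\vec c$ and $\U\models\Psi(\vec c')$, i.e.\ $\Phi$ is realized at the \emph{shifted} parameters $\vec c'$ in place of $\vec c$.

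The main obstacle is this $E$-perturbation: $E_n(X_s^*,c_s')$ with $c_s'\,E\,c_s$ does not directly yield $E_n(X_s^*,c_s)$, since $E_n$ is neither transitive nor $E$-invariant. I would resolve this by replacing $p$ with an equivalent type $p^\dag$ using the weakened pp condition $E_n\circ E_n(X_a,c_a):=(\exists y)(E_n(X_a,y)\wedge E_n(y,c_a))$ in place of $E_n(X_a,c_a)$: the witness $y=c_s'$, together with $E_n(c_s',c_s)$ (from $c_s'\,E\,c_s\subseteq E_n$), then directly establishes finite satisfiability of $p^\dag$. Equivalence of $p^\dag$ and $p$ on $\U$ follows from $\bigcap_n(E_n\circ E_n)=E$ in $\U$, which in turn follows from $|\U|^+$-pp-saturation: given $(x,y)\in\bigcap_n E_n\circ E_n$, the pp-type $\{E_n(x,z)\wedge E_n(z,y):n\}$ in $z$ is finitely satisfiable, hence realized by some $z$ with $E(x,z)$ and $E(z,y)$, giving $E(x,y)$. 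Finally for (c), given $N\models\TS'$ I construct a $\CT$-structure $\tilde N$ by adjoining to each element of $N$ a discrete fiber realizing a potential $E$-class and inheriting atomic relations via lifted witnesses; universality of $\U$ gives $\tilde N\to\U$, and composition with $\pi$ descends to the desired $N\to\U'$.
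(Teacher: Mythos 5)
Your decomposition targets the same three facts as the paper, but two of your three pieces have gaps.

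In (a), what you actually prove is that any homomorphism $f:\U'\to M$ reflects atomic relations, i.e.\ is an embedding. This is strictly weaker than existential closedness, which requires reflecting pp sentences $(\exists\vec y)\phi(\vec y,\vec a)$ with parameters. The key obstacle is exactly the one the paper flags: $\U'$ does \emph{not} inherit $\bigwedge$-elimination, so a pp sentence over $\U'$ cannot be traded for an atomic one in $\TS'$-models the way it can in $\U$, and the Morleyization trick you invoke is unavailable on the $\U'$ side. Your intended conclusion does follow from embeddingness together with (c) — the universal e.c.\ model $V$ of $\TS'$ embeds into $\U'$, $\U'$ embeds into $V$, the composite endomorphism of $V$ is an automorphism by \propref{prop1}, forcing $\U'\to V$ to be onto, hence an isomorphism — but this extra argument is not in your write-up; you assert that ``(a) and (c) give it via \propref{prop1}'' as if (a) already established e.c. (The paper instead derives e.c.\ from (b) and (c): compose $f:\U'\to M$ with a homomorphism back to $\U'$, lift the resulting endomorphism via (b), conclude it is an automorphism, and transport pp witnesses through it.) For (c), the sketch is too vague to check: the entire difficulty is showing that the proposed auxiliary structure $\tilde N$ models $\TS$, which is precisely where the description $(*)$ must enter, and you say nothing about it. The paper avoids building $\tilde N$ at all by realizing directly in $\U$ (via saturation and $(*)$) the type $\bigcup_R \Gamma_R$, with fresh witness variables for each atomic instance holding in $N$, $E_n$-linked to the variables for the elements of $N$.

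In (b), the argument is correct but self-complicates. Passing through the existential closure $\Psi=(\exists\vec X)\Phi$ and pulling back along $\Psi(\U')=\pi\Psi(\U)$ is what produces the $E$-shifted parameters you then have to repair. The paper's route avoids the perturbation entirely: apply $\bigwedge$-elimination only to the atomic conjuncts $R_k$ to get a single atomic $R$, leave the $E_n$-conjuncts as they are, observe $\U'\models R(\pi\vec c)$ (since $\U'\models R(\pi\vec a)$ and $g$ preserves $R$), and then use $R(\U')=\pi R(\U)$ directly to get $\vec d\in R(\U)$ with $\vec d\,E\,\vec c$ at the intended parameters; since $E\subseteq E_n$, $\vec d$ satisfies the finite piece outright. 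Your $E_n\circ E_n$ device and the saturation argument for $\bigcap_n(E_n\circ E_n)=E$ are sound, but the detour that necessitated them was avoidable.
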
 

\prf  We first check that $\U$ is universal.  Let $A \models \TS'$, and let $(a_i)_{i \in I}$
enumerate the universe of $A$.   We introduce  variables $(x_{i}: i \in I)$.  Also for each
instance of an atomic $k$-place relation $R(a_{i_1},\ldots,a_{i_k}) $ valid in $A$, we introduce new variables $y_1,\ldots,y_k$ especially for this instance of $R$, and let 
\[\Gamma_R = \{R(y_1,\ldots,y_k) \wedge y_\nu E_n x_{i_\nu} : \nu=1,\ldots,k\}.\]
This collection of formulas can be realized in $\U$, using the saturation of $\U$
and the description (*) of $\TS'$ above.   
Such a realization defines a map $f: A \to \U$, mapping $a_i$ to the realization of $x_i$,
such that the composition $\pi \circ f : A \to \U'$ is a homomorphism.  This proves universality of $\U'$. 

Next let $ f: \U' \to \U'$ be an endomorphism.  Let $a=(a_i)_{i \in I}$ enumerate the universe of
$\U$.  Choose $c_i \in \U$ with $\pi(c_i) = f(\pi(a_i))$.  Let $\G(x)$ be the atomic type of $a$
in appropriate variables $x=(x_i)_{i \in I}$.  
  We seek $b$ realizing  
  \[ \Gamma'(y) = \Gamma(y) \union \{ y_i Ec_i: i \in I \} \]  
  By saturation of $\U$, it suffices to prove consistency; so consider finitely many formulas of $\G'$; for instance
$R_1(y_1,y_2) \wedge R_2(y_1,y_2) \wedge y_1 E_n c_1 \wedge y_2 E_n c_2$.  
By the $\bigwedge$-elimination assumption about $\CL$, there exists an atomic $R $ with  
\[ \U \models  R \iff (R_1 \wedge R_2) .\]
Thus we reduce to the case of a single $R$:  we have to solve $R(y_1,y_2) \ \wedge y_1 E_n c_1 \wedge y_2 E_n c_2$.  Existence of such $y_1,y_2$ follows  from (*) and the fact that $\U' \models R(\pi(a_1),\pi(a_2))$ and hence, $f$ being a homomorphism, $\U' \models R(\pi(c_1),\pi(c_2))$.  
Thus there exists $b$ with $\U \models \Gamma'(b)$.  Define $F(a_i)=b_i$; then $F: \U \to \U$
is an endomorphism (and hence an automorphism) of $\U$, and $\pi \circ F = f \circ \pi$.

To see that $\U'$ is e.c., let $f: \U' \to A$ be a homomorphism into a model of $\TS'$.
Compose $f$ with some homomorphism $A \to \U'$.  To show that $f$ is an embedding,
it suffices to show the same of the composition; so we may assume $f: \U' \to \U'$.
In this case we saw that there exists $g: \U \to \U$ inducing $f$.  As $g$ is an automorphism, so must be $f$.   

\eprf
 \ssec{Type spaces}

   For a model $M$ of $T^{\pm}$, and a finite set of variables $y$ of $L$, we let $M^y$ be the set $y$-tuples of elements of $M$, i.e. the set of  functions from $y$ to $M$ preserving sorts.
Given  a set $\gamma$ of formulas of $L$  along with a distinguished   set $x$ of variables,   
  a $\gamma$-type  $p$ over $M$ is a set of formulas of the form
  \[p=tp_{\gamma}(a/M) = \{\phi(x,b): \phi(x,y) \in \gamma, b \in M^y,  N \models \phi(a,i(b)) \}\]
  where $i: M \to N$ is a homomorphism, $a \in N^x$, $N \geq M$.  
  The set $S_\g(M)$ of $\gamma$ types over $M$ has a natural compact topology, with basic open sets of the form
  $\{p: \phi(x,b) \in p\}$.    The subspace of {\em maximal} types is Hausdorff.   When $\gamma$ includes all formulas with distinguished variables $x$, we write $S_x(M)$.    We will assume that $\gamma$ is closed under negations (at least in the sense that any $\phi(x,a)$ with $\phi \in \g$ is equivalent to the negation of some $\phi'(x,b)$ with $\phi' \in \g$.)

%

 Type spaces will be treated, notationally, as simplicial 
spaces (\cite{morley-types}), meaning that we can write $S(M)$ for the data associating  to any   $\gamma$ the space $S_\g(M)$. 
For infinite  sets of formulas $\Gamma$, $S_\Gamma$ can be defined in the same way, or equivalently as the inverse limit of $S_\g$ over all finite $\g \subset \Gamma$.

  \begin{rem}[Interpolation] \label{interpolation} Let $\CL \subset \CL'$ be relational languages, $\CT'$ a primitive universal theory,
 $\CT = \CT' | \CL$.  
Let $N'$ be an e.c. model for $\CT'$, $N:= N'_L $ the reduct of $N'$ to $L$.  Assume
     interpolation holds in this form:   if 
 $R' \in \CL'$, $R \in \CL$ are pp formulas, $\CT' \models \neg (R \wedge R')$,  and $N' \models R'(a)$, 
  then for some pp $S \in \CL$ we have $N' \models S(a)$ and $\CT \models \neg (R \wedge S)$. 
 Then $N$ is an e.c. model of $\CT$.    If $N'$ is universal e.c., so is $N$. 
 
 In particular, this is the case if $\CL'$ is obtained from $\CL$ by adding constant symbols.

 \end{rem}
        
 \prf  Let $M \models \CT$, and let $f: N \to M$ be a homomorphism.  Expand $M$ to an $\CL'$-structure $M'$ by
  interpreting any basic relation symbol $R'$ of $\CL$ as $(R')(M') = f(R'(N))$; thus $f:N' \to M'$ is a $\CL'$- homomorphism.
 It is easy to see from the assumed  interpolation property   that $N' \models \CT'$.  Given this, the 
 e.c. property of $N'$ with respect to $f$ includes the same for $N$.    Hence $N$ is e.c.  If $N'$ is universal e.c.,
 let $M \models \CT$; then the diagram of $M$ is consistent with $\CT$ and hence with $\CT'$; so 
  there exists a $\CL$-homomorphism $g: M \to M'$ into a model of $\CT'$.    By universality of $N'$, there exists
  $h: M' \to N'$, and hence by composing we have a homomorphism $M \to N$.
   \eprf


\end{section}

\def\CU{\mathcal{J}}

\begin{section}{A relational structure on type spaces}
\label{section-rel}

Let $T$ be a   universal theory.    We assume that any two   models of $T$ can be embedded into a single model
(joint embedding property).   
   We allow $T$ to be many-sorted, and sometimes refer to a product of sorts, or a {}definable subset, as itself a sort.\footnote{Formally, these are indeed imaginary sorts.}  We take a fixed countable set of   variables for each sort.  
$|L|$ is the number of formulas of $L$.    {\em Unless otherwise stated, we consider only quantifier-free
formula in this section.}  \footnote{ Let us say that  $T$ is {\em QEble} if there exists a complete theory $T_1$   with quantifier elimination, whose universal part is $T$.   If we {\em begin} with a complete first-order theory $T'$, we first Morley-ize to obtain a thery $T_1$ with QE , then let $T=(T_1)_{\forall}$ be the  
the universal part, and apply the theory below to $T$ in order to obtain results about $T'$.}
Let 
\[T^{\pm} = T_{\forall} \union \{\neg \phi: \ \phi \hbox{ universal }, \phi \notin T_{\forall} \}\]
  
    We aim to associate with $T$ a  {\em language $\LS$ } (the {\em pattern language}), a canonical irreducible primitive  universal theory $\TS$ of $\LS$, and a canonical model $\CU :=:\co(T)$ of $\TS$, the {\em core} of $T$.  
    
     and an enrichment
of the type spaces of models of $T$ to models of this theory.

The language ${\LS}$ has the same sorts as the type spaces of $T$, i.e. a sort   for each set of formulas
$\gamma$ along with  a set of distinguished variables $x$.  For an $\LS$-structure $A$, this sort will be denoted by $S_\gamma$.
\footnote{We take only {\em finite} sets of formulas $\g$ for the official sorts. 
Still for infinite $\Gamma$, we can define $S_{\Gamma}$ as the projective limit of $S_{\gamma}$ over all finite $\gamma \subset \Gamma$.   
This will be compatible with definitions below .
In particular   a homomorphism defined on the official sorts extends uniquely to the derived infinite ones.  }



Let $x_i$ be an $n$- tuple of variables, for $i=1,\ldots, n$; they will be referred to as the distinguished variables.  Let $y$ be an additional tuple of variables (the {\em parameter variables}.)  
  Let $t=(\phi_1,\ldots,\phi_n;\alpha)$ be
an $n$-tuple of formulas $\phi_i(x_i,y)$ of $\gamma$, and let  $\alpha(y)$ be a formula.

To each such $t,\alpha$ we associate a   relation symbol $\R_t$ of ${\LS}$, taking variables $(x_1,\ldots,x_n)$.
 
For any $M \models {T^{\pm}}$, we define an ${\LS}$-structure whose  sorts are $S_\g(M)$ for the various sorts $\g$.   
When $t=(\phi_1,\ldots,\phi_n;\alpha)$ and $\phi_i \in \gamma_i$ we define $\R_t$ 
on $S=S_{\g_1} \times \cdots \times S_{\g_n}$
thus:  

 \[\R_{t;\alpha} ^S =\{(p_1,\ldots,p_n) \in S:   \neg (\exists a \in \alpha(M)) \bigwedge_{i \leq n} (\phi_i(x_i,a) \in p_i) \} \]


We omit $\alpha$ from the notation in case $\alpha$ is universally true, i.e.  $\alpha(M)=M^y$.    If $\gamma_i$ is closed under conjunctions with the  formula $\alpha(y)$, then $\R_{t;\alpha} \equiv \R_{t'}$ where
$t'= (\phi_1(x,y)  { \wedge } \alpha(y),\cdots, \phi_k(x,y)  { \wedge } \alpha(y))$.

\begin{example}\label{emptyset}  If $\phi(x)$ has no parameter variables, then 
$\phi \notin p$ iff $S \models \R_{\phi}(p)$. 
Thus the atomic type of $p$ in $S_x(M)$ determines the restriction of $p$ to $S_\g(\emptyset)$.
\end{example}

\begin{example} \label{def0} 
  $\R_{\phi(x,y) \iff   \theta(y)}$ captures the set of types $p(x)$, admitting $\neg \theta(y)$
as a $\phi$-definition. 
\end{example}

 \begin{example} 
 Let $\phi(x,y) \in \g$  .   In $S_\g(M)$, the relation
  \[E_\phi: \equiv \R_{(\phi  , \neg\phi ) }  { \wedge } \R_{(\neg \phi , \phi)}\] holds of a pair $p,p'$ iff they restrict to the same $\phi$-type over $M$. 
   In any $S_\g(M)$ and also in any e.c. model, $E_\phi$ is an equivalence relation,
  and the intersection of all $E_\phi$ is the diagonal.   Similarly, for a  finite set of formulas $\gamma$,   equality
  is definable by a {pp} formula, as is more generally the restriction map $S_\gamma \to S_{\gamma'}$ for $\gamma' \subset \gamma$.
 \end{example}
  
 \begin{example}  \lbl{completeba}  Assume $\phi(x,y)$ is {\em free} in the sense that for any distinct $b_1,\ldots,b_n \in M^y$,
 there exists $a$ such that $\phi(a,b_i)$ iff $i \leq  n$ is odd.   (A strong negation of NIP.)   Then $S_\phi(M)$ carries a Boolean algebra structure:
 for any $p,q \in S_\phi(M)$ there exists a unique $r \in S_\phi(M)$ with $\phi(x,b) \in r$ iff $\phi(x,b) \in p \& \phi(x,b) \in q$,
 and likewise for the other Boolean connectives; they are all described by basic $\LS$-formulas; these formulas
 will define a Boolean algebra structure on any e.c. model of the universal primitive theory of $S_\phi(M)$.  Any {\em compact} model for the pp topology (such as $\co(T)$ below) will in fact be a complete Boolean algebra.  
 \end{example}

 \begin{lem} \label{3.5} \begin{enumerate}
 \item   If $M, N \models T$, $M \leq N$, the restriction map  $r_{N,M}: S(N)\to S(M)$ is  an $\LS$- homomorphism.  \item  Let $u$ be an ultrafilter, $M^u$ the ultrapower of $M$.  There exists a canonical ultrapower
 map $j_u: S(M) \to S(M^u)$; it is an  $\LS$-embedding.
 \item  If $M, N \models T^{\pm}$, then in (1),  $r_{N,M}$   admits a section, i.e. 
  a homomorphism $j: S(M) \to S(N)$ with $r \circ j = Id_{S(M)}$.
  
   \end{enumerate}  \end{lem}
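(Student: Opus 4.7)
My plan is to handle the three parts in order, with (3) the main task. For (1), the approach is immediate: since we restrict to quantifier-free formulas, the parameter formula $\alpha$ is quantifier-free, giving $\alpha(M) = M^y \cap \alpha(N) \subseteq \alpha(N)$. I would then observe that if $\R_t(p_1,\ldots,p_n)$ holds in $S(N)$, meaning no $a \in \alpha(N)$ satisfies $\phi_i(x_i,a) \in p_i$ for all $i$, then a fortiori no witness lies in $\alpha(M)$; and since $\phi_i(x_i,a) \in r(p_i)$ iff $a \in M^y$ and $\phi_i(x_i,a) \in p_i$, this is exactly $\R_t(r(p_1),\ldots,r(p_n))$, giving the $\LS$-homomorphism property.

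For (2), I would define $j_u(p)$ by declaring $\phi(x,[b]_u) \in j_u(p)$ iff $\{i : \phi(x,b_i) \in p\} \in u$, and use the ultrapower theorem for quantifier-free formulas to see this is well-defined. Injectivity is immediate since $j_u(p)$ restricted along the diagonal $\Delta : M \to M^u$ recovers $p$. For the $\LS$-embedding property, a witness $[a]_u \in \alpha(M^u)$ to $\neg\R_t$ at $(j_u(p_1),\ldots,j_u(p_n))$ gives, by the ultrapower theorem applied to the finitely many conditions $a_k \in \alpha(M)$ and $\phi_i(x_i,a_k) \in p_i$ holding simultaneously, a $u$-large set of indices $k$ at which $a_k$ witnesses $\neg\R_t$ for $(p_1,\ldots,p_n)$; conversely any $a \in \alpha(M)$ yields a witness $\Delta(a) \in \alpha(M^u)$.

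For (3), the plan is to obtain the section as a composition $j := \sigma^* \circ j_u$, where $\sigma^*$ is the restriction $\LS$-homomorphism from (1) induced by an $L$-embedding $\sigma : N \to M^u$ extending $\Delta : M \to M^u$. I would pick $u$ so that $M^u$ is $|N|^+$-saturated (a regular ultrafilter on a sufficiently large index set does it). Granted $\sigma$, the section property is automatic: $r_{N,M} \circ j = r_{M^u,M} \circ j_u = \Id_{S(M)}$, and $j$ is a homomorphism as a composition of (1) and (2). The substantive point, and what I expect will be the main obstacle, is producing the embedding $\sigma : N \to M^u$ fixing $M$. This is where the hypothesis $M, N \models T^\pm$ must be used: it should guarantee that the quantifier-free diagram of $N$ over $\Delta(M)$ is consistent with the theory of $M^u$ and hence realized there by saturation. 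In the Morley-ized/QEble case of the section's footnote, $M \leq N$ is elementary and the argument reduces to routine first-order saturation; in the general positive setting, one would invoke irreducibility of $T^\pm$ together with positive-type saturation of $M^u$ to amalgamate $N$ with $M^u$ over $M$ and pick up the desired embedding of $N$ into the ultrapower side.
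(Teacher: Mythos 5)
Your proposal is correct and matches the paper's proof closely: part (3) is obtained, exactly as in the paper, as the composition $i_* \circ j_u$ with $i : N \to M^u$ an embedding over $M$ and $i_*$ the pullback of types, and you merely fill in the (easy) detail the paper takes for granted of why such an $i$ exists, by choosing $u$ so that $M^u$ is $|N|^+$-saturated. For (2), your direct \L o\'s computation is equivalent to the paper's observation that the relations $\R_t$ on types $tp(b/M)$ are first-order definable in the pair $(M',M)$ and hence persist to ultrapowers.
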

\prf  (1) is clear.

So is (2):    if $M \prec M', b \in M'$ and $p=tp(b/M)$, let $j_u(p) = tp(b/M^u)$
where $b$ is identified diagonally with its image in the ultrapower $(M')^u$.  Note that the
 relations $\R_t(tp(b_1/M),\cdots,tp(b_k/M))$ are first-order definable in the pair $(M',M)$, hence persist.
 
 For (3)  let $i: N \to M^u$ be an   embedding over $M$,   let $i_*$ be the pullback by $i$  of types over $M^u$ to types over $N$, and
let $j= i_* \circ j_u$.  Then $r \circ j = d_* \circ j_u = Id_{S(M)}$.  \eprf

 \begin{cor}\label{onetheory} The primitive universal theory of $S(M)$  does not depend on the choice of model  $M \models T^{\pm}$.    \end{cor}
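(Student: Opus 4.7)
The plan is to deduce the corollary directly from \lemref{3.5} together with the joint embedding property of $T$. The underlying observation is that the primitive universal theory of an $\LS$-structure $A$ is determined by which pp-sentences are satisfied in $A$, and pp-sentences are preserved under homomorphisms; therefore, if $\LS$-homomorphisms exist in both directions between $S(M)$ and $S(N)$, the two structures automatically share the same primitive universal theory.

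First I would handle the case $M \leq N$ with both $M, N \models T^{\pm}$. Part (1) of \lemref{3.5} supplies the restriction homomorphism $r = r_{N,M} : S(N) \to S(M)$, and part (3) produces a section $j : S(M) \to S(N)$, which is again an $\LS$-homomorphism. Since pp-truth transfers along $r$ from $S(N)$ to $S(M)$ and along $j$ in the opposite direction, the pp-sentences valid in $S(M)$ and in $S(N)$ coincide, hence so do their primitive universal theories. (As a bonus, $r \circ j = \Id$ implies that $j$ not only preserves but reflects atomic $\LS$-formulas, so $j$ is an embedding; but homomorphism alone already suffices for the argument.)

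For the general case of two arbitrary models $M, N \models T^{\pm}$, I would use the irreducibility of $T$ to embed both into a common model. Concretely, by joint embedding there exists $P \models T$ with $M, N \leq P$; choose a sufficiently saturated $L$-extension $\bar P$ of $P$. Any such $\bar P$ realizes every pp-formula with parameters that is consistent with its $L$-diagram and with $T$, so $\bar P$ is existentially closed, hence $\bar P \models T^{\pm}$. Now $M, N \leq \bar P$ are both inclusions of $T^{\pm}$-models, so the previous paragraph applies twice, yielding
\[ Th_{p\forall}(S(M)) \;=\; Th_{p\forall}(S(\bar P)) \;=\; Th_{p\forall}(S(N)). \]

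The potentially delicate step is arranging the common $T^{\pm}$-extension $\bar P$: one must check that the inclusions $M \hookrightarrow P$ and $N \hookrightarrow P$ coming from joint embedding are genuine substructure inclusions (automatic, since we are working in the quantifier-free fragment and universal theories are preserved downward), and that saturation suffices to upgrade $P$ to an e.c. model. Both facts are standard, so the only real content of the corollary is the availability of the section in \lemref{3.5}(3), which is what does the work.
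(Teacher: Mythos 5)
Your proof is correct, and it is essentially the paper's argument after one level of unfolding: Lemma~\ref{3.5}(3), which you invoke to get a section of the restriction map, is itself proved by the ultrapower argument that the paper's proof calls on directly via Lemma~\ref{3.5}(2). So both proofs ultimately hinge on the same two facts: pp-sentences transfer forward along $\LS$-homomorphisms, and the ultrapower/restriction pair supplies homomorphisms in both directions between $S(M)$ and $S(M^*)$.

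Two small inefficiencies in your write-up are worth noting, though neither affects correctness. First, the passage through a ``sufficiently saturated'' $\bar P$ is superfluous: once you have a joint embedding $P$ of $M$ and $N$ with $M,N\models T^{\pm}$, the universal theory of $P$ is already pinned down, since $Th_\forall(P)\subseteq Th_\forall(M)=T_\forall$ (universal sentences descend to substructures) and $T_\forall\subseteq Th_\forall(P)$ trivially; hence $P\models T^{\pm}$ on the nose and you may apply your first paragraph to $M\le P$ and $N\le P$ directly. Second, the clause ``so $\bar P$ is existentially closed'' is neither needed (membership in $Mod(T^{\pm})$ is weaker than being e.c.\ and is all you use) nor clearly established in this generality, since $T$ is merely an irreducible universal theory here and ordinary first-order saturation of an elementary extension does not by itself yield existential closedness for $T$. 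Dropping both remarks leaves a clean proof. The paper itself is slightly more economical: it embeds $N$ into an ultrapower $M^*$ of $M$, reads off $Th_{p\forall}(S(N))\subseteq Th_{p\forall}(S(M^*))=Th_{p\forall}(S(M))$ from Lemma~\ref{3.5}(1,2), and appeals to symmetry, avoiding the explicit construction of a common extension.
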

 \prf  Let $M, N \models T^{\pm}$.  Then $N$ embeds into an ultrapower $M^*$ of $M$.
 By \lemref{3.5} (1,2) we have $Th_{p\forall}(N) \subset Th_{p\forall}(M^*) = Th_{p\forall}(M)$.
 \eprf 
 
  One can also see this directly - if $\CT \models \neg (\exists x) \bigwedge_{i=1}^m \R_t(x)$ then (as this is due to a finite inconsistency) the same is true in $S(M)$.  If restricting to a set of formulas $\gamma$, it suffices to have  $Th_{\forall}(M)=T_\forall$ in the parameter sorts of  $\g$.
  
 \begin{defn} \label{put}  The {\em theory of $T$-patterns} is the  common  primitive universal theory of all type spaces $S(M)$.  It will be denoted by ${\TS}$. \end{defn}
   
     It is easy to write down the axioms of ${\TS}$ explicitly.    For instance, 
   $(\forall \xi) \neg \R_\phi(\xi)$ 
 will be an axiom of ${\TS}$ iff for some $\theta(u_1,\ldots,u_n)$, 
 
 \[T^{\pm} \models (\exists u_1,\ldots,u_l)\theta \&  (\forall x)(\forall u_1,\ldots,u_n)(\theta \implies  \  \bigvee_j  \   \phi(x,u_j))\]
 In other words, the definable partial type $\{\phi(x,u): u \in \alpha(M)\}$ is inconsistent, for 
 any model $M \models T$.  
 

\begin{remark} ${\TS}=\TS(T)$ varies continuously with $T^{\pm}$, in the sense that if $\si \in \TS(T)$ then there
exists a finite $T_0 \subset T^{\pm}$ such that for all $T'$ with $T_0 \subset (T')^{\pm}$ we have $\si \in \TS(T')$.  
\end{remark}

   \begin{lem} \label{compact} Let $A \models T^{\pm}$.  
  Then any 
  model of  ${\TS}$
 admits a homomorphism into $S(A)$.   In particular if $E$ is an e.c. model of $\TS$, then $E$ admits an embedding into $S(A)$. \end{lem}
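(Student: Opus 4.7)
The plan is to construct $f \colon N \to S(A)$ by a compactness argument in the underlying first-order language $L$, leveraging the fact that $\TS$ coincides with $\mathrm{Th}_{p\forall}(S(A))$ (\corref{onetheory}).

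For each $\xi \in N$ of sort $S_\gamma$, I introduce a fresh variable $x_\xi$ in the distinguished sort of $\gamma$, and set up the $L_A$-partial type
\[
\Sigma \;:=\; \Bigl\{\, \bigvee_{i=1}^k \neg\phi_i(x_{\xi_i}, b) \;:\; N \models \R_t(\bar\xi),\; t=(\phi_1,\ldots,\phi_k;\alpha),\; b \in \alpha(A) \,\Bigr\}.
\]
Any simultaneous realization $(a_\xi)_{\xi \in N}$ of $\Sigma$ in an extension $A \leq A^* \models T$ will yield the desired homomorphism $f(\xi) := \operatorname{tp}_\gamma(a_\xi / A) \in S_\gamma(A)$: the clauses of $\Sigma$ are tuned precisely so that each $\R_t$-fact of $N$ forces the corresponding $\R_t$-fact on $f(\bar\xi)$ in $S(A)$.

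The heart of the argument is the verification that $\Sigma \cup T \cup \mathrm{diag}(A)$ is finitely consistent. A finite fragment collects facts $N \models \R_{t_\ell}(\bar\xi_\ell)$, $\ell = 1,\ldots,m$, supported on a finite $J_0 \subset N$, together with finitely many parameters $b_{\ell,j} \in \alpha_\ell(A)$. Their conjunction is witnessed in $N$ by the $\xi$'s themselves and amounts to a single pp-sentence of $\LS$; by \corref{onetheory} this sentence is therefore true in $S(A)$ and produces types $(p_\xi)_{\xi \in J_0}$ in $S(A)$ satisfying $\R_{t_\ell}(\bar p_{\bar\xi_\ell})$ for every $\ell$. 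Realizing these finitely many types simultaneously in a sufficiently saturated extension of $A$ gives tuples $(a_\xi)$; unpacking the definition of $\R_{t_\ell}(\bar p_{\bar\xi_\ell})$ shows that no $b \in \alpha_\ell(A)$ satisfies $\bigwedge_i \phi_{\ell,i}(a_{\xi_{\ell,i}}, b)$, which is precisely the content of the finite fragment. (Alternatively, one can sidestep saturation entirely by taking an ultrafilter limit, over the directed set of finite $I_0 \subset N$, of pp-realizations $\bar r_{I_0}$ of $(\xi)_{\xi \in I_0}$ inside $S(A)$ itself, using topological compactness of the Stone sorts $S_\gamma(A)$ and closedness of the $\R_t$-loci.)

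Compactness then produces a realization of $\Sigma$ in a single extension $A^* \geq A$ modelling $T$, and $f$ is an $\LS$-homomorphism by construction. For the ``in particular'' claim, $S(A) \models \TS$ by definition, so any homomorphism from an e.c. model $E$ of $\TS$ into $S(A)$ is automatically an embedding by \remref{2.2}(1). I expect the main obstacle to be the translation between the $\LS$-level pp-fact of $N$ and the $L$-level partial type $\Sigma$ over $A$; once the identification $\TS = \mathrm{Th}_{p\forall}(S(A))$ is in play, every other step is routine.
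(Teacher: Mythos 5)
Your proof is correct, and it takes a genuinely different route from the paper's. The paper works entirely inside the type space: it equips the space of sort-preserving functions $E \to S(A)$ with the product (pointwise convergence) topology, notes that each $\R_t$-locus is closed in $S(A)$, observes that the set of functions respecting any finite batch of $\R_t$-facts is therefore closed and (by pp-sentence transfer) nonempty, and concludes by Tychonoff compactness that some function respects them all. You instead descend to the level of $L$: you write a partial type $\Sigma$ in variables $x_\xi$ over $A$ whose realization in some $A^* \geq A$ directly encodes a homomorphism via $\xi \mapsto \operatorname{tp}_\gamma(a_\xi/A)$, and you establish finite satisfiability of $\Sigma$ using the same pp-transfer fact plus a realization step, finishing by first-order compactness. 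Both are compactness arguments, but at different levels: yours runs compactness of $L$ over $A$, the paper's runs Tychonoff compactness of $S(A)^E$. The one spot where your main route is slightly heavier is the phrase ``realizing these finitely many types simultaneously in a sufficiently saturated extension of $A$'': this is free when $T$ is QEble/Morleyized (the default case), but in the fully general irreducible-universal setting it would call for amalgamation over $A$, which the paper's route sidesteps by never leaving $S(A)$. Your parenthetical ``alternatively'' (ultrafilter limit over finite $I_0 \subset N$, using compactness of the Stone sorts and closedness of the $\R_t$-loci) is exactly the paper's argument, so you clearly had both paths in view; the ``in particular'' clause via \remref{2.2}(1) matches the paper verbatim.
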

  \prf     Let $S=S(A)$, made into an $\LS$-structure by the natural interpretation of $\R_t$. 
    Consider the space
      of  sort-preserving  functions $E \to S $, with pointwise convergence topology, relative to the topology of $S$.
        Since $S$ is (on each sort) compact, the space of functions
 is compact.  The subspace of functions preserving finitely many given instances of the   relations $\R_t$ is closed, and non-empty since any {pp} sentence  true in $E$ is is true in $S$.   Hence  a map exists preserving all instances of all relations $\R_t$.
     This is a homomorphism, and in case $E$ is e.c. it must  be an embedding.
\eprf

  
The argument of \lemref{compact} was given  in  \cite{mycielski} for general compact topological algebras,   generalizing earlier results in the theory of modules.    
   
  It follows from \lemref{compact} that ${\TS}$ is ec-bounded.  
Thus by the results of \secref{1}, a unique universal e.c. model of ${\TS}$ exists.  

\begin{defn}  The core of $T$, $\co(T)$ is  the universal e.c. model of ${\TS}$.  \end{defn}

When $T$ is fixed, $\co(T)$ will  be denoted by ${\CU}$.   

 We view $ \CU$ as an 
$\LS$-structure; it is thus endowed also with the pp topology.   Likewise we give $G=\Aut(\CU)$
the topology described in \secref{pptop}.   Thus $\CU$ and $G$ are compact T1 spaces. 

  Let $\fg=\fg_{\CU}$ be the normal subgroup of
$G=\Aut(\CU)$ described in \secref{topg}.   Let $E_{\fg}$ be the equivalence relation on (each sort of)
$\CU$ given by $\fg$-conjugacy.   $E_{\fg}$ appears in general to be a complicated 
equivalence relation on $\CU$; the  visible complexity upper bound, when $|L|$ is countable,  is:  no worse than analytic.  But on each   atomic type we will see that it is closed.

The following proposition can be regarded as a  form of quantifier elimination.  It implies, in particular, that it is possible to compute the core separately for each sort.    Let us call a set $\g(x;y)$ of formulas  {\em full}  if it includes
all formulas $\theta(y)$ in  (any) parameter variables alone.

\begin{remark}  At the level of generality we are working with,  of irreducible universal theories,   the fulness assumption can easily be relativized.  Suppose $\g$ is a finite set of quantifier-free  formulas.  By a slight Morleyzation we can take them to be atomic; let $L_\g$ be the sublanguage of $L$ generated by $\g$.
Let $T_\g$ be the given  universal theory $T$, restricted to $L_\g$.  Then $T_\g$ is itself an  irreducible universal theory, and fulness of $\gamma$ is now tautological.\end{remark}

\begin{prop}  \label{qe}  Let $T$ be an irreducible  universal theory,
and consider the sort $\g$ of  $\LS,\TS$ for any full $\g$.   Then: \begin{enumerate}
\item For any pp formula $A(\mu)$ there exist atomic formulas $\Xi_k(\mu)$
 such that in  in ${\CU_\gamma}$, as well as in $S_\g(M)$ for any e.c. model $M$ of $T$, 
\[ A  \iff \bigwedge_k \Xi_k \]
Here $k$ ranges over an index set $K$ of cardinality at most $|L|$.
\item  If $T$ is the universal part of a complete first-order theory with QE, $K$ can be taken countable, and the fulness assumption on $\gamma$ can be restricted
to any given (finite) family of parameters sorts.  
\item 
  $\CU$ is homogeneous for atomic types.  
  \item  An atomic type of $\TS$ is the type of an element of  $\CU$ if and only if
is maximal.  
\item   $\CT$   admits   elimination of finite conjunctions, at least if models of $T^{\pm}$ have more than one element.
\item If $\g$ is the Boolean closure of a finite set with a set formulas in the parameter variables alone,
then   equality  can be defined in terms of the basic symbols $\R_t$.  
\end{enumerate}
\end{prop}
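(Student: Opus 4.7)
The overall plan is to establish (1) as the core quantifier-elimination result, from which (2)--(4) follow formally and (5)--(6) follow by direct parameter manipulation. For (1), let $\mathcal K$ be the set of atomic $\LS$-formulas $\Xi(\mu)$ with $\TS \models A \to \Xi$, and show $A(\mu) \iff \bigwedge_{\Xi \in \mathcal K}\Xi(\mu)$ in $\CU_\g$ and in $S_\g(M)$ for every e.c.\ $M \models T$. The forward direction is trivial. For the converse, take $p$ satisfying every $\Xi \in \mathcal K$; write $A(\mu) = (\exists \xi)\bigwedge_{j \in J}\R_{t_j}(\mu,\xi)$ with $J$ finite, and seek a witness $\xi$ in $\CU_\g$, which is saturated by \propref{prop1}. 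A witness exists iff the pp partial type $\{\R_{t_j}(p,\xi) : j \in J\}$ is finitely satisfiable in $\xi$; if some finite subconjunction is unsatisfiable, then its universal negation is a primitive-universal consequence of $\TS$, and by bundling parameter sorts (as in part (5) below) it converts to a single atomic consequence $\Xi_0(\mu) \in \mathcal K$ whose failure at $p$ contradicts the choice of $p$. The size bound $|K| \leq |L|$ comes from counting: each $\R_t$ is indexed by a finite tuple of $L$-formulas together with a parameter formula, giving at most $|L|$ atomic formulas of any given arity.

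For (2), the QE hypothesis makes the atomic $\R_t$'s on the relevant finitely many parameter sorts countable up to logical equivalence, so the enumeration of $\mathcal K$ collapses to a countable index. Part (3) is immediate from (1): two tuples $\bar a, \bar b$ in $\CU$ with the same atomic $\LS$-type share the same pp-type by (1), so $\bar a \mapsto \bar b$ is a pp-preserving partial isomorphism between substructures of the universal e.c.\ model $\CU$, which a standard back-and-forth using \propref{prop1} extends to a full automorphism. Part (4) combines (3) with universality: a maximal atomic $\LS$-type is realized in some e.c.\ model of $\TS$ and embeds into $\CU$ by universality, giving a realization; conversely, any atomic type realized in $\CU$ is maximal, since any strictly larger consistent atomic extension could be realized at another point of $\CU$, contradicting the e.c.\ property via \remref{2.2}(1).

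Part (5) is handled by parameter bundling. Given $\R_{t_1}(\mu) \wedge \R_{t_2}(\mu)$ with $t_i = (\vec\phi_i;\alpha_i)$, the fulness of $\g$ supplies a joint parameter formula on $\alpha_1 \sqcup \alpha_2$ together with a combined pattern $t$ extending each $\phi_i$ on $\alpha_i$, so that the single $\R_t$ expresses the conjunction; the two-element caveat merely excludes degenerate cases where the disjoint union collapses. Part (6) is then immediate: the example introducing $E_\phi \equiv \R_{(\phi,\neg\phi)}\wedge \R_{(\neg\phi,\phi)}$ (preceding \lemref{3.5}) already exhibits $\phi$-type-agreement as atomic via (5), and equality on $S_\g$ is $\bigwedge_{\phi \in \g}E_\phi$, which reduces to a finite conjunction (hence to a single atomic formula by (5) again) when $\g$ has a finite Boolean generating set augmented with parameter-only formulas. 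The main obstacle is the converse of (1); the delicate point is that finite inconsistency of $\{\R_{t_j}(p,\xi)\}$ may arise from joint combinatorial obstructions that are invisible in any single $\R_{t_j}$, so the proof of (1) must internally invoke the bundling of (5) in order to convert each finite inconsistency into a single atomic consequence of $A$ in the variables $\mu$ alone.
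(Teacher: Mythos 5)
Your argument for part (1) has a genuine gap at the key step of the converse direction, and since (2)--(4) are derived from (1), the gap propagates. You take $p$ satisfying every atomic consequence $\Xi \in \mathcal K$ of $A$, suppose $A(p)$ fails, and (after unwinding the finitary existential) observe that the conjunction $\bigwedge_{j}\R_{t_j}(p,\xi)$ has no solution $\xi$ in $\CU_\g$. You then assert that ``its universal negation is a primitive-universal consequence of $\TS$'' which, after bundling, produces an atomic $\Xi_0(\mu) \in \mathcal K$ failing at $p$. That step is a non sequitur: the unsatisfiability of $\bigwedge_j\R_{t_j}(p,\xi)$ is relative to the (possibly infinitely described) atomic type of $p$, not a consequence of $\TS$ alone; if the universal negation of $\bigwedge_j\R_{t_j}(\mu,\xi)$ were a $\TS$-consequence, then $A$ itself would be $\TS$-inconsistent. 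Moreover, even if you could isolate finitely many atomic constraints $B_1(\mu),\ldots,B_r(\mu)$ on $p$ making the system unsatisfiable, this would give $\TS\models(\forall\mu)(\bigwedge B_i \to \neg A)$, i.e.\ $\neg A$ as a consequence of finitely many atoms, which is the wrong direction (you need $A \to \Xi_0$ with $\Xi_0$ atomic and $p\not\models\Xi_0$). Bundling \`a la (5) cannot rescue this, since (5) only packages finite conjunctions of atomics into a single atomic $\R_t$; it does not eliminate the existential quantifier over $\xi$.

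The paper's actual argument avoids this by working semantically in $S_\g(M)$, where the $\R_t$ have their concrete ``omission'' meaning. It first reduces (via (5)) to a single $\R_\phi$ in the normal form $A(\mu)\equiv(\exists\xi)(\R_\phi(\xi)\wedge\pi(\xi)=\mu)$, then observes that $A(p)$ holds in $S_\g(M)$ iff $p(x)\cup\{\neg\phi(xy,b):b\in M\}$ is consistent with $T$ and the diagram of $M$. Compactness of first-order logic (not of $\LS$) then converts ``inconsistency'' into an explicit, $L$-indexed family of atomic obstructions $\Xi_{m,\theta}=\R_{\gamma,\ldots,\gamma;\theta}$, yielding $A\iff\bigwedge_{m,\theta}\Xi_{m,\theta}$ on $S_\g(M)$. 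Finally, \lemref{u}(2) transfers the infinitary biconditional from the compact type space $S(M)$ to the e.c.\ model $\CU$. This concrete analysis is what supplies the atomic formulas $\Xi_k$; your abstract enumeration of $\mathcal K$ cannot replace it, because nothing a priori forces $\mathcal K$ to be strong enough to pin down $A$.

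Two smaller remarks. Your proof of (4) is also shaky: ``a strictly larger consistent atomic extension could be realized at another point of $\CU$, contradicting the e.c.\ property'' does not produce a contradiction as stated (having two distinct points with comparable atomic types violates nothing). The paper's proof of (4) goes through (1): any pp $\psi$ not holding of $a$ is refuted by some pp $\phi$ holding of $a$; rewriting $\phi$ as an atomic conjunction by (1) gives the atomic consequence of the atomic type of $a$ that contradicts $\psi$. For (5), your direct-sum/disjoint-union of parameter sorts is not the paper's device (which is a case distinction on $v=v'$ for two fresh variables $v,v'$, hence the two-element caveat), and it is not obviously correct: if one $\alpha_i$ is empty, the combined parameter formula is also empty, making $\R_t$ vacuous while the conjunct $\R_{t_{3-i}}$ need not hold. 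Your sketch of (6) from (5) is fine in spirit, as is (3) once (1) is available, though the back-and-forth is unnecessary given the pp-homogeneity already established before \propref{prop1}.
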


\begin{proof}

 Let us  first consider the the case of $S_\gamma(M)$ where $\gamma$ is a finite set of formulas, along with 
 formulas in the parameter variables alone.   
  For simplicity (and using a standard trick, without loss of generality) assume $\gamma(x,u)$ is a single formula.

  We can write $A(\mu) $ in `normal form' as 
\[ A(\mu) \equiv (\exists \xi)(\R_\phi(\xi)  { \wedge } \pi(\xi) = \mu),\]
 where $\pi$ is the coordinate projection $S_{xy} \to S_x$,
 and $\R_\phi$ asserts that $\phi(xy,u)$ is not represented in $\xi$.  (See (5).)  Now given $p(x)$,
 there exists $q  \in S_{\g} \union {\phi}(M)$ extending $p(x)$ and omitting $\phi(xy,u)$ {\em unless} 
  $p(x) \union \{\neg \phi(xy,b): b \}$ is inconsistent with $T$ and the quantifier-free diagram of $M$;
   i.e. for some $c=(c_1,\ldots,c_{m})$,  $b=(b_1,\ldots,b_l)$,$e$ from $M$ and $\theta \in L$, 
 we have $\g(x,c_i) \in p$ for each $i$, $M \models \theta(c,b,e)$ and
\[T \models (\forall x,y,v,u,w) \neg ( \bigwedge_{i=1}^{m} \g(x,v_i) \wedge \bigwedge_{j=1}^l \neg \phi(xy,u_j) 
\wedge \theta(u,v,w) )\]


 Then 
 $S (M) \models   (\exists \xi)(\R_t(\xi)  { \wedge } \pi(\xi) = p)$ iff for each such $\theta$, 
 \[ \g(x,v_1)  { \wedge } \cdots  { \wedge } \g(x,v_m)  { \wedge } \theta(u,v,w) \]
 is omitted in $p$.   Let $\Xi_{m,\theta} = \R_{\gamma,\cdots,\gamma; \theta}$.    
  
 Then we have shown that
 \[ S (M) \models A(\mu) \iff \bigwedge_{m} \Xi_{m}(\mu) \]
 
 In case $T$ is the universal part of a complete first order theory with QE, $\theta$ may be taken to be a quantifier-free
 formula equivalent to $  (\forall x)(\forall y) \neg  ( \bigwedge_{i=1}^{m} \g(x,v_i) \wedge \bigwedge_{j=1}^l \neg \phi(xy,u_j) )$; thus the $\bigwedge$ above can be taken to range over a countable set, regardless of the cardinality of the language.
 
This concludes (1,2) in the case of finite $\g$, for $S_\g(M)$.


 The case of  an arbitrary $\gamma$ follows, since we will have  $(\exists \xi)(\R_t(\xi)  { \wedge } \pi(\xi) = \mu)$
 iff for every finite $\gamma' \leq \gamma$, letting $\mu_{\gamma'} $ be the restriction of $\mu$ to
 $\gamma'$, $(\exists \xi)(\R_t(\xi)  { \wedge } \pi(\xi) = \mu_{\gamma'})$.
 
 Using the compactness of $S(M)$, \lemref{u} (2) shows that the infinitary equivalence above is also valid in $\CU$.  

It was already noted in \secref{saturated} (just above \propref{prop1}) that $\CU$ is homogeneous for pp types.  Thus (1) implies (3).

Next we show maximality of the atomic types of elements of $\CU$.   
Let $P$ be the atomic type   of $a$ in $\CU$.  Consider
any pp formula $\psi$ not true of $a$ in $\CU$.  Then (since $\CU$ is e.c.) some pp formula $\phi$ is true of $a$ and contradicts
$\psi$ in models of $\CT$.  By the above, $\phi$ is equivalent to $\bigwedge \Xi_k$, with $\Xi_k$ atomic.   Each $\Xi_k$
must be in $P$; and some finite conjunction $\Xi$ of the $\Xi_k$ must contradict $\psi$ (otherwise
realize $\psi  { \wedge } \bigwedge \Xi_k$ in some elementary extension, and retract to $\CU$.)  So $\neg \psi$
follows from $P$.  Hence $P$ is maximal; no $\psi$ can be properly added to it.      The converse, that a maximal atomic type is represented in $j$, is clear since it is realized by some tuple $a$ in some $A \models \CT$ and
there exists a homomorphism $A \to \CU$, which must  by maximality be an embedding on $a$.    Thus (4).

(5) E.g.  $(p,p')$
 omits $\psi(x,u) \& \psi'(x',u)$ {\em and} omits $\phi(x,u) \& \phi'(x',u)$ iff
 $(p,p')$ omits $\theta(x,u,v,v') \& \theta'(x,u,v,v')$, where $v,v'$ are additional variables, 
 and $\theta$ agrees with $ \phi$ if $v=v'$, with $\psi$ if $v \neq v'$; and similarly $\theta'$.  
 
 (6)  If $\g$ is generated by the single formula $\gamma$ along with parameter formulas, as we may assume,
 then $p=q$ in any type space, and hence in the core, if and only if $\gamma(x,a) \in p \& \neg \gamma(x,a)$
 is omitted, and dualy.
\end{proof}

\ssec{Duality}

Let $T$ be a universal theory, and $M$ a universal domain, i.e. a highly qf- saturated and qf-homogenous model of $T^{\pm}$.    Existence of $M$ is equivalent to $T$ being Robinson, i.e. $Mod(T)$ admitting amalgamation
under embeddings.
Types over $M$ will be referred to as global types, and  types means: qf types.

\begin{prop} \lbl{duality1} Let  $A \leq M$ and let $B \models T$.       Let $b$ enumerate $B$.
There is a canonical 1-1 correspondence between:
\begin{itemize}
\item $\LS$-homomorphisms $h: S(A) \to S(B)$.
\item  Extensions of $tp(b/\emptyset)$ to a global type,   finitely satisfiable in $A$.
\end{itemize}
This is also valid locally for $\g$-types,  with $\g(x,y)$ closed under Boolean combinations, and $\LS$ restricted
to the formulas $\R_t$ with $t=(\phi_1,\ldots,\phi_n)$, $\phi_i \in \g$.  
\end{prop}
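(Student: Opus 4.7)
My plan is to construct both directions of the bijection explicitly and verify they are mutually inverse.

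\emph{Forward direction ($q \mapsto h$).} Given a global type $q(y)$ extending $tp(b/\emptyset)$ and finitely satisfiable in $A$, I first recall that finite satisfiability in $A$ forces $q$ to be $A$-invariant: if $tp(c/A)=tp(c'/A)$ with $\phi(y,c) \in q$ and $\phi(y,c') \notin q$, then $\phi(y,c) \wedge \neg\phi(y,c') \in q$ would be witnessed by some $a \in A$, contradicting equality of types. Fix a realization $b^* \models q$ in some extension $M^+ \supseteq M$, so $tp(b^*/\emptyset)=tp(b/\emptyset)$. Define $h$ sortwise: for $p \in S_v(A)$, any realization $a' \models p$ in $M$, and any formula $\phi(v, b')$ with $b' \subseteq b$,
\[
  \phi(v, b') \in h(p) \iff M^+ \models \phi(a', b^{*'}),
\]
where $b^{*'} \subseteq b^*$ is the subtuple corresponding to $b'$. $A$-invariance of $q$ makes this independent of the chosen $a'$, and $h(p)$ is automatically a complete type in $S_v(B)$. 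To verify that $h$ preserves $\R_t$ for $t=(\phi_1,\ldots,\phi_n;\alpha)$: if the pattern is represented at $(h(p_1),\ldots,h(p_n))$ by some $b' \in \alpha(B)$, pick $a'_i \models p_i$ in $M$; then $\alpha(y^{b'}) \wedge \bigwedge_i \phi_i(a'_i, y^{b'}) \in q$, so finite satisfiability yields $a \in \alpha(A)$ with $\phi_i(x_i, a) \in p_i$ for each $i$, representing $t$ at $(p_1,\ldots,p_n)$ in $S(A)$.

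\emph{Reverse direction ($h \mapsto q$).} Given $h$, I will construct $b^* \in M^+ \supseteq M$ with $tp(b^*/\emptyset)=tp(b/\emptyset)$ satisfying the prescription
\[
  M^+ \models \psi(b^{*'}, c) \iff \psi(b', v) \in h(tp(c/A))
\]
for every $c \in M^v$ and every formula $\psi(y^{b'}, v)$, and set $q := tp(b^*/M)$. By qf-saturation of a large extension of $M$ it suffices to prove consistency of any finite subset of these conditions. Bundling parameters into a single tuple $c$ with $p := tp(c/A)$ reduces the task to the following: given $\chi(c, y^{b''}) \in h(p)$, find a tuple $b^{*''}$ with $tp(b^{*''}/\emptyset)=tp(b''/\emptyset)$ in some extension of $M$ making $\chi(c, b^{*''})$ true. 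Realize $h(p)$ by some $v^* \in B^+ \supseteq B$ to obtain $\chi(v^*, b'')$ in $B^+$. Because $h$ preserves the parameter-free relations $\R_\phi$ (\exref{emptyset}), both directions, one concludes $tp(v^*/\emptyset)=tp(c/\emptyset)$; the Robinson amalgamation property then allows one to amalgamate $B^+$ with $M$ over the identification $v^* \leftrightarrow c$, and the image of $b''$ in the amalgam is the required $b^{*''}$.

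Finite satisfiability of $q$ in $A$ is then immediate from the homomorphism property applied to a unary $\R_\phi$: if $\psi(y^{b'}, c) \in q$, then $\psi(b', v) \in h(tp(c/A))$ exhibits $b' \in B$ as a witness to $\neg\R_{\phi}$ at $h(tp(c/A))$ in $S(B)$, and $h$-preservation transfers a witness $a \in A$ to $\neg\R_{\phi}$ at $tp(c/A)$, giving $\psi(a, c)$ in $M$. That $q$ extends $tp(b/\emptyset)$ is built into the construction (take $c$ empty). The two maps are mutually inverse by tracing a single formula through both definitions: in either direction the realization $b^*$ furnishes the matching data. The local version for a Boolean-closed $\gamma$ is identical, with closure of $\gamma$ used precisely so that the consistency step can absorb conjunctions. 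The principal obstacle is the amalgamation step in the reverse direction, where the Robinson hypothesis is essential and one must be careful that the amalgam embeds into a sufficiently saturated extension of $M$ so that $b^*$ is genuinely a tuple over $M$ with the prescribed qf-type.
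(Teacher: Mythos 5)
Your proof is correct, and the forward direction ($q \mapsto h$) is essentially the paper's construction (the paper writes $h(q)=\{\phi(x,b):(d_qx)\phi\in p\}$, which unpacks to exactly your realization-based definition). The genuine difference is in the reverse direction ($h\mapsto q$). The paper simply writes down the candidate type $p(y)=\{\phi(a,y): \phi(x,b)\in h(\mathrm{tp}(a/A))\}$ and proves in a single stroke that it is finitely satisfiable in $A$, by applying preservation of $\R_{(\phi_1,\ldots,\phi_n)}$ to the putative inconsistency; this simultaneously yields consistency, completeness, and finite satisfiability, with no need to produce a realizing tuple. You instead first \emph{construct} a realization $b^*$ via Robinson amalgamation (realizing $h(p)$ by $v^*$, matching qf-types over $\emptyset$ using \exref{emptyset}, amalgamating over $\langle v^*\rangle\cong\langle c\rangle$), and only afterward deduce finite satisfiability in $A$ from the homomorphism property. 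The amalgamation step is logically correct under the Robinson hypothesis in force in this subsection, but it is redundant: your final paragraph already shows the prescription is finitely satisfiable in $A$, and this alone gives consistency, so the candidate type can be defined directly as in the paper with no appeal to amalgamation. Beyond this, minor notational slips (writing $\R_\phi$ where $\R_\psi$ is meant, and the mixed-up variable order $\chi(c,y^{b''})\in h(p)$) do not affect the argument, and the observation that finite satisfiability forces $A$-invariance correctly supplies the well-definedness needed for $h$.
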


\prf  Let $h: S(A) \to S(B)$ be an $\LS$-homomorphism.  Define a global type $p(y)$:
\[ \phi(a,y) \in p(y) \iff  \phi(x,b) \in h(tp(a/A)) \]
If $\phi_i(a_i,y) \in p(y)$ for $i=1,\ldots,n$, let $q_i = tp(a_i/A)$, and let $t=(\phi_1,\ldots,\phi_n)$;
suppose $\bigwedge_i \phi_i(a_i,y)$ is not satisfiable in $A$; then $S(A) \models \R_t(q_1,\ldots,q_n)$;
so $S(B) \models \R_t(hq_1,\ldots,hq_n)$; but $\phi_i(x,b) \in hq_i$ for each $i$, a contradiction.

Conversely, let $p(y)$ be an extension of $tp(b/\emptyset)$ to a global type,   finitely satisfiable in $A$.  
In particular, when $a,a'$ realize the same type $q$ over $A$, $\phi(a,y) \& \neg \phi(a',y)$ cannot be in $p$;
so we can define $(d_q x)\phi(x,y) \in p$ to hold iff $\phi(a,y) \in p$ for some/all $a \models q$.   
Define $h:S(A) \to S(B)$ by:
\[ h(q) = \{\phi(x,b): (d_q x)\phi(x,y)  \in p(y) \}  \]
If $S(A) \models \R_t(q_1,\ldots,q_n;\alpha)$, $q_i=tp(a_i/A)$, $t= (\phi_1(x_1,y),\cdots,\phi_n(x_n,y))$, then 
there is no $b \in \alpha(A)$ with $\phi_i(a_i,b)$.   As $p$ is  finitely satisfiable in $A$, it is not the case that
each $\phi(a_i,y) $ is in $p$, where $y$ is a variable corresponding to a finite tuple $b_1$ of coordinates of $b$,
with $\alpha(b_1)$.     Thus $S(B) \models \R_t(h(q_1),\ldots,h(q_n))$.

\eprf

 
\begin{rem}\lbl{dual2}
 Composition of homomorphisms  corresponds by duality to   an operation on invariant  types, related to tensor product.  Consider a $b$-invariant type $p_b$,
 and an $a$-invariant type $q_a$.   We define a third $a$-invariant type $r_a$.  Namely let $a \subset E$; to define $r_a |E$, let $b \models q_a |E$,
 and $c \models p_b | E \union \{b\}$; let $r_a |E = tp(c/E)$.     When $p_b$ is finitely satisfiable in $b$ and $q_a$ in $a$,     it is easy to see that $r_a$ is
 also finitely satisfiable in $a$.    
 
 In terms of this product, one can characterize minimal retractions $S(M) \to S(M)$, and so carry out the whole theory on the dual level.
 
  \end{rem}

\ssec{Expansion with definable patterns}

 We repeat the statement of \propref{expand} in more detail.

 Let ${T}$ be an irreducible  universal theory, $V$ a distinguished sort
  and  $\g$   a set of formulas on $V \times P$, for various parameter sorts $P$, closed under negations.   We view
  products of parameters sorts as parameter sorts themselves.
  
  We consider irreducible universal theories $T'$ expanding $T$ by new relations on the parameters sorts.
By an {\em interpretation  of $T'$ in $T''$ over $T$}, we mean here a map $\alpha$ of quantifier-free formulas of $T'$
on parameter sorts, into quantifier-free formulas  in the same variables for $T''$, compatible with change of variables and finite Boolean combinations, and such that $T' \models (\forall u) \psi$ iff $T'' \models (\forall u) \alpha(\psi)$; and $\alpha(\phi)=\phi$
for any quantifier-free formulas $\psi$ of $L'$ and $\phi$ of $L$.   The notion of composition of interpretations over $T$ is clear;
we thus have a category $\mathcal{C}_T$.
In this setting, a bi-interpretation is simply an interpretation with a 2-sided inverse.

  \begin{prop} \label{expandd}     (=\propref{expand}.)
  There exists a unique minimal expansion $T\td_\g$ of ${T}$ 
that   has definable patterns at $\g$.  \end{prop}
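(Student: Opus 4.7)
The plan is to construct $T\td_\g$ by adjoining to $L$ a predicate symbol naming each maximal pattern type of $T$ at $\g$. By \propref{qe}, the maximal pattern types on the relevant sort form a bounded set $\Lambda$ of cardinality at most $2^{|L|}$, in bijection with the elements of the corresponding sort of $\co(T)$. For each $p \in \Lambda$ (a maximal $L_V(X)$-universal theory extending $T$, with $X$ the auxiliary predicate on $V$), introduce a new predicate symbol $D_p$ on $V$, and impose the universal axioms $\psi[D_p/X]$ obtained by substituting $D_p$ for $X$ in each universal $L_V(X)$-sentence $\psi \in p$. The resulting universal theory $T\td_\g$ extends $T$ by declaring each pattern to be witnessed by the $0$-definable set $D_p$.

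The first step is to verify that $T\td_\g$ is consistent and irreducible. Given a sufficiently saturated $M \models T$, we must choose interpretations $D_p^M \subseteq V(M)$ simultaneously realizing each $p \in \Lambda$. For a single $p$, existence of such a realization (as an externally $\g$-definable set pulled back into $M$) is the content of \lemref{ramsey-basic}; coherent simultaneous realization follows by iterating the Zorn argument and using saturation, since different $p \in \Lambda$ correspond to independent parameter data and impose no mutual constraints on the $X$-predicates. Joint embedding for models of $T\td_\g$ then reduces to joint embedding for $T$ (which is irreducible by hypothesis), the $D_p$-expansions being independent in each factor.

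To check that $T\td_\g$ is totally definable at $\g$, observe that any maximal pattern type $p'$ of $T\td_\g$ on some $X'$ restricts to a maximal pattern type of the reduct $T$, hence coincides with the reduct of some $p \in \Lambda$. By the defining axioms of $T\td_\g$, the predicate $D_p$ itself realizes $p$ in every model, so maximality of $p'$ forces it to assert $X' = D_p$ (modulo a quantifier-free $L \cup \{D_q\}_q$-definable equivalence); this is a $0$-definable assertion in the extended language, so $p'$ is definable. For minimality: given any totally definable universal expansion $T'$ of $T$ at $\g$ in the category $\mathcal{C}_T$, each $p \in \Lambda$ is (by total definability of $T'$) realized by some parameter-free formula $\theta_p$ of $L'$, and the assignment $D_p \mapsto \theta_p$ defines an interpretation $T\td_\g \to T'$ in $\mathcal{C}_T$, uniquely determined by the choice of $\theta_p$ up to $T'$-equivalence. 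Uniqueness of $T\td_\g$ up to bi-interpretation in $\mathcal{C}_T$ follows formally from this universal property.

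The main obstacle is the irreducibility step: verifying that the collection $\{D_p : p \in \Lambda\}$ can be coherently realized as $0$-definable sets in a single model without the family of axioms collapsing. A priori, one might worry that distinct patterns $p \neq q$ impose contradictory constraints on a common formula, or that iterating over all $|\Lambda| \leq 2^{|L|}$ patterns requires a more elaborate compactness argument. The key point is that, by the defining universal property of $\Lambda$ as the patterns of $T$ (not of some larger expansion), each $D_p$ interacts with the $L$-structure independently, so the conjunction of axioms is equivalent to a product of independent realizability problems each solved by \lemref{ramsey-basic}; and saturation of $M$ accommodates arbitrarily many such independent witnesses simultaneously.
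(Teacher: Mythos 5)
Your construction is not the paper's, and the difference is not cosmetic: it breaks both the total definability claim and the minimality claim.

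The first issue is the sentence ``the maximal pattern types \ldots form a bounded set $\Lambda$ \ldots in bijection with the elements of the corresponding sort of $\co(T)$.'' This is false, and it is false in a way that the whole point of the core is designed to capture. What \propref{qe}(3,4) say is that the maximal atomic $\LS$-types are exactly the atomic types realized in $\CU$, and that $\CU$ is homogeneous for atomic types; so the maximal pattern types are in bijection with the \emph{orbits} of $\Aut(\CU)$, not with the elements of $\CU$. Distinct elements of $\CU$ can share a pattern type and be distinguished only by their joint $n$-ary $\LS$-relations with other elements (this is the content of the two-orderings example in the introduction, and of \exref{ziegler}). The paper's construction introduces a predicate $Q_{\phi;a}$ for each \emph{element} $a \in J = \co_\g(T)$, and reads the axioms off the relations $\R_{\phi_1,\ldots,\phi_n;\alpha}(a_1,\ldots,a_n)$ holding among \emph{tuples} of elements of $J$. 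Your construction has one predicate $D_p$ per orbit, and your axioms $\psi[D_p/X]$ (for $\psi \in p$) only record the unary information carried by each $p$ individually; the binary and higher-arity constraints $\R_t(a_1,\ldots,a_n)$, which say how distinct copies of the same or different patterns must relate, do not appear at all.

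Because the constraints are missing, the remaining steps do not go through. In the total definability argument, ``maximality of $p'$ forces it to assert $X' = D_p$'' has no justification: a maximal pattern type of your $T\td_\g$ that restricts to $p$ over $L$ need not say anything about how $X'$ sits relative to $D_p$ beyond what the (too-weak) universal axioms of $T\td_\g$ enforce, and there is room for such a $p'$ to remain undefinable. The paper instead proves total definability by producing an $\LS$-isomorphism between (a copy of) $J$ inside $S_\g(M)$ and the new core $J'$, using a retraction $\rho$ and a section $s$; nothing in your argument replaces this. In the minimality step, the map $D_p \mapsto \theta_p$ must satisfy the two-sided condition $T\td_\g \models (\forall u)\psi \Leftrightarrow T' \models (\forall u)\alpha(\psi)$ of \secref{section-rel}'s notion of interpretation over $T$; but because your $T\td_\g$ lacks the joint relational axioms, $T'$ will in general prove universal sentences relating $\theta_{p_1},\theta_{p_2},\ldots$ whose preimages under $\alpha$ fail in your $T\td_\g$, so $\alpha$ is not an interpretation. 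Finally, the claim that irreducibility ``reduces to joint embedding for $T$, the $D_p$-expansions being independent'' is precisely what one cannot assume; the paper's proof of irreducibility goes through an embedding $j : J \hookrightarrow S_\g(M)$ into the type space of a sufficiently saturated e.c.\ model and uses the fact that the $Q_{\phi;a}$, being definitions of types finitely satisfiable in $M$, are simultaneously witnessed there. You need some version of that argument; independence is not automatic.

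In short: the right index set is $J$ itself, not $J/\Aut(J)$; the right axioms are read off the full $\LS$-diagram of $J$ (equivalently, $\mathrm{Mod}(T\td)$ is identified with pairs $(M,h)$ with $h \in \Hom_{\LS}(J, S_\g(M))$, as in equation~\eqref{exp}); and total definability and minimality are then established by manipulating $\LS$-homomorphisms and retractions between $J$, $J'$, and the ambient type spaces, not by pointwise maximality considerations.
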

  
More precisely,  let $\mathcal{C}\td_T$ be the full subcategory  of  of $\mathcal{C}_T$ consisting of those $T'$ that have definable patterns at $\g$.  Then $ \mathcal{C}\td_T$ has an object $T\td$ that 
maps into any other; and $T\td$ is unique up to bi-interpretation.    The
 bi-interpretation is
unique up to composition with a self-interpretation of $T\td$ over $T$.   
The self-interpretations of $T\td$ over $T$ form a group, isomorphic to $\Aut(\co(T))$.
 
Any model of $T^{\pm}$ expands to a model of $T\td$.

\prf    
We may assume $\gamma$ includes all qf formulas on $P$ alone, and is closed under Boolean combinations. 
  The language $\widetilde{L}$ consists of the language of $T$, along with new   relations $Q_{\phi;a} \subset P$ for each $a \in  J:=\co_{\g}(T)$   and $\phi \in \gamma$.   The theory $T\td$
is read tautologically off the pattern types, so that
if $J \models  \R_{\phi_1,\ldots,\phi_n; \alpha}(a_1,\ldots,a_n)$ then  $T\td$ includes
\[ (\forall x_1,\cdots,x_n) \neg (\alpha(x_1,\ldots,x_n) \wedge \bigwedge Q_{\phi_i;a_i}(x_i)) \] 
as well as
\[  Q_{\neg \phi ;a } \vee Q_{\phi; a} \]
To expand a model $M$ of $T$ to a model of $\TS$ thus amounts to specifying an $\LS$-
homomorphism $h:J  \to S_\gamma(M)$, and interpreting $Q_{\phi,a}$ as the $\phi$-definition of $h(a)$.
\begin{equation} \label{exp}
 Mod(T\td) = \{(M,h): M \in Mod(T), h: J=\co_\g(T) \to S_\gamma(M) \} \end{equation}
 
 \claim{}    $T\td$ is irreducible. 
\prf  Note that up to $T\td$-equivalence, the $Q_{\phi;a}$ are closed under Boolean combinations (for instance
 $Q_{\neg \phi;a} = \neg Q_{\phi;a}$); and include the   qf formulas on $P$ alone.   Thus to see that $T\td$ is irreducible, we have to show that 
 if $x,y$ are disjoint tuples of variables and 
 \[ T \td \models (\forall x)Q_{\phi;a}(x)  \vee (\forall y)Q_{\phi';a'}(y) \ \ \ \ (*) \]
 $T \td \models (\forall x)Q_{\phi;a}(x)$ or $T \td \models (\forall y)Q_{\phi';a'}(y) $.  
 
 Indeed assume  (*).    Let $M \models T$ be existentially closed.  Let $j: J \to S_\gamma(M)$ be an embedding.  Note that
 $j(a)$ and $j(a')$ are $\g$-types over $M$, finitely satisfiable in $M$ by the existential closedness of $M$.  Let $M'$ be an $|M|^+$-saturated
 elementary extension of $M$.   Then $j(a)$ and $j(a')$ are realized in $M'$.  Let $c \models j(a)$ and let $c' \models j(a')$.  
 Then $\neg \phi(c,m) \& \neg \phi'(c',m')$ is not represented by $m,m'$ from $M$; so one of them is not represented, say
 the former; 
 reading back this implies that $S_\g(M) \models \R_{\neg \phi}(j(a))$; as $j$ is an embedding, $J \models \R_{\neg \phi}(a)$,
 so $T \td \models (\forall x) \neg Q_{\neg \phi;a}(x)$.  Hence $T g \models (\forall x)   Q_{ \phi;a}(x)$, as required.
 \eprf

Let $T'$ be any expansion of $T$, having definable patterns at $\gamma$;  we will compare $T\td$ with $T'$.   
Let $\gamma'$ consist of $\gamma$ along with all $T'$-qf-$0$-definable subsets of the parameter sorts
  (close under Boolean combinations.)  
Let $J=\co_{\gamma}(T)$, $J' = \co_{\gamma'}(T')$, $\LS,\LS'$ their languages.  Let $M' \models (T')^{\pm}$, $M = M'|L$ the restriction to the language of $T$,
$S'=S_{\gamma'}(M')$, $S=S_\gamma(M)$.  We have a natural restriction map $r: S' \to S$.
Then $r$ is an $\LS$-homomorphism, and any section $s: S \to S'$ (i.e. map satisfying $r \circ s = Id_S$, in this case unique) is also an $\LS$-homomorphism.  Let $j: J \to S$ be an $\LS$-homomorphism,
and $j': J' \to S'$ an $\LS'$-homomorphism; use $j'$ to identify $J'$ with an $\LS'$-substructure of $S'$.  
We also have an $\LS'$-retraction $\rho: S' \to J'$. Then $\rho \circ s \circ j: J \to J'$ is an 
$\LS$-homomorphism.   By \eqref{exp}, this corresponds to an enrichment of $M$ to a model of
 $T\td$.  But by total definability, for  $q\in J'$ and $\phi \in \gamma$, the $q$-definition
 of $\phi$ is qf definable in $T'$.  This gives a map of $\widetilde{L}$ to $L'$ over $L$,
 mapping $Q_{\phi;a}$ to the  $\rho \circ s \circ j (a)$-definition of $\phi$.  It is clear that the pullback
 of $T'$ is precisely $T\td$.   Thus we have interpreted $T\td$ in $T'$, fixing $T$.

We now  check that $T\td$ has definable patterns at  $\gamma$.   
 Let $T'=T\td$, and let notation   ($\gamma', J',\LS',M',S',r: S' \to S, s: S \to S', j':J' \to S'$,
 $\rho: S' \to J'$).   In particular $M' \models T\td$, and the $\LS'$-structure on it is given by 
 a homomorphism $j: J \to M=M'|_L$.  Thus the $p$-definition of any $\phi \in j(J)$ is definable in $\LS'$.
 Now as we saw above, $\rho \circ s  : j(J) \to J'$ is an
$\LS$-isomorphism.  Thus if $q=\rho(s(p))$ then the   $q$-definition of $\phi$ equals to $p$-definition of $\phi$.
Now  $\rho \circ s(J)  = J'$ (since $r \circ \rho \circ s$ is an isomorphism on $j(J)$ into $r \circ \rho \circ s \circ j(J)$,
and $r$ is 1-1.)  Thus every element of $J'$ is $\LS'$-definable, as asserted.


Uniqueness is proved as in the first paragraph: let $T''$ be another universal theory expanding $T$, with  
definable patterns at $\gamma$, and minimal.   We have  found an interpretation $f$ of $T\td$ in $T''$ over $T$.
By the assumed minimality of $T''$, we also have an interpretation $g$ of $T'$ in $T\td$ over $T$.  
The composition $g \circ f$yields a self-interpretations of $T\td$.  That corresponds
to endomorphisms of $J$; but we know that endomorphisms of $J$ are automorphisms; equivalently
self-interpretations of $T\td$ over $T$.  We may assume, by twisting with such a self-interpretation, that $g \circ f=Id$.
On the other hand, the interpretation of $T''$
in $T\td$ must be 1-1 (if two qf formulas are intepreted by the same relation of $T\td$, they are equal.)
Hence from $g \circ f \circ g= g$ we obtain $f \circ g=Id$ also.   

Thus the two self-interpretations amount to a renaming of the new predicates indexed by $J$ (by an automorphism 
of $J$), showing that $T\td,T''$ agree after a bijective matching  of their new predicate symbols.

The last statement comes from \eqref{exp} and \lemref{compact}.
  \eprf

 \begin{prop}\lbl{beth} ($T$ QEable).   $T$ has definable patterns iff $T\td$ is an expansion by definition of $T$ iff $|\Hom(J,S(M))|=1$ for all $M \models T$ (or for some sufficiently saturated $M$.)   More generally, let $J_0 \subset J$; if for all $M$, the restriction map $\Hom(J,S(M)) \to \Hom(J_0,S(M))$
 is injective, then $T\td$ is an expansion by definition of $T$ along with the predicates of $T\td$
 corresponding to $J_0$. \end{prop}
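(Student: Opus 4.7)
The label \texttt{beth} signals the key tool: Beth's definability theorem. The plan is to prove the ``more generally'' statement first; the triple equivalence then drops out, with $(c)\Rightarrow(b)$ being the $J_0=\emptyset$ case.

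For the generalization, write $\widetilde{L}=\widetilde{L}^{(0)}\cup\{Q_{\phi,a}:a\in J\setminus J_0,\ \phi\in\gamma\}$, where $\widetilde{L}^{(0)}=L\cup\{Q_{\phi,a}:a\in J_0,\ \phi\in\gamma\}$. We must show each $Q_{\phi,a}$ with $a\in J\setminus J_0$ is $\widetilde{L}^{(0)}$-definable modulo $T\td$. Viewing $T\td$ as a first-order theory (using the QEability of $T$ to pass to a Morleyized complete companion), Beth's theorem reduces the task to verifying \emph{implicit} definability of each such predicate. So fix $a\in J\setminus J_0$, $\phi\in\gamma$, and suppose $(M,h_1),(M,h_2)\models T\td$ share $L$-reduct $M$ and agree on every $Q_{\phi',a'}$ with $a'\in J_0$. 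Since $\gamma$ is closed under Boolean combinations, a $\gamma$-type is pinned down by its $\phi'$-definitions as $\phi'$ ranges over $\gamma$; hence the agreement forces $h_1|_{J_0}=h_2|_{J_0}$. The injectivity hypothesis on $\Hom(J,S(M))\to\Hom(J_0,S(M))$ then yields $h_1=h_2$, so $h_1(a)=h_2(a)$ and in particular $Q_{\phi,a}^{(M,h_1)}=Q_{\phi,a}^{(M,h_2)}$ -- the required implicit definability.

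For the three equivalences: $(b)\Rightarrow(a)$ follows because an $L$-definition $Q_{\phi,a}\equiv\theta_{\phi,a}(y)$ translates via \exref{def0} into $\R_{\phi(x,y)\iff\theta_{\phi,a}(y)}$ lying in the atomic type of $a$; ranging over $\phi$, the pattern type of $a$ is definable. $(a)\Rightarrow(c)$: once every pattern type is definable, the atomic type of each $a\in J$ contains $\R_{\phi\iff\theta_{\phi,a}}$ for every $\phi\in\gamma$, forcing $h(a)$ to be the unique type in $S(M)$ with those $\phi$-definitions; so $h$ is pinned down on all of $J$. Finally $(c)\Rightarrow(b)$ is the $J_0=\emptyset$ case of the general statement: injectivity of $\Hom(J,S(M))\to\Hom(\emptyset,S(M))$ is just the ``at most one'' clause of $|\Hom(J,S(M))|=1$, and existence of at least one homomorphism is provided by \lemref{compact}.

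The main obstacle is the Beth application itself: one must ensure the theorem is applied in a setting that yields a \emph{parameter-free} definition, which is automatic here since the $Q_{\phi,a}$ and $T\td$ are set up with no distinguished constants. The ``sufficiently saturated $M$'' alternative in $(c)$ is handled by the standard remark that implicit definability, being a first-order condition on pairs of expansions of a fixed reduct, can be read off any one sufficiently saturated model.
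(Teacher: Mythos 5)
Your proposal is correct and uses the same core idea as the paper: the injectivity of the restriction map $\Hom(J,S(M))\to\Hom(J_0,S(M))$, combined with the identification of $T\td$-models over a fixed $M$ with elements of $\Hom(J,S(M))$ via \eqref{exp}, gives precisely the implicit-definability hypothesis of Beth's theorem, which then yields an $\widetilde{L}^{(0)}$-definition of each $Q_{\phi,a}$. The paper's proof is stated more tersely (it does not spell out the chain $(b)\Rightarrow(a)\Rightarrow(c)\Rightarrow(b)$), but it is the same Beth-via-implicit-definability argument, so your writeup matches the intended route and, if anything, supplies more of the routine detail.
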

 \prf  Let $h \in \Hom(J,S(M))$.  Let $j \in J$, and consider a typical predicate of $T\td$ corresponding
 to $q=h(j)$, namely $d_qx \phi(x,y)$ for some $\phi$.   The fact that $h$ is a homomorphism is equivalent to
 implicit definability constraints on such predicates.  The assumption is that these definability constraints determine
 the interpretation of the predicate uniquely (given the interpretation for $q' \in J_0$.)  By Beth's theorem,
 $d_qx \phi(x,y)$ is definable (relative to similar predicates for $J_0$.)  
   \eprf

\begin{rem}   If $T$ has definable patterns then every type over $\emptyset$
has an extension to a definable type over $\emptyset$;  and also, by    \propref{duality1}, to invariant type that is co-definable over $\emptyset$. \end{rem}
     \ssec{Topology of $\co(T)$}

{\em For the rest of the section,   simply because the proofs were written with this assumption,
we assume $T$ is QEble; or at least, where indicated, Robinson.  It is likely that much can be generalized.}

 For stable theories, Shelah's finite equivalence relation theorem can be read as saying that 
 distinct elements of $\CU$ are separated by a finite {}definable partition.   Here we will consider $0$-definable family $\bar{E}=(E_d: d \in D)$ of 
(parameterically) definable $m$-partitions.  The condition that two types over $M$ are separated by $E_d$ for {\em any} $d \in D(M)$  can be formulated as a basic formula $\Xi'_{\bar{E}}$ of $\LS$, namely $\R_{E_u(x,x');D(u)}$.

\begin{lem}\label{equality} Let $p,p' \in \CU$ be distinct.   \begin{enumerate}
\item There exists a    formula  $\Xi$, finite conjunction of atomic formulas, with $\CU \models \Xi(p,p')$, such that 
$\TS \models \neg (\exists \xi) \Xi(\xi,\xi)$.  
\item  (QEble case.)  
Let $\Xi$ be as in (1).  Then there exists $m$ and a nonempty $0$-definable family $\bar{E}=(E_d: d \in D)$ of 
$m$-partitions, so that  $\TS \models \Xi \to \Xi'_{\bar{E}}$.   
 
\end{enumerate}
\end{lem}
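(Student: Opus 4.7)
Part (1) is a separation statement in the pp topology, while Part (2) asks to upgrade this separation into a genuine (parameterically) definable partition in the underlying theory $T$. The natural strategy is to handle (1) using the topological facts of \secref{pptop} together with \propref{qe}(1), and then to unpack the content of the hypothesis of (2) as a universal consequence of $T$ read off the axiom scheme of $\TS$ (see the discussion after \defref{put}).

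\smallskip

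\emph{Plan for (1).} The key input is that $\CU$ is compact and T1 in the pp topology (\secref{pptop}), hence so is $\CU^2$ on the appropriate pair sort; and the diagonal $\Delta = \{(\xi,\xi):\xi\in\CU\}$, being the image of the compact $\CU$ under the continuous diagonal map, is itself compact. Since $p\neq p'$, the point $(p,p')$ lies outside $\Delta$, and by T1 on $\CU^2$, for each $(q,q)\in\Delta$ there is a pp formula $R_q(\xi_1,\xi_2)$ with $\CU\models R_q(p,p')$ but $\CU\not\models R_q(q,q)$; the complements of the pp-closed sets $\{R_q\}$ form an open cover of $\Delta$, and by compactness finitely many suffice. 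Taking $R$ to be the conjunction of the corresponding $R_{q_i}$, one obtains a pp formula with $\CU\models R(p,p')$ and $\TS\models\neg\exists\xi\,R(\xi,\xi)$. Now apply \propref{qe}(1) to rewrite $R$ as an (infinite) atomic conjunction $\bigwedge_k\Xi_k$; since $\neg\exists\xi\,(\bigwedge_k\Xi_k(\xi,\xi))$ holds and the diagonal is compact, finitely many $\Xi_k$ already exclude it, giving the desired finite $\Xi$.

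\smallskip

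\emph{Plan for (2).} Write $\Xi=\bigwedge_j\R_{t_j}$ with $t_j=(\phi_{1j},\ldots,\phi_{r_jj};\alpha_j)$, where each $\phi_{ij}(x_{\nu(i,j)},y)$ has its own parameter variable and the distinguished variable index $\nu(i,j)\in\{1,2\}$. Unfolding the axiomatization of $\TS$ after \defref{put}, the hypothesis $\TS\models\neg\exists\xi\,\Xi(\xi,\xi)$ translates into the $T$-consequence
\[
T\models(\forall x)\bigvee_j(\exists y)\,\psi_j(x,y),\qquad\psi_j(x,y):=\alpha_j(y)\wedge\bigwedge_i\phi_{ij}(x,y).
\]
The partitions should come from this data: for each $j$ and each parameter $b$ with $\alpha_j(b)$, the relation $E_{j,b}(x,x'):=\bigwedge_i(\phi_{ij}(x,b)\leftrightarrow\phi_{ij}(x',b))$ is an equivalence relation with at most $2^{r_j}$ classes, giving a $0$-definable family of $m$-partitions for $m=\max_j 2^{r_j}$ parameterized by $d=(j,b)$ in a suitable $0$-definable sort $D$. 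The partial computation is encouraging: if for some $j$ and $b\in\alpha_j$ one has $\psi_j(a,b)$ for $a\models p$, then $\Xi(p,p')$ forbids the joint pattern and forces $\phi_{ij}(a',b)$ to fail for some $i$ with $\nu(i,j)=2$, so $E_{j,b}$ separates; symmetrically when $\psi_j(a',b)$.

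\smallskip

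\emph{Main obstacle.} The difficulty is that $\R_{E_u;D(u)}(p,p')$ demands \emph{every} $d\in D$ to separate, not merely the existence of one, so a naive $D=\{(j,b):\alpha_j(b)\}$ is too large: a parameter activating neither $p$ nor $p'$ puts both realizations in a common class. The crux of the proof will therefore lie in the right choice of $D$. I expect one must take $D$ to parameterize both a witness $j$ and a constrained $b$ (for instance a product of the parameter sorts across $j$, or pairs $(a,b)$ with $\psi_j(a,b)$), and define $E_d$ as a refinement that encodes the activation witness, so that the $T$-consequence above forces every $d\in D$ to discriminate via the $\Xi$-forbidden pattern. The use of QE (the hypothesis of (2)) presumably enters here by ensuring that the activation condition and the defining partition are quantifier-free, so that $D$ is indeed a parameter sort and $E_u$ an atomic binary relation of the kind $\R_{E_u;D(u)}$ anticipates.
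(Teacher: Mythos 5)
Your treatment of part (1) is essentially correct, though somewhat roundabout. The T1 property as stated in \S 2.7 already produces a \emph{single} pp formula $R(\xi_1,\xi_2)$ with $\CU\models R(p,p')$ and $\TS\models(\forall\xi)\neg R(\xi,\xi)$ --- one does not need the pointwise-separation-plus-covering step. The paper shortcuts even further by invoking maximality of atomic types (Proposition~\ref{qe}(4)) applied to the type of $(p,p')$, which packages the entire argument. But your reduction via Proposition~\ref{qe}(1) followed by a compactness step to obtain a finite subconjunction $\Xi$ is sound.

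Part (2), however, has a genuine gap, and in fact two. First, your translation of the hypothesis $\TS\models\neg\exists\xi\,\Xi(\xi,\xi)$ into $T\models(\forall x)\bigvee_j(\exists y)\psi_j(x,y)$ is incorrect. The axiomatization of $\TS$ discussed after Definition~\ref{put} asserts the existence of a \emph{tuple} $(u_1,\ldots,u_n)$ satisfying a side formula $\theta$ and covering, uniformly, \emph{all} $x$; it is not an unbounded per-$x$ existential. A quick counterexample: in DLO with $\psi=\psi'=(x<y)$, one has $T\models\forall x\,\exists y\,(x<y)$, yet $\TS\not\models\neg\exists\xi\,\R_{\psi\wedge\psi'}(\xi,\xi)$, because the type ``above everything'' over any $M$ omits every $x<c$ with $c\in M$. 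The quantifier on $y$ must range over the base model $M$, not the model where $p$ is realized, and compactness pins it to a \emph{finite} set of specific parameters. Second, you correctly identify the obstacle regarding non-activating parameters in $D$, but you leave its resolution as a conjecture (``I expect one must take $D$ to parameterize\ldots'') rather than carrying it out. The paper's actual move is different and is the crux: one observes that the partial type $\mathrm{tp}(x/M)=\mathrm{tp}(x'/M)$ together with $C=\{\neg(\psi(x,c)\wedge\psi'(x',c)):c\in M\}$ is inconsistent with $T$ and $\mathrm{Diag}(M)$, and then uses \emph{first-order} compactness to extract simultaneously a finite $C_0\subseteq C$ and a finite $M$-definable partition $\phi_1(x,d),\ldots,\phi_n(x,d)$ such that each $\phi_i(x,d)\wedge\phi_i(x',d)$ is inconsistent with $C_0$; the $0$-definable family $D$ is then read off these finitely many parameters (both the partition parameter and the finitely many $c_j$ from $C_0$) using QE, and is nonempty by construction. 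Without this compactness-extraction step, the bound $m$ on the number of classes and the $0$-definability of $D$ are not obtained.
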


\prf  (1) By the maximality of atomic types realized in $\CU$, \lemref{qe} (4), applied to the type of $(p,p')$.
\\
(2)  Let $M \models T$.    $\Xi$ is   a finite conjunction of basic formulas $\R_{\psi,\psi'}$; 
we consider a single one for simplicity (or using the elimination of finite conjunctions.)  
 Since $\TS \models \neg (\exists \xi) \Xi(\xi,\xi)$, there is no type $p(x)$ satisfying $\Xi(p,p)$.
 So $tp(x/M)=tp(x'/M)$ is inconsistent with the conjunction $C$ of all $ \neg (\psi(x,c)  { \wedge } \psi'(x',c))$.
 By compactness, there exists a finite $C_0 \subset C$ and
a finite $M$-definable partition of $M$ into definable sets $\phi_1(x,d),\ldots,\phi_n(x,d)$
such that for each $i$, each $\phi_i(x,d)  { \wedge } \phi_i(x',d)$ is already inconsistent  with $C_0$.
Let $E_d(x,x')$ be the equivalence relation:  \[\bigwedge_i (\phi_i(x,d) \iff \phi_i(x',d)) \] 
Then $E_d$ is part of a $0$-definable family of definable equivalence relations with $\leq 2^n$ classes,
and each having the required property (i.e. no  pair of equivalent elements can satisfy $C_0$.) 
\eprf

Similarly:
\begin{rem} \label{cov} 
  Let  $\g, \g'$ be two sets of formulas.   Assume:  whenever $\phi(x,y) \in \g$, 
there exist variables $x',y'$ with $\phi'(x',y') \in \g '$,  and vice versa.  
  Let $J = \CU_\g, J' = \CU_{\g'}$, and let $M$ be any model of
$T^{\pm}$.   Then there exists a canonical bijection between $\Hom(J,S(M))$ and $\Hom(J',S(M))$.\end{rem}

\prf     We may assume that $\g \subset \g'$, by comparing both to $\g \union \g'$. 
 In this case it suffices
to show that if $p$ is a $\g'$-type,  then
 $h(p)=q$ iff $h(p|\gamma)=q|\gamma$.     This is clear since
any homomorphism $h$ must preserve the `change of variable' relations $\R_t$ with 
$t=(\phi(x,y),\neg \phi'(x',y'))$ and $t=(\neg \phi(x,y),  \phi(x',y'))$.   

%
%
%

\eprf%

\begin{lem} \label{hf1} Let $a \in \CU$.  
\begin{itemize}
\item  We have $Ga=P(\CU)$ where $P$ is the $\LS$-atomic type of $a$.
\item The map $e_a: G \to \CU,  \, g \mapsto ga:=g(a)$ is continuous and and closed.
\item  $G$-conjugacy is an intersection of $|L|$ open relations on $\CU$.
\end{itemize}
\end{lem}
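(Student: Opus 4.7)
Part (1) is immediate from the homogeneity of $\CU$ for $\LS$-atomic types (\propref{qe}(3)): two elements of $\CU$ are $G$-conjugate exactly when they realize the same atomic $\LS$-type, so $Ga = P(\CU)$.

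For (2), continuity is built into the definition of the topology on $G$ (see \secref{pptop}): the preimage under $e_a$ of a pre-basic open $\{x : \neg R(x,c)\}$ of $\CU$ is $\{g : \neg R(g(a),c)\}$, which is pre-basic open in $G$. For closedness, the plan is first to observe that $Ga = P(\CU)$ is closed in $\CU$, as the intersection of the sub-basic closed sets $\{x : \phi(x)\}$ over the pp-formulas $\phi$ in the atomic type of $a$. Next, I would show that the subspace topology on $Ga$ is Hausdorff. Given distinct $p,q \in Ga$, \lemref{equality}(1) produces a conjunction $\Xi$ of atomic pp-formulas with $\Xi(p,q)$ and $\TS \models \neg(\exists \xi)\Xi(\xi,\xi)$, and \lemref{equality}(2), using the QEble assumption, yields a $0$-definable family of finite definable partitions $(E_d)_{d\in D}$ with $\TS \models \Xi \to \Xi'_{\bar E}$; hence for every $d\in D$, the points $p,q$ lie in different $E_d$-classes. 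Since $E_d$ has only finitely many classes, each class is simultaneously pp-definable in $\LS$ (hence closed in $\CU$) and the complement of a finite union of such classes (hence open), so clopen. Picking any $d\in D$ yields disjoint clopen neighborhoods of $p$ and $q$. Closedness of $e_a$ now follows: $C \subseteq G$ closed is compact since $G$ is compact, so $e_a(C)$ is compact inside the Hausdorff space $Ga$, hence closed in $Ga$, hence closed in $\CU$.

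For (3), the plan rests on the maximality of the atomic types of elements of $\CU$ (\propref{qe}(4)): for any pp-formula $\phi$, an element $x\in\CU$ fails $\phi$ iff some orthogonal $\psi$ (i.e., $\psi \perp \phi$) satisfies $\psi(x)$, giving the identity
\[ \{x \in \CU : \phi(x)\} \;=\; \bigcap_{\psi \perp \phi} \{x \in \CU : \neg \psi(x)\}, \]
which expresses the closed set on the left as an intersection of sub-basic opens. Consequently, for each atomic $\LS$-formula $\phi$, the ``same-$\phi$-value'' relation $E_\phi = \{(a,b) : \phi(a) \iff \phi(b)\}$ equals $(\{x : \phi(x)\} \times \{y : \phi(y)\}) \cup (\{x : \neg \phi(x)\} \times \{y : \neg \phi(y)\})$ and can be rewritten by distributivity of $\cup$ over $\cap$ as
\[ E_\phi \;=\; \bigcap_{\psi,\psi' \perp \phi} \bigl( (\{x : \neg \psi(x)\} \times \{y : \neg \psi'(y)\}) \cup (\{x : \neg \phi(x)\} \times \{y : \neg \phi(y)\}) \bigr), \]
an intersection of $\leq |L|^2 = |L|$ open subsets of $\CU \times \CU$. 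By (1), $G$-conjugacy equals $\bigcap_{\phi \text{ atomic}} E_\phi$, hence an intersection of $|L|$ open relations on $\CU$, as claimed.

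The main obstacle I anticipate is in the Hausdorffness step of part (2): I must verify that the classes of the definable partitions $E_d$ coming from \lemref{equality} really do give clopen subsets of $\CU$ in its $\LS$-topology, and not merely clopen subsets of a model's type space $S_\gamma(M)$. The natural route is to note that $E_d$, being a definable equivalence relation on a $T$-sort, translates to a pp-formula in $\LS$ with parameter; so each class is pp-definable (and thus closed in $\CU$), and the finiteness of the partition forces the complement of each class to be a finite union of pp-definable classes, hence also closed.
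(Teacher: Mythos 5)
Parts (1) and (3) are correct, and your (3) is in substance the same as the paper's: $G$-conjugacy is characterized via maximality of atomic types (Prop.~\ref{qe}(4)) as the condition that no pair $Q\perp Q'$ of orthogonal atomic relations is jointly satisfied, which is a $|L|$-fold intersection of opens. Your decomposition through the $E_\phi$'s is a valid rephrasing.

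The proof of closedness in part (2) has a genuine gap. You reduce closedness of $e_a$ to Hausdorffness of $Ga=P(\CU)$, but this Hausdorffness is not established anywhere in the paper and is in fact essentially the paper's open problem of whether $\fgt=1$: the paper only proves that the \emph{quotient} $P_h=P/\fgt$ is Hausdorff (Prop.~\ref{max-atomic-metrizable}), and that proposition itself invokes Lemma~\ref{hf1}, so any appeal to it here would be circular. Moreover the specific separating argument you give does not go through: \lemref{equality} produces a definable family $(E_d)_{d\in D}$ with $\Xi'_{\bar E}(p,q)$ holding, but $D$ is a parameter sort of $T$, not of $\CU$, so there is no $d\in\CU$ to ``pick.'' For a fixed $d_0\in M$, the $E_{d_0}$-classes are clopen in the \emph{logic} topology of $S(M)$, not in the pp topology of $\CU$: the condition ``$\phi_i(x,d_0)\in p$'' is not of the form $\R_t(p,\bar c)$ with parameters $\bar c\in\CU$, so the classes are not pp-definable in $\LS$. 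The closedness claim $\Xi'_{\bar E}(p,q)$ you do get is a single pp-closed condition on the pair, which is consistent with $p,q$ being topologically inseparable.

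The paper proves closedness directly: a basic closed $F\subseteq G$ has the form $\{g: P(gb,c)\}$ with $P$ pp and $b,c$ tuples from $\CU$, and since basic closed sets are closed under finite intersections and $\CU$ is compact T1, it suffices to show $e_a(F)$ is closed for such $F$. Writing $Q$ for the maximal atomic type of $(a,b)$, part (1) identifies $(a,b)\sim(a',b')$ with $Q(a',b')$, giving
\[
e_a(F)=\{a': (\exists b')\, P(b',c)\wedge Q(a',b')\}=\{a': R(a',c)\},\qquad R(x,z):=(\exists y)\bigl(P(y,z)\wedge Q(x,y)\bigr),
\]
a pp-definable hence closed subset of $\CU$. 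No Hausdorffness is needed. I'd recommend replacing your closedness argument with this computation; the rest of your writeup can stand.
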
 
\prf   (1)  This is  the homogeneity for atomic types,   \lemref{qe}.

 (2)  A basic closed subset of $G$ has the form 
\[F=\{g:  (gb,c) \in P \} \]
where $P \subset \CU^n$ is pp definable.  This makes continuity evident.

Since the basic closed sets are closed under finite intersections, and $\CU$ is compact,
it suffices for closedness to prove that the image of a basic closed set $F$ is closed; this is 
a set of the form 
\[ e_a(F) = \{ga:  (\exists g \in G) (gb,c) \in P \} = \{a':  (\exists b')\, P(b',c)   \wedge (a,b) \sim (a',b') \}\]    
where $\sim$ denotes $G$-conjugacy.  Let $Q$ be the maximal atomic type of $(a,b)$; 
then \[e_a(F) = \{a':  (\exists b')P(b',c)  \wedge   Q(a',b')  \} = \{a': R(a',c) \}\]  
where $R(x,z)$ is the pp formula $(\exists y) ( P(y,z) \wedge Q(x,y) )$; it is closed by definition. 

(3)  $(a,b)$ are $G$-conjugate iff for all atomic $Q,Q'$ such that $\TS \models \neg (\exists x)(Q(x) \wedge Q'(x))$,
we have $\neg (Q(a) \wedge Q'(b))$.  
 

\eprf

Let $\fgt$ be the set of elements of $G$ that act  infinitesimally  on  each type of $\CU$; this may be smaller
than $\fg$.  
 In the notation of \secref{topg}, $\fgt= \fg_X$ where $X$ is the disjoint union of all maximal atomic types   of $\CU$.
 $X/\fgt$ denotes the orbit space, i.e. the quotient of $X$ under $\fgt$-conjugacy.

\begin{prop} \label{max-atomic-metrizable}  Let $P$ be a maximal atomic type of $\CU$.
 \begin{enumerate}
\item $P_h:=P/{ {\fgt}}$ is Hausdorff
\item If $L$ is countable, 
$P/{{\fgt}}$ is metrizable.     
\end{enumerate}
  \end{prop}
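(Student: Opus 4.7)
The plan for (1) is to identify $P/\fgt$ with a coset space of a compact Hausdorff topological group. Fixing $G = \Aut(\CU)$, I would first verify that $\fgt$ is a closed normal subgroup of $G$. Closedness: $\fgt = \bigcap_U \{g \in G : g \bar{U} = \bar{U}\}$, with $U$ ranging over opens in each maximal atomic type of $\CU$; each factor is closed in $G$ by continuity of the evaluation maps $e_b : G \to \CU$ (\lemref{hf1}). Normality: for $g \in \fgt$ and $h \in G$, since $G$ permutes atomic types and $h$ is a homeomorphism, $h^{-1}U$ is open in some maximal atomic type and $\overline{h^{-1}U} = h^{-1}\bar{U}$, hence $(hgh^{-1})(\bar{U}) = h(g(h^{-1}\bar{U})) = h(h^{-1}\bar{U}) = \bar{U}$. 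Consequently $\bar{G} := G/\fgt$ is compact and Hausdorff; separate continuity inherited from $G$ (see \secref{pptop}) combined with Ellis's joint continuity theorem makes it a topological group.

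Now fix $a \in P$ and let $H := \mathrm{Stab}_G(a) = e_a^{-1}(a)$, closed as the preimage of a point. By \lemref{hf1}(2), the orbit map $e_a : G \to P$ is continuous, surjective, and closed, hence a topological quotient map, so $P \cong G/H$. Points $ga, g'a \in P$ are $\fgt$-equivalent iff $g^{-1}g' \in H\fgt$, so composing quotient maps gives a canonical homeomorphism $P/\fgt \cong \bar{G}/\bar{H}$ (with $\bar{H} := H\fgt/\fgt$)---both sides carry the quotient topology from $G$ through the same fibers. Since $H$ is compact and $\bar{G}$ is Hausdorff, the image $\bar{H}$ is compact hence closed; by the standard fact that a topological group modulo a closed subgroup is Hausdorff, $\bar{G}/\bar{H}$ is Hausdorff. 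This establishes (1).

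For (2), I would reduce the metrizability of $P_h = \bar{G}/\bar{H}$ to that of $\bar{G}$: any compact Hausdorff quotient of a compact metrizable group is metrizable. A compact Hausdorff group is metrizable iff it admits a countable base at the identity. When $L$ (and hence $\LS$) is countable, there are countably many pp relation symbols; the task is then to combine these with a countable family of parameters to generate the neighborhood base in $\bar{G}$. The main obstacle is the parameter reduction, since $|\CU|$ may be as large as the continuum. The cleanest route appears to be to first show that for countable $\LS$ each sort of the Hausdorff reflection $\CU_h$ has countable weight---reducing to the classical metrizability of Stone spaces of countable Boolean algebras, applied to the Boolean algebra of pp-definable sets modulo infinitesimal equivalence---and then derive metrizability of $\bar{G}$ from its faithful action on the metrizable $\CU_h$.
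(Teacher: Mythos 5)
Your argument for (1) is essentially the paper's proof, just with the general topology inlined rather than delegated: the paper's \lemref{topglem2}(1) is precisely your coset-space observation ($P\cong G/H$ with $H$ compact, so $X/N\cong \bar{G}/\bar{H}$ with $\bar H$ closed in the Hausdorff $\bar G$). The appeal to Ellis's joint continuity theorem is harmless but heavier than needed; the paper derives joint continuity directly in \remref{closedopen} from the pp structure.

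Part (2) is where the proposal has genuine gaps, and they compound. First, the reduction to ``$\bar{G}=\CG$ is metrizable'' is a reduction to a statement the paper does \emph{not} establish and which may well fail: \corref{size0} only gives a cardinality bound $|\CG|\le 2^{|L|}$, and for a compact Hausdorff group a cardinality bound of $2^{\aleph_0}$ does not imply countable weight. Second, your route to showing $\bar{G}$ is metrizable, via a faithful continuous action on $\CU_h$, does not go through: $\CU_h$ is the union of $P_h$ over \emph{all} maximal atomic types $P$, and for countable $L$ there can be $2^{\aleph_0}$-many such types (\exref{lowerbounds}(1)), so $\CU_h$ as a whole is not second countable and the compact-open argument for $\mathrm{Homeo}$ of a compact metric space does not apply. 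Third, even setting that aside, the plan is circular: establishing that each sort $P_h$ has countable weight \emph{is} the metrizability claim of part (2), so you cannot use it as a preliminary step toward $\bar{G}$'s metrizability in order to then deduce $P_h$'s metrizability from $\bar{G}$.

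The substantive point you gloss over---``reducing to the classical metrizability of Stone spaces of countable Boolean algebras''---is exactly where the work lies, and you haven't said why the relevant base is countable. The a priori obstacle is that pp-open sets carry parameters from $\CU$, of which there may be continuum many. The paper's proof handles this head-on: from the Hausdorffness in (1), the graph $E_{\fgt}$ of $\fgt$-conjugacy on $P$ is pp-closed; by pp-homogeneity of $\CU$ one can quantify out the parameters, so $E_{\fgt}$ is $\bigwedge$-pp-definable (over $\emptyset$); then e.c.-ness gives, for each pair $(a,b)\notin E_{\fgt}$, a parameter-free pp-definable $C$ separating it from $E_{\fgt}$. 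This writes $E_{\fgt}$ as an intersection of $|L|$-many open sets, at which point the elementary topological \lemref{second} gives a basis of size $|L|$ for $P_h$, hence metrizability when $L$ is countable. You should aim the argument directly at the equivalence relation $E_{\fgt}$ on the single sort $P$, rather than detouring through the global object $\bar G$.
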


\prf  (1) Let $N = \fgt=\fg_X$, with $X$ as above.   By \lemref{topglem1}, $N$ is  a closed normal subgroup of 
$G$, and $G/N$ is Hausdorff.  Moreover for $c \in P$, the map $G \to P$, $g \mapsto gc$ is closed, by \lemref{hf1}.
By \lemref{topglem2} (1) and
 \remref{topglem2r}, ${\fgt}$-conjugacy coincides
on $P$ with $\fg_P$-conjugacy, and $P_h=P/{\fgt}$ is Hausdorff.

Since $P_h$ is Hausdorff, the diagonal of $P_h$ is closed, so pulling back to $P$ we see that the graph  $E_{\fgt}$
  of $\fgt$-conjugacy on  $P$ is pp-closed.  Moreover  the pp topology on
$(P_h)^2$ coincides with the product topology.  

(2)   Since $E_{\fgt}$ is pp-closed, it follows from the pp-homogeneity of $\CU$  (quantifying out any parameters)
that $E_{\fgt}$ is $\bigwedge$-pp-definable.  As $\CU$ is e.c., if $(a,b) \notin E_{\fgt}$ then there exists a    pp-{}definable $C$
with $(a,b) \in C$ and $C \meet E_{\fgt} = \emptyset$.  Thus $E_{\fgt}$ is the intersection of $|L|$ open sets,
namely the complements of these sets $C$.   Now metrizability  of the quotient follows from
\lemref{second}.     

\eprf

 \begin{prop} \label{dense} Any $\bigwedge$-pp definable subset  $P$ of $\CU$ has 
  a dense subset of cardinality   $\leq |L|$.   Hence if $P$ is a maximal atomic type, then  $|Aut(P_h)| \leq 2^{|L|}$.

  \end{prop}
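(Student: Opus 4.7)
The plan is to embed the core $\CU$ into a type space $S_\gamma(M)$ over a sufficiently small model $M$ and read off both bounds from the Stone topology there.

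First, by downward L\"owenheim--Skolem, choose $M \models T^{\pm}$ with $|M| \leq |L|$; by \lemref{compact} there is an $\LS$-embedding $\iota: \CU \hookrightarrow S_\gamma(M)$. The Stone topology on $S_\gamma(M)$ has a base of clopens of the form $\{p : \phi(x,b) \in p\}$ with $\phi \in \gamma$ and $b$ a tuple from $M$, hence of weight at most $|L|$. I would check that on $S_\gamma(M)$ this Stone topology coincides with the pp topology: every pp relation $\R_t$ is Stone-closed by definition, existential projection remains closed in the compact Hausdorff Stone space, and conversely every basic Stone-closed set $\{p : \phi(x,b) \in p\}$ is itself pp (using the realized type $tp(b/M)$ as a parameter in a suitable $\R_t$ relation). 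For $P \subseteq \CU$ a $\bigwedge$-pp-definable subset, $\iota(P)$ is then Stone-closed in $S_\gamma(M)$, so the subspace topology on $\iota(P)$ has weight and hence density at most $|L|$. The intrinsic pp topology on $P$ uses parameters only from $\CU$ and is therefore coarser than the topology inherited via $\iota$ from $S_\gamma(M)$; since density is monotone under coarsening of topology, $P$ inherits a dense subset $D$ of cardinality at most $|L|$.

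For the second statement, suppose $P$ is a maximal atomic type. By \propref{max-atomic-metrizable}, $P_h = P/\fgt$ is Hausdorff and the quotient map $\pi: P \to P_h$ is continuous, so $\pi(D)$ is a dense subset of $P_h$ of cardinality $\leq |L|$. From the same embedding into $S_\gamma(M)$ one reads $|P_h| \leq |\CU| \leq |S_\gamma(M)| \leq 2^{|L|}$. Any $\sigma \in \Aut(P_h)$ is a continuous self-map of the Hausdorff space $P_h$, hence is uniquely determined by its restriction to $\pi(D)$; the assignment $\sigma \mapsto \sigma|_{\pi(D)}$ embeds $\Aut(P_h)$ into the function space $P_h^{\pi(D)}$, which has cardinality at most $(2^{|L|})^{|L|} = 2^{|L|}$.

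The main subtlety is distinguishing the three relevant topologies --- the intrinsic pp topology on $\CU$ (parameters from $\CU$), the pp topology on $S_\gamma(M)$ (parameters from $S_\gamma(M)$, including realized types), and the Stone topology on $S_\gamma(M)$ --- and noting that the first is potentially strictly coarser than the restriction of the others. The argument is organized precisely so that density estimates transfer in the right direction: a dense set in the finer topology remains dense in the coarser one, while the cardinality bound on $P_h$ flows the other way through the embedding $\iota$.
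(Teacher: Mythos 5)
Your proof is correct and matches the paper's argument almost exactly: both embed $\CU$ into $S_\gamma(M)$ for a small $M$, extract a dense set of size $\leq |L|$ from the logic (Stone) topology via the chain of inclusions $\ft_p \subseteq \ft_{p,ext} \subseteq \ft_l$, and then use Hausdorffness of $P_h$ to see that automorphisms are determined on the dense set. Two inessential flourishes: the claim that $\iota(P)$ is Stone-closed is neither needed (density is bounded by weight for \emph{any} subspace) nor obviously true (it would require $\iota(\CU)$ to be Stone-closed, which the paper does not assert), and the paper uses only the inclusion $\ft_{p,ext}\subseteq \ft_l$, not the equality of topologies you also argue.
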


 \prf   
  Denote an image of $\CU$
 in $S(M)$ by $J$; we identify $\CU$ with $J$ and $P$ with $P(J)$.
     Three topologies are visible on $P$:  the intrinsic pp topology $\ft_p$;
 the topology $\ft_{p,ext}$ induced from the pp topology on $S(M)$; and the topology  
 induced from the usual 
 logic topology $\ft_l$   on $S(M)$, where a clopen set corresponds to a formula of $L(M)$;
 this last topology has   basis $B_l$ with $|B_l| \leq |L|$.  
 We have $\ft_p \subseteq \ft_{p,ext} \subseteq \ft_l$.
    For each $u \in B_l$ with $u \meet {P} \neq \emptyset$,
 pick $j_u \in u  \meet {P}$, and let ${D}:=\{j_u: u \in B_l, u \meet {P} \neq \emptyset\}$.   Then ${D}$ is $\ft_{p,ext}$-dense in ${P}$: 
  if $ U \in \ft_{p,ext}$   and $U \meet {P} \neq \emptyset$, then $u \meet {P} \neq \emptyset$
   for some $u \in B_l, u \subset U$.    Hence
 $j_u \in U \meet {D}$.   It follows in particular that ${D}$ is $\ft_p$-dense in ${P}$. 
 
 Thus the image $D_h$ of $D$ in $P_h$ is dense in $P_h$.  Since $P_h$ is Hausdorff, 
 an automorphism fixing a dense set is the identity; so any automorphism $\si$ of 
  $P_h$ is determined by $\si|D_h$.   Thus $|Aut(P_h)| \leq 2^{|L|}$.  
  
   \eprf
\begin{cor} \label{size0}  \begin{enumerate} \item $|{\CU}| \leq 2^{|L|}$
\item   $  |\Aut(\CU)| \leq 2^{2^{|L|}}$
\item   $|\CG| \leq 2^{|L|}$.  
\end{enumerate}

\end{cor}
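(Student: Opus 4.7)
The plan is to establish the three bounds in order of increasing subtlety.

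Part (1) will follow immediately from \lemref{compact}: the universal e.c.\ model $\CU$ of $\TS$ embeds into $S(A)$ for any $A \models T^{\pm}$. Choosing $A$ of cardinality $\le |L|$ via downward L\"owenheim--Skolem then gives $|\CU| \le |S(A)| \le 2^{|L|}$. Part (2) is immediate from the set-theoretic inclusion $\Aut(\CU) \subseteq \CU^{\CU}$, yielding $|\Aut(\CU)| \le |\CU|^{|\CU|} \le (2^{|L|})^{2^{|L|}} = 2^{2^{|L|}}$.

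Part (3) is the main content, and I will use two ingredients. First, $\CU_h$ should admit a pp-dense subset $D$ of cardinality $\le |L|$: the language $\LS$ has at most $|L|$ sorts (indexed by finite sets of formulas of $L$ together with distinguished variables), and \propref{dense} applied to each sort---which is itself $\bigwedge$-pp definable---yields a pp-dense subset of size $\le |L|$, whose union has size $\le |L|$ and which descends to a dense subset of $\CU_h$. Second, any $\sigma \in \CG = \Aut(\CU_h)$ preserves all pp-relations, hence is a homeomorphism of the Hausdorff space $\CU_h$; since continuous maps into a Hausdorff space are determined by their restriction to any dense subset, the map $\sigma \mapsto \sigma|_D$ is injective. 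Counting functions $D \to \CU_h$ gives
\[
|\CG| \le |\CU_h|^{|D|} \le (2^{|L|})^{|L|} = 2^{|L|}.
\]

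The main obstacle is the density step in (3): one must check that the pp-dense subsets of the individual $\LS$-sorts furnished by \propref{dense} actually do give a dense subset of $\CU_h$ (which, recall, sits inside $\CU$ as the union of Hausdorff pp partial types, and equivalently by \propref{max-atomic-metrizable} decomposes over the maximal atomic types $P$ via the quotients $P_h = P/\fgt$), and that the bookkeeping of sorts does not inflate the count beyond $|L|$. A clean route is to apply \propref{dense} directly to the Hausdorff part $\CU_{\gamma,h}$ of each $\LS$-sort, after observing that the equivalence relation $E_{\fgt}$ of \propref{max-atomic-metrizable} is $\bigwedge$-pp closed, so that $\CU_{\gamma,h}$ is itself $\bigwedge$-pp definable and \propref{dense} applies sort by sort.
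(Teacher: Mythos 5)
Parts (1) and (2) agree with the paper. For (3) there is a genuine gap: you take $\CG = \Aut(\CU_h)$ as your starting point, but in the body of the paper $\CG$ is \emph{defined} as $G/\fg$, where $\fg = \fg_{\CU}$ is the normal subgroup of infinitesimal automorphisms of $G = \Aut(\CU)$ acting on the (generally non-Hausdorff) $\CU$. The identification $G/\fg \cong \Aut(\CU_h)$ is stated in Theorem~\ref{intro1} but is not proved anywhere before \corref{size0}. To transfer a bound on $|\Aut(\CU_h)|$ to a bound on $|\CG|=|G/\fg|$ you would need the restriction map $G \to \Aut(\CU_h)$ to have kernel contained in $\fg$ (so that $\CG$ injects into $\Aut(\CU_h)$), and this requires, e.g., that $\CU_h$ be dense in $\CU$ --- a nontrivial, unaddressed claim.

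The paper's own argument avoids $\CU_h$ and Hausdorffness entirely and works with $\CU$ and the raw definition of $\fg$. Take $D \subseteq \CU$ dense with $|D| \leq |L|$ (from \propref{dense}, summing over the $\leq |L|$ sorts). If $\sigma \in G$ fixes $D$ pointwise, then for every nonempty open $U$ choose $d \in D \cap U$; then $d = \sigma(d) \in U \cap \sigma(U)$, so $\sigma(U) \cap U \neq \emptyset$. By the definition of $\fg$ in Appendix~\ref{topg}, this forces $\sigma \in \fg$. Hence $\sigma \mapsto \sigma|_D$ is well-defined and injective on $G/\fg$, giving $|\CG| \leq |\CU^D| \leq (2^{|L|})^{|L|} = 2^{|L|}$. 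Your Hausdorff-determined-on-a-dense-set argument is clean and would work if $\CG$ were literally $\Aut(\CU_h)$; a secondary concern (which you flag yourself) is that $\CU_h$ is built from quotient sorts $P/\fgt$ rather than $\bigwedge$-pp subsets of $\CU$, so \propref{dense} does not apply verbatim. But the primary gap is the unjustified use of $\CG = \Aut(\CU_h)$ in place of the paper's definition $\CG = G/\fg$.
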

\prf 
Let $M \models T$, $|M| \leq |L|$.  By \lemref{compact}   ${\CU}$  embeds  into   $S(M)$;
 thus $|{\CU}| \leq 2^{|L|}$; and so  $  |\Aut(\CU)| \leq 2^{2^{|L|}}$. 
 
By \propref{dense}, $\CU$ has a dense set $D$ of size $\leq |L|$, 
   Any automorphism $\si$ of $\CU$ fixing   $D$ (pointwise) has the property that
for a nonempty open $U$, $\si(U) \meet U \neq \emptyset$; i.e. $\si \in {\fg}$.   Thus the restriction
$\si | D$   determines $\si$ modulo ${\fg}$.  Since  
$|\CU| \leq 2^{|L|}$, we have 
 $|\CU^D| \leq 2^{|L|} $.   Thus  $|\CG| = |{G}/{\fg}| \leq  2^{|L|}$.      
 
 \eprf
 
 The third item is similar, but not quite comparable, to  the statement in \propref{dense}   that  the automorphism group of any
Hausdorff   type of $\CU$ has cardinality $\leq 2^{|L|}$.   \exref{topdyn} shows a Hausdorff $\Aut(\CU)$ of cardinality $2^{2^{\aleph_0}}$
is possible.

\begin{rem} \label{closedopen} $\{\si \in Aut(\CU): \si(F) \subset U \}$ is open, for any closed $F$ and open $U$.  
 \end{rem}
 
\prf  $F$ is an intersection of a family of basic closed sets $F_i$, that we may take to be closed under
finite intersections.  By compactness, $\si(F) \subset U$ iff
$\si(F_i) \subset U$ for some $i$.  So we may assume $F$ is basic-closed; similarly we may assume
$U$ is basic open.  Say
$F = \{p:  \R_{\phi,\phi'}(p,q) \}$, $U = \{p':  \neg \R_{\psi,\psi'}(p,q) \}$.  
So $\si(p) \in U$ iff $\neg \R_{\psi,\psi'}(p, q')$, with $q'=\si \inv(q)$.
Then $\si(F) \subset U$ iff there is no $p$ with $\R_{\phi,\phi'}(p,q) \wedge \R_{\psi,\psi'}(p,q')$.
Now embed $\CU$ in $S(M)$, with image $J$.  Then such 
 a $p$ exists in $J$ iff it exists in $S(M)$.  In $S(M)$, the existence of such a $p$
is a consistency question that amounts to $\R_{\theta,\theta';\alpha}(q,\si \inv(q))$ for a certain family of  $\theta,\theta',\alpha$.  Hence the set of pairs $(q, \si \inv(q))$ for which a $p$ exists is $\bigwedge$-pp;
the set of pairs for which it does not is pp-open.  Hence the condition on $\si$ is pp-open too.    \eprf  

  \begin{rem}  The natural map $Aut_L(M) \to Aut_{\CL}(S(M))$ is an isomorphism.  Injectivity is clear using the embedding
 $i:M \to S(M)$, mapping $m$ to the algebraic type $x=m$.  The image of $i$ (in any given sort) is the complement 
 of a basic relation of $\CL$, since it is precisely the set of types representing the formula $x=m$.  Any definable relation $\alpha(x_1,\ldots,x_n) $ on $M$ 
 is mapped by $i$ to a $\CL$-definable relation on $i(M)$, namely the negation of $\R_{x_1=y_1,\cdots,x_n=y_n; \alpha}$.  Thus
 any $\CL$-automorphism of $S(M)$ induces an automorphism of $M$ on the copy $i(M)$.   Finally $\phi(x,c) \in p$
 iff $\R_{\phi(x,y),x'=y'; \phi(y,y')}$ does not hold of the pair $(p,i(c))$; so a $\CL$-automorphism fixing $i(M)$ is trivial.\end{rem}

\ssec{Examples}

 \begin{example}    For finite $\gamma$, any
$0$-definable $\gamma$-type is represented by a unique element of $\CU_{\gamma}$;
it is uniquely characterized by an atomic formula of $\LS$ as in \exref{def0}, and so is fixed by any retraction.

More generally almost $0$-definable $\gamma$-types, i.e. definable types whose canonical definitions are
imaginary elements algebraic over $\emptyset$, can only be permuted among themselves by an 
$\LS$-retraction, and so are present in $\CU$. 
 
 When $\gamma$ consists of stable formulas,   ${\CU}_\gamma$ 
 is the discrete finite space of $\gamma$-types definable almost over $\emptyset$;
equivalently definable over  $\acl^{eq}(\emptyset)$.  
It was here that Shelah introduced imaginaries, and algebraic closure.

 A slightly larger class are the {\em densely definable} pattern types:  $p$ is {\em densely definable}
 if for any consistent $\phi $, for some consistent $\phi' $ implying $\phi$, and some
 $\psi$,    $p$ implies that $X=\psi$ on $\phi'$.  (Again one can check that this implies maximality of $p$.)
 When the underlying sort forms a complete type of $T$,   this is the same is definability.  
 Any densely definable is  represented
 by an element of the core.  Moreover if $p,p'$ are densely definable and densely equal, i.e. 
  for any consistent $\phi $, for some consistent $\phi' $ implying $\phi$, $p,p'$ have he same definition on $\phi'$,
  then they are necessarily represented by the same element.

\end{example} 

\begin{example}    For the random graph, in the home sort, $\CU$ has two elements, corresponding
to the two {}definable types (adjacence to all or to none.)   Similarly for DLO.    For the triangle-free graph, it is   the unique definable type.   
 \end{example}

    \begin{example} \label{skolem} Assume  $T$ has a model ${M}$ whose every element is {}definable.
    Then the underlying space of $\CU=\co(T)$ is nothing more than the type space over $\emptyset$.  Indeed we have as usual an $\LS$-embedding $\CU \to S({M})$,
    commuting with the two maps into $S(\emptyset)$; since $S({M}) \to S(\emptyset)$ is an isomorphism, the 
    map $\CU \to S({M})$ must be surjective.  
    
         This remains true for $\g$-types with distinguished variables $x$ and parameter variables $y$,  
         i.e. $\co_\g \cong S_\g(\emptyset)$, 
     provided $(\exists y) \phi \in \g$ for all $\phi \in \g$.    For this homeomorphism to hold, it suffices that   every element of ${M}^y$ be {}definable.
     
        Nevertheless, the  associated expansion  may not be trivial; see for instance \exref{tournaments}.
    
  Similarly, returning for simplicity to complete types, if every element of ${M}$ is algebraic, $\CU$ can be identified as a space with the Shelah strong types.   %
 
 \end{example}
 
 \begin{example}[cf \cite{vt}] \label{ziegler} Consider the basic ingredient of Ziegler's example of a non-G-compact theory:
 an   oriented circle with $\Zz/n\Zz$ action.   Or a relational  variant, taking a random dense subset
 of the  circle $\Rr/\Zz$, with the relation $y< x<y+1/n$.  
 In either case, ${\CU}$ is finite but nontrivial; it is essentially  $\Zz/n\Zz$  with the regular $\Zz/n\Zz$-action.  
  \end{example}

  \begin{example}[Connected Lie groups]
  For the circle $x^2+y^2=1$ in RCF with the rotation-invariant semi-algebraic relations (Poizat's example), 
  or for the oriented circle as in \exref{ziegler} but with the action of an irrational rotation, 
  ${\CU}$ is the standard circle.
  The embedding to $S_x(M)$ for a model $M$ can be taken to be via the Lebesgue-weakly random types.
  The retraction takes a type over $M$ to the unique coset of the infinitesimal subgroup containing it.
     \end{example}

\begin{example}\label{lowerbounds}  Countable theories with $\CU$ Hausdorff, of cardinality $2^{\aleph_0}$,
$|\CG| =2^{\aleph_0}$.   
           
\begin{enumerate}  
\item  Consider the model completion of the theory of graphs with infinitely many disjoint unary predicates $P_n$.  We consider the sort $S_{\gamma}$ where $\gamma$ is the graph adjacency formula
(considering $S_x$ would make no difference.)  
Let $M$ be a countable model.
There are $2^{\aleph_0}$   maximal definability patterns of $1$-types over $M$; one can  choose $\gamma(x,u)$ to hold for all $u \in P_n$, or for none; and this, independently of $n$.   These are the maximal atomic types of $\TS$.
 They must all be represented in 
$\CU$, hence $|\CU| = 2^{\aleph_0}$.  $\CU$ is Hausdorff; if $p \neq q$, say $(d_px)(\gamma(x,u)  { \wedge } P_1(u))  = P(u)$ while  $(d_qx)(\gamma(x,u)  { \wedge } P_1(u))  = \perp$.  Let $R$ be the atomic formula asserting that $\gamma(x,u)  { \wedge }P_1(u)$ is omitted, and $R'$ the atomic formula asserting that $\neg \gamma(x,u)  { \wedge }P_1(u)$
is omitted.  Then $\neg R, \neg R'$ are disjoint open sets separating $p,q$.  
We have $\Aut(\CU)=1$.  
 
  \item Let $L$ have a ternary relation $\gamma(x;y,y')$; we will concentrate on the sort $S_\gamma$
  (with distinguished variable $x$.)  In addition, as above, $L$ has  
  infinitely many disjoint unary predicates $P_n(y)$.  $T$ states that  each$\g(a;y,y')$ is a tournament:
  $(\forall x,y,y') \neg (\g(x;y,y')  { \wedge } \g(x;y',y) )$, $(\forall x,y,y')\g(x,y,y') \vee \g(x,y',y) \vee y=y'$.   
  Further, for each $m<n$, $T \models (\forall x,y,y')(P_m(y)  { \wedge } P_n(y') \to \g(x,y,y'))$.  
  Let $p \in \CU$.  Then $(dp_x)\g(x,y,y')$  defines a linear ordering, with $P_m$ earlier to $P_n$ if $m<n$.  
For any subset 
$W$ of ${\omega}$, there exists an automorphism $\si_W$ of $\CU$, flipping the ordering on $P_n$ for some $n$, such that $\si_W(p),p$
agree above $P_n$ iff $\alpha \in W$.  Thus $\CG \cong {\Zz/2\Zz}^{\Nn}$ and $|\CG| = 2^{\aleph_0}$.

\end{enumerate} 
       \end{example}

An example with $\fg_{\CU}=\Aut(\CU)$ and $|\Aut(\CU)|=\beth_2$:

 \begin{example}\label{topdyn}  Topological dynamics comes back into the picture if both some set theory, and a group action, are built into the theory $T$.   In our approach, the topological dynamics arises as an example via a  specific theory; in \cite{kns},\cite{kpr}, \cite{KPT}, by contrast,
 it is the first-order theory that is treated as an example of a   topological dynamics, via the type spaces of saturated models. 
 In \exref{Gactions} we will see how that universal minimal flow of any discrete group $\G$ is dual to $\CU(T)$ for an appropriate theory $T=T_\G$.  Here we give a  hands-on treatment of the case of $\Zz$.

Let  $T$ be the model completion of a  bipartite graph $R \subset P \times Q$, with an invertible map $s: Q \to Q$  generating a $\Zz$-action on $Q$.  We are interested in $\CU_\g$ where $\g(x,y)= \{R(x,y)\}$, with distinguished variable $x$ of sort $P$.   
 
 When $\CU$ is embedded into $S(M)$, 
we can identify an element $p$ of $\CU$ with a subset $(d_px)R(x,y)$ of $Q(M)$.  We   take $M$ so that  $Q(M)$ is a single $\Zz$-orbit; if we pick  momentarily a point of the $\Zz$-orbit, we can  view $\CU$ as a set of subsets of $\Zz$.  We have:
\begin{itemize}[leftmargin=0in]
\item $\CU$ is translation invariant.  Indeed there exists a basic relation  $R(p,q)$ asserting
that   $(x \in p, \si(x) \notin q) \vee (x \notin p, \si(x) \in q )$ is omitted.  
Then $S(M) \models (\forall p)(\exists q)R(p,q)$, so this must be true in $\CU$.  In particular the 
  family of subsets of $\Zz$ corresponding to $\CU$ does not depend on the choice of point. 
\item  $\CU$ contains all periodic sets.  Indeed the elements of $S(M)$ of order $m$ are captured by a basic relation; $S(M)$ contains $2^m$ such sets, so all of them must be in $\CU$.    In particular $\CU$ is dense in $2^\Zz$ with the   topology of pointwise convergence.
\item  For any $p_1,\ldots,p_k \in \CU$, any configuration that occurs on some interval of length $m$
(i.e. a $k$-tuple of subsets of $[b,\ldots,b+m]$) recurs infinitely often on other intervals.  (Otherwise
we could get rid of this configuration by an ultrapower and restriction to a $\Zz$-orbit,  finding a homomorphism that is not an embedding on $\{p_1,\ldots,p_k\}$.)
\item  Let $m \in \Nn$, and let $a_0,\ldots,a_k   $ be subsets of $\{0,\ldots,m\}$; $\bar{a}:=(a_0,\ldots,a_k)$.  
Let $W_{\bar{a}}  = \{(p_0,\ldots,p_k ):   (\exists b) (p_i | [b,\cdots,b+m] = b+a_i  )\}$
Given $p=(p_1,\ldots,p_k)$, let $W_{\bar{a}} (p)= \{p_0: (p_0,p) \in W_{\bar{a}}  \}$.  
Then the $W_{\bar{a}} (p)$ form a basis for the pp-topology.  We have $W_{\bar{a}} (p) \neq \emptyset$ provided 
$p \in W_{(a_1,\ldots,a_k)}$.   
\item  If $W_{\bar{a}}(p) \neq \emptyset$, and $W_{\bar{a}'}(p') \neq \emptyset$, then their intersection in $S(M)$ is nonempty,
$W_{\bar{a}}(p) \meet W_{\bar{a}'}(p') \neq \emptyset$ and even includes periodic sets; these are necessarily in $\CU$.

 Thus any two nonempty open sets in $\CU$  have a nonempty intersection.
It follows that $\fg = G$.   Any continuous map on $\CU$ into a Hausdorff topological space is constant.   

 \item  We have $|G|=2^{2^{\aleph_0}}$; moreover, unlike $\CU$,  $G$ has a Hausdorff quotient of that cardinality.    To see this let $I$ be a subset of the interval $(0,1)$ such 
 that $I \union \{1\}$ is a basis for $\Rr$ as a $\Qq$-vector space.  
 The dynamical system $(\Rr/\Zz)^I$, with transformation 
$(a_i) \mapsto (a_i +i)$, is a minimal system.\footnote{To see this, reduce to finite linearly independent $J$; if 
  $Y$ is a closed invariant subset of $(\Rr/\Zz)^J$, translate
 so that $0 \in Y$; then $Y$ is the closure of  the subgroup  generated by the element $(j)_j$; so $Y$ is itself a closed subgroup; so some rational linear relation holds along  it; in particular $\sum m_i \alpha_i=0$, contradiction.}
For $i \in I$,  we define an 
 element $c_i$ of $S(M)$, namely the type corresponding to the set of $n \in \Zz$ such that $ni$ lies in $[0,1/2) / \Zz$.    Let $p_i$ be the atomic $\CL$-type of $c_i$.  Then $p_i$ is a maximal atomic type,
 and the realization set of $p_i$ in $\CU$ forms a copy of $\Rr/\Zz$, on which $\Rr/\Zz$ acts $\CL$- automorphically.  (It is easier, and sufficient for our purposes, to find a copy of $\Zz$
 (appearing as the image of $\Zz i$ in $\Rr/\Zz$), on which $\Zz$ acts by $\CL$-automorphisms;
 the sets are shifts of each other, and the $\CL$-2-type sees this.)  
 Moreover,  for distinct elements $i_1,\ldots,i_k$ of $I$, these $1$-types are almost orthogonal - they generate complete   $k$-types.   
This demonstrates an action of  $(\Rr/\Zz)^I$ (or $\Zz^I$ in the easier version) on a subset of 
$\CU$; by \lemref{qe}, it follows that $\Aut(\CU)$ has $(\Rr/\Zz)^I$ as a homomorphic image. 
 
\item 
  We saw that $\fg = G$, while a large Hausdorff quotient exists:  we have $\fg_q=1$ for certain complete pp types $q$, arising from homomorphisms of $G$ into the circle group.
    We will see in \exref{Gactions-e} the existence of other complete pp type $p \subset \CU$, such that $G$ induces a 
 countably infinite group  $G_p$ of automorphisms of $p$.   As $G_p$ is quasi-compact it cannot be Hausdorff.

\end{itemize}
\end{example}

\end{section}

 \begin{section}{Automorphisms of the core,  and the Lascar group}
\label{lascar-s}
%

     We return to the setting of a complete first order theory.  
     
\ssec{Lascar distance} \label{lascardistance}   
    Let $N$ be a  model  of $T$.    We will call two elements $a,a'$ of the same sort in $N$  
  {\em  Lascar neighbors} if for every $0$-definable family $\bar{E}=(D,E_d)_{d \in D} $ of finite partitions,  $N \models (\exists d) aE_d a'$.   
Equivalently, for any formula $\phi(u)$ consistent with $T$, and finite set $\gamma$ of formulas, there exists $b \in \Phi(N)$
with $tp_{\gamma}(a/b)=tp_{\gamma}(a'/b)$.  The Lascar neighboring pairs are the solution set of a  partial type $L_1^2(x,x')$.

 For a type $q(x,x')$ let us write $L_1^2(q)$ if $q(a,a')$ implies $L_1^2(a,a')$.
For a pair of $1$-types $p,p' \in S_\g(M)$, we define:  
\[ L_1(p,p') \iff (\exists q)(L_1^{2}(q)  { \wedge } \pi_1(q)=p  { \wedge } \pi_2(q)=p') \]
 So $L_1$ is a binary $\bigwedge$-pp-definable relation of $\LS$.   In particular, $L_1$ is also defined on $\CU$.

 If $L_1(p,p')$ holds, we say that $p,p'$ are Lascar neighbors, or have 
{\em Lascar distance at most $1$}.  We   define, in any $S(M)$ or in ${\CU}$, the symmetric relations $L_n$
of Lascar distance at most $n$:
\[L_n(x,y) \iff (\exists x=x_1,\ldots,x_n=y)\bigwedge_{i<n} L_1(x_i,x_{i+1}) \]
and the Lascar equivalence relation
\[{\EL} = \union_n L_n \]

  Call $p_1,p_2$  {\em close neighbors} if  for every definable family of finite local partitions
 $(E_b)_{b \in B} $, for some $b \in B(N)$ and $d' \in D(N)$, $x_i E_b d' \in p_i$.  This implies that $p_1 \union p_2 \models {L_1^2}(x_1,x_2)$, and
in particular $p_1,p_2$ are neighbors.

 In any $\aleph_0$- saturated model $N$, we have 
   $|N^x/\EL| \leq 2^{|L|}$.  
 
 By \lemref{qe},  each $L_n$ can also be written as a conjunction of atomic formulas of $\LS$.

We remark that for $p,p' \in S(M)$, we have  $pL_1p'$ iff for any consistent $\phi$, two realizations
of $p,p'$ can have the same type over {\em some} realization of $\phi$, {\em not necessarily in $M$}.
Strengthening the requirement to ask for a witness in $M$ leads, in $J$, to the equality relation $p=p'$;
see \lemref{equality}.

 Let $\Las_{\CU} : =  {\CU}/{\EL}$, and $\Las_M = S(M)/{\EL}$. (Sort by  sort.)

 \begin{prop} \label{lascar1} Let $j: {\CU} \to S(M)$ be an $\LS$-homomorphism.  Then (sort for sort)  $j$ induces a 
 bijection $j_*: {\Las_{\CU}} \to {\Las_{M}}$.  
  \end{prop}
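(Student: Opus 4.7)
First I would observe that $j$ is automatically an $\LS$-embedding (since $\CU$ is e.c.\ in $\TS$, by Remark 2.2), and that each $L_n$ is $\bigwedge$-pp-definable in $\LS$: $L_1^2$ is a pp partial type, built from the pp conditions $(\exists d)\, E_d(x,x')$ one for each $0$-definable family of finite partitions, while existential projections and finite conjunctions preserve the $\bigwedge$-pp class.

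For injectivity of $j_*$, I plan to exploit that $\bigwedge$-pp-formulas are \emph{reflected} by $j$: any pp-formula holding on $(j(q),j(q'))$ in the extension $S(M)$ must already hold on $(q,q')$ in $\CU$ by existential closedness, and the same extends to any conjunction. Consequently $j(q) L_n j(q')$ in $S(M)$ forces $q L_n q'$ in $\CU$, showing $j_*$ is injective.

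For surjectivity I will use universality. Picking a sufficiently saturated elementary extension $M^*$ of $M$ so that $S(M^*) \models \TS^{\pm}$, the ultrapower embedding $j_u: S(M) \to S(M^*)$ of \lemref{3.5}, composed with $j$, embeds $\CU$ into $S(M^*)$; \propref{prop1} then yields a retraction $r: S(M^*) \to \CU$ onto this copy (after adjusting by an automorphism of $\CU$, using that endomorphisms of $\CU$ are automorphisms). Setting $\rho := r \circ j_u: S(M) \to \CU$ gives $\rho \circ j = \mathrm{id}_\CU$, and $h := j \circ \rho$ is an $\LS$-endomorphism of $S(M)$ fixing $j(\CU)$ pointwise. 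With $q := \rho(p)$ one has $j(q) = h(p)$, so surjectivity reduces to the key claim that $p \EL h(p)$ in $S(M)$.

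To establish this claim I plan to invoke the duality of \propref{duality1}: the endomorphism $h$ corresponds to a global type extending $\tp(M/\emptyset)$ and finitely satisfiable in $M$, while the identity map corresponds to the diagonal coheir. The hypothesis $h|_{j(\CU)} = \mathrm{id}$ says these two global types agree on every maximal atomic pattern (all of which are realized in $\CU$, by \propref{qe}(4)), so their difference is confined to pre-pattern data: realizations $a \models p$ and $a' \models h(p)$ inside $M^*$ can be joined by a finite chain whose consecutive elements share a type over some elementary submodel, i.e.\ a Lascar chain. The main obstacle will be executing this last step---converting the pattern-level identity $h|_{j(\CU)} = \mathrm{id}$ into an explicit finite Lascar chain, rather than only a Kim--Pillay equivalence---which requires using that $\CU$ captures exactly the pre-Lascar definability data of $T$, with no further collapse.
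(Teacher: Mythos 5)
Your setup is essentially the paper's: you build the retraction and reduce surjectivity to the claim that $p$ and $\rho(p)$ (equivalently, $r(p)$) are Lascar equivalent. Injectivity is handled the same way in the paper (embeddings preserve and reflect $\bigwedge$-pp conditions, hence each $L_n$). The ultrapower detour to produce $\rho$ is unnecessary --- since $S(M) \models \TS^{\pm}$, \propref{prop1} directly gives a retraction $r: S(M) \to J := j(\CU)$ --- but it is not wrong.

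The genuine gap is in the key claim. You reduce to showing $p \;\EL\; h(p)$, explicitly flag it as ``the main obstacle,'' and then sketch a duality argument that does not close it. Saying the two global types ``agree on every maximal atomic pattern'' and therefore ``their difference is confined to pre-pattern data'' hence a ``Lascar chain'' is asserting the desired conclusion, not deriving it: nothing in that sketch produces a bounded-length chain, and indeed distinguishing Lascar equivalence from Kim--Pillay equivalence is exactly what is at stake. The paper's argument is shorter and avoids duality entirely, and in fact proves the stronger statement $p \, L_1\, r(p)$: take \emph{any} complete $2$-type $q \in S_{x,x'}(M)$ extending $p(x) \cup r(p)(x')$ and set $q' := r(q)$. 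Because $r$ is a retraction commuting with coordinate restrictions and $r(p) \in J$, $q'$ extends $r(p)(x) \cup r(p)(x')$, so any realization of $q'$ is a pair of elements with the same type over $M$, giving $q' \vdash L_1^2(x,x')$. But $r$ fixes the restriction to $S(\emptyset)$ (\exref{emptyset}), so $q|_\emptyset = q'|_\emptyset$; since $L_1^2$ is a partial type over $\emptyset$, $q \vdash L_1^2(x,x')$ as well. As $q$ was arbitrary, any $a \models p$ and $b \models r(p)$ satisfy $a L_1 b$. You should replace the duality sketch with this argument (or an equivalent one) to complete the proof.
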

 
 \prf We can find a homomorphism $r: S(M) \to J:=j{\CU}$ with $r| {J} = Id_{{J}}$.
   (\propref{prop1}).   Let $g=j \inv \circ r$.  
 Since $r,j$
 are homomorphisms, for $a,b \in {\CU}$ we have ${\CU} \models L_n(a,b)$ iff $S(M) \models L_n(ja,jb)$.
 Thus $j$ induces an injective map  ${\Las_{\CU}} \to {\Las_{M}}$. 
  It remains to show that it is surjective; it suffices to show that $r: S(M) \to J$ preserves Lascar types. 
We have this is a strong form:

\claim{A}  For all $p \in S(M)$ we have $L_1(p,r(p))$; in fact for {\em any} $a \models p$ and $b \models r(p)$
we have $aL_1b$.  

Indeed let $p  \in S_x(M)$, $p'=r(p)$.  Since $p' \in J$ we have $r(p')=p'$.  Let $q \in S_{x,x'}(M)$ be any type extending $p(x) \union p'(x')$, and let $q' = r(q)$.     Then $q'$ extends $p'(x) \union p'(x')$,  so 
$q' \vdash (xL_1x')$ (witness: $M$.)  But by \ref{emptyset},  $q| \emptyset = q' | \emptyset$.  Thus $q \vdash (x L_1 x')$.
This proves the claim and the proposition.  
\eprf

We would of course prefer to say that 
$j_*: {\Las_{\CU}} \to {\Las_{M}}$ is an isomorphism, not just a bijection.  However $\Las_M$ does not classically carry any structure, beyond that of a set acted on by $\Aut(M)$.  We can thus do not better than compare the two as permutation groups.

Let $G=\Aut(\CU)$, with the topology described in \secref{pptop}, and let $\fg=\fg_{\CU}$ be the subgroup of infinitesimal autormophisms with respect to the action of $G$ on $\CU$ (Appendix \ref{topg}).
Let $\CG = G/\fg$;  so $\CG$ is a compact Hausdorff topological group.   
As in \propref{max-atomic-metrizable}, we also let $\fg_{P}$ denote the infinitesimal subgroup 
with respect to the action of $G$ on $P$, and  let $\fgt := \meet_P \fg_P$ be the intersection of $\fg_{P}$ over all maximal atomic types $P$ of $\CU$.  So $\fgt \leq \fg$.

Fix a homomorphism $j: \CU \to S(M)$.  Then $j_*$ identifies ${\Las_{\CU}}$ with ${\Las_{M}}$,
and induces  a homomorphic embedding of $G=\Aut(\CU)$ into $Sym({\Las_{M}})$ (we will also denote it $j_*$.)

Define the Lascar group ${G_{\Las;\g}}$   as the image   of $\Aut(M^*)$ in 
the group of permutations of $\Las_{M;\g} := S_{\gamma}(M)/{\EL}$;
where $M^* \succ M$ is a sufficiently saturated extension, and ${\Las_{M}}$ is   identified with
 ${\Las_{M^*}}$ via the restriction map on types.   
 
\begin{lem}  \label{4.2}   \begin{enumerate}
\item   The image $j_*(G) \leq Sym({\Las_{M;\g}})$  is  precisely  the Lascar group ${G_{\Las;\g}}$.  
\item (Taking $\g$ rich enough).  If $g \in \fg$ then $j_*(g)$ is the identity on $\Las_{\CU}$.    
In fact, for $g \in \g$ and $p \in \CU$ we have $p L_2 g(p)$. 
\end{enumerate}
 \end{lem}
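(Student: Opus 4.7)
Plan:

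\textbf{Part (1).} I would prove the two inclusions separately. For $j_\ast(G) \supseteq G_{\Las;\g}$, given $\sigma \in \Aut(M^\ast)$ with $M^\ast$ sufficiently saturated over $M$, lift $j$ to an $\LS$-embedding $j^\ast : \CU \hookrightarrow S(M^\ast)$ by \lemref{3.5}(3), and fix a retraction $r: S(M^\ast) \to \CU$ with $r \circ j^\ast = \Id_\CU$ via \propref{prop1}. Set $g_\sigma := r \circ \sigma_\ast \circ j^\ast$. Since $\sigma_\ast$ is an $\LS$-automorphism of $S(M^\ast)$, $g_\sigma$ is an $\LS$-endomorphism of $\CU$, hence an automorphism by \propref{prop1}. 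By the Claim inside the proof of \propref{lascar1}, every retraction preserves Lascar classes, so $g_\sigma$ and $\sigma$ induce the same permutation of $\Las_\CU = \Las_M$, giving $j_\ast(g_\sigma) = \bar\sigma$.

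For the reverse inclusion $j_\ast(G) \subseteq G_{\Las;\g}$, given $g \in G$, construct $\sigma \in \Aut(M^\ast)$ by back-and-forth along an enumeration of $M^\ast$: at each stage, pick $\sigma(a_\alpha)$ to realize a type whose Lascar class over $\sigma(a_{<\alpha})$ is the $g$-image of the Lascar class of $tp(a_\alpha/\sigma(a_{<\alpha}))$. The required type is consistent because $g$ preserves every $\LS$-atomic relation $\R_t$, which encodes the compatibility of pattern-types, and realizable by saturation of $M^\ast$. The resulting $\sigma$ induces $j_\ast(g)$ on $\Las_M$.

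\textbf{Part (2).} The triviality of $j_\ast(g)$ on $\Las_\CU$ follows from the stronger $L_2$-assertion, so I focus on the latter. Fix $p \in \CU$ and $g \in \fg$. The plan is to produce a witness $q \in \CU$ with $p\,L_1\,q\,L_1\,g(p)$ by working with $2$-variable types in $S(M^\ast)$. Choose a joint $2$-type $\tau(x,x') \in S_{x,x'}(M^\ast)$ extending $j^\ast(p)(x) \cup j^\ast(g(p))(x')$, and apply a retraction $r: S_{x,x'}(M^\ast) \to \CU_{x,x'}$. Writing $r(\tau) = (q_1,q_2)$, the $2$-variable analogue of Claim A from the proof of \propref{lascar1} gives $p\,L_1\,q_1$ and $g(p)\,L_1\,q_2$ coordinatewise. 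The conclusion $p\,L_2\,g(p)$ will follow if we arrange $q_1 = q_2$.

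This is the main obstacle, and the place where $g \in \fg$ enters. Diagonality of $r(\tau)$ is a $\bigwedge$-pp condition (by \propref{qe}(6)), and the stabilization of pp-open closures by infinitesimal $g$ allows $\tau$ to be chosen so that its retraction lies on the diagonal: the pp-constraints forcing $q_1 = q_2$ lie in the pp-closure of those arising from the trivially diagonal case $\tau|x = \tau|x' = j^\ast(p)$, and saturation of $M^\ast$ makes the joint conditions on $\tau$ simultaneously realizable. Verifying this joint realizability, using the $\bigwedge$-pp definability of $L_1$ and the definition of $\fg$, is the main technical step of the proof.
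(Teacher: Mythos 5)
Your part (1) is half right. The inclusion $j_*(G) \supseteq G_{\Las;\g}$ via $g_\sigma = r\circ\sigma_*\circ j^*$ and Claim A matches the paper. The back-and-forth proposed for $j_*(G)\subseteq G_{\Las;\g}$ is not well-posed: $g$ acts on $\Las_\CU \cong \Las_M$, i.e.\ on Lascar classes of types over the ambient model, so ``the $g$-image of the Lascar class of $tp(a_\alpha/\sigma(a_{<\alpha}))$'' (a Lascar class over a small parameter set) does not mean anything, and the realizability at each stage is asserted rather than proved. The paper's route avoids this altogether: choose $p\in\CU$ in a sort rich enough to enumerate representatives of every Lascar class, set $q=g(p)$, pick $a,b\in M^*$ realizing $j(p),j(q)$; by \exref{emptyset} these have the same type over $\emptyset$, so some $\gamma\in\Aut(M^*)$ carries $a$ to $b$, and then $\gamma$ and $g$ induce the same permutation of $\Las_M$.

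Your part (2) has a fatal gap: the diagonality you hope to arrange is impossible in principle, not merely hard. Any retraction $r: S(M^*)\to J'$ composes with the embedding $j^*$ to give an endomorphism $r\circ j^*$ of $\CU$, which is an automorphism by \propref{prop1}, hence injective. Since $r$ commutes with the coordinate projections $S_{x,x'}\to S_x$ (these are $\LS$-pp maps), $r(\tau)$ always projects to $\bigl(r\circ j^*(p),\ r\circ j^*(g(p))\bigr)$, and these agree only when $p=g(p)$. So no choice of $\tau$ or of $r$ can produce $q_1=q_2$; the obstruction is injectivity of retractions, and infinitesimality of $g$ is irrelevant to it. The paper's mechanism for part (2) is entirely different. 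By \lemref{equality}(2), separation in $\CU$ is captured by the basic relations $\Xi'_{\bar E}$ attached to $0$-definable families $\bar E$ of finite partitions. With $p'=g(p)$, the sets $U=\{\eta:\neg\Xi'_{\bar E}(p,\eta)\}$ and $U'=\{\eta:\neg\Xi'_{\bar E}(p',\eta)\}$ are nonempty open and satisfy $g(U)=U'$ (since $g$ preserves $\Xi'_{\bar E}$ and sends $p$ to $p'$); hence $g\in\fg$ forces $U\cap U'\neq\emptyset$, giving $q\in\CU$ with $\neg\Xi'_{\bar E}(p,q)\wedge\neg\Xi'_{\bar E}(p',q)$. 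Running a compactness argument over all such $\bar E$ (using common refinements) then yields realizations $a\models p$, $a'\models p'$ and a middle point $c^*$ in an elementary extension with $L_1^2(a,c^*)$ and $L_1^2(c^*,a')$, so $p\,L_2\,p'$.
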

\prf
(1)  We first show that $j_*(G)$ falls into the Lascar group $Las_S$.  $g \in Aut(J)$.  Let $M^* \succ M$ be a highly saturated
 and homogeneous extension  of $M$
Let $p \in J$ be $Aut(J)$-conjugate, $q=g(p)$, 
and let $a, b \in M^*$, $a \models j(p), b \models j(q)$.  Then in particular $tp(a/\emptyset)=tp(b / \emptyset)$
(\exref{emptyset}).  
So there exists $\gamma \in Aut(M^*)$ with $\gamma(a)=b$.  It follows that $\gamma$ maps the Lascar type of $a$
(and of $p$) to the Lascar type of $b$ (and of $q$).  This applies to $*$-types too, and we can take $p$ to be rich enough to enumerate all Lascar types of elements of $S$.  This will show that the permutation of $Las_S$ induced by $g$ and by $\gamma$ coincide.


In the converse direction, it suffices to show (for any $M$) that for any 
   $\si \in \Aut(S(M))$, the permutation induced by $\si$ in ${\Las_{M}}$ lies in the image of $j_*$.
      Let   $r: S(M) \to J$ be a retraction.  Then $r \circ \si$
defines an automorphism of $J$.  Since we saw that $r$ preserves Lascar types (Claim A of \propref{lascar1}),
$r \circ \si$ induces on $J/{\EL}$  (identified with ${\Las_{M}}$) the same permutation as $\si$.   This shows that $j_*$ is surjective.
 
(2)   Let us identify $\CU$ with $J=j(\CU)$.  
Let $g \in \fg$, $p \in J$, $p'=g(p)$.  We will show that $L_2(p,p')$ holds (in $S(M)$;  equivalently in $J$.)
Let $\bar{E}$ be a {}definable family of finite partitions, as in \ref{lascardistance}.       
  Let $\Xi' = \Xi'_{\bar{E}}$, as defined above \lemref{equality}; so $ \Xi'(\eta,\eta')$ implies that $\eta,\eta'$ are not close
  Lascar neighbors; in particular we have $\neg \Xi'(p,p)$.  
 So $\neg \Xi'(p,\eta)$ defines an open neighborhood of $p$.   Likewise $\neg \Xi'(p',\eta)$ defines an open neighborhood of $p'$.
  Since $g$ is an  infinitesimal automorphism, 
the intersection of these $g$-conjugate open sets is nonempty, so for some $q \in J$ we have
$\neg \Xi'(p,q) \wedge \neg \Xi'(p',q)$.  As $J \subset S(M)$ we can view $p,p',q$ as types over
$M$; let $a,a',c$ be realizations; then there exist  $d,d' \in D(M)$ with $a E_d c E_{d'} a'$.  
Since this holds for all $\bar{E}$, and any finite number of $\bar{E}$ have a common refinement, it follows
that in some elementary extension $M^*$ of $M$ there exists $c^*$ such that for any $\bar{E}=(D,E_d)_{d \in D}$,
for some $d,d' \in D(M^*)$,  $aE_dc^* E_{d'} a'$.  Now by the definition of $L_1^2$ (\secref{lascardistance}),  it follows that $L_1^2(a,c^*)$ and
$L_1^2(c^*,a')$; so with $q^*=tp(c^*/M)$ we have $L_1(p,q^*)$ and $L_1(q^*,p')$,  hence $L_2(p,p')$.


 \eprf

To define the full Lascar group ${G_{\Las}}$, we take $\g$ to be the set of all formulas, in countably many variables in each sort (both distinguished and parameter variables.)
$Las_T$   is {\em not} in general the inverse limit of 
 $Las_{\g'}$ for finite $\g' \subset \g$.     Let $G=\Aut(\CU)=\Aut(\CU_\g)$.   Define a subset $L_1$ of $G$:
 $g \in L_1^G$ iff $(p,g(p)) \in L_1$ for all $p \in \CU$.  Since $L_1$ is a closed relation on $\CU$, $L_1^G$ is a closed subset of $G$ in the pp topology.    Also denote by $L_1^{\CG}$ the image of $L_1^G$ in $\CG$.
 We will potentially just write $L_1$ for any of these.  Note that $L_1$ is a closed, conjugation-invariant subset  of $G$,   hence this is also the case  for $\CG$.  
 
 Let $M$ be a sufficiently saturated model of $T$, $j: \CU \to S(M)$ an $\LS$-embedding, $J=j(\CU)$,
 $r: S(M) \to J$ a retraction.  We have a map:  $\si \mapsto \alpha(\si):= r \circ \si | J$ from $\Aut(M)$ to $\Aut(J)$.  
 Now by Claim A, $r \si \inv (p) L_1 \si \inv(p)$ for any $p \in S(M)$; so
  $\si r \si \inv (p) L_1 p$; or $\si r q L_1 \si(q)$, for $q \in S(M)$; thus $r \si r \tau (p) L_1 r \si \tau (p) $;
  so $\alpha$ induces a homomorphism $\Aut(M) \to \Aut(J)/{\genl}$, 
  where ${\genl}$ is the group generated in $\Aut(J)$ by the closed normal set $L_1$. 
  
 Any automorphism fixing a model satisfies $\si(p)L_1 p$ and thus $r \si(p) L_1 rp $,
since $r$ respects $L_1$; for $p \in J$ this reads $\alpha(\si)(p) L_1 p$.
  Thus the group of strong Lascar automorphisms (generated, by definition, by
automorphisms fixing a model) maps to the identity, and $\alpha$ induces a homomorphism
$\Aut(G)/\Aut_L(G) \to \Aut(J)/{\genl}$.  The kernel of this homomorphism maps to the identity on $\Aut(J)$
and since $J/\EL  = S(M)/\EL$, fixes all Lascar types.   Thus the kernel is the identity, i.e. $\alpha$ induces 
an isomorphism $ \Aut(G)/\Aut_L(G) \to \Aut(J)/{\genl} $.  By \lemref{4.2} (2), $\fg \subseteq {\genl}$.  Hence:
  \begin{prop}  ${G_{\Las}} = \Aut(M)/\Aut_L(M) \cong \CG/{\genl} $. 
 \end{prop}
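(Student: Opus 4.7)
The plan is to read the proposition as a one-line packaging of the work carried out immediately before it, combined with \lemref{4.2}(2). By definition $G_{\Las}=\Aut(M)/\Aut_L(M)$, so the first equality is a tautology. For the isomorphism to $\CG/\genl$, I would proceed in two steps.

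First, I would invoke the map $\alpha\colon \sigma\mapsto r\circ\sigma|_J$ built in the preceding paragraph from the retraction $r\colon S(M)\to J$ with $J=j(\CU)$. The paragraph shows both (i) that $\alpha$ is a well-defined homomorphism into $G/\genl$, modulo $\genl$ (using Claim A of \propref{lascar1} to control the failure of multiplicativity by an element of $L_1^G\subseteq\genl$), and (ii) that $\Aut_L(M)$ maps trivially while the induced map is injective because it respects Lascar types and $J/\EL=S(M)/\EL$. Hence one has $\Aut(M)/\Aut_L(M)\;\xrightarrow{\sim}\;G/\genl$.

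Second, to descend from $G$ to $\CG$, I would appeal to \lemref{4.2}(2), which says that every $g\in\fg$ satisfies $p\,L_2\,g(p)$ for all $p\in\CU$; combined with the atomic-type homogeneity of $\CU$ from \propref{qe}, this allows one to factor such a $g$ as a product of two elements of $L_1^G$, so $g\in\genl$. Thus $\fg\subseteq\genl$, the surjection $G\to G/\genl$ factors through $\CG$, and with the usual abuse of notation writing $\genl$ for its image in $\CG$ one obtains $G/\genl\cong \CG/\genl$. Chaining this with the first step yields the proposition.

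The only place requiring any real care is the inclusion $\fg\subseteq\genl$: \lemref{4.2}(2) gives only the \emph{pointwise} existence of an intermediate $q_p$ with $p\,L_1\,q_p\,L_1\,g(p)$, and what is needed is a single $g_1\in L_1^G$ realizing the assignment $p\mapsto q_p$ so that $g=g_2g_1$ with $g_2\in L_1^G$. This is the expected main obstacle in a self-contained write-up, and I would handle it by using the homogeneity of $\CU$ for atomic types plus compactness of $G$ to assemble the required $g_1$ from compatible choices on finite tuples. Everything else — the construction of $\alpha$, its well-definedness modulo $\genl$, and the identification of $G_{\Las}$ with $\Aut(M)/\Aut_L(M)$ — is already in place.
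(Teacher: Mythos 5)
Your plan is structurally faithful to the paper's argument, and you correctly identify $\fg\subseteq\genl$ as the step that needs care. But the route you propose for it — directly factoring $g\in\fg$ as a product of two elements of $L_1^G$ using atomic homogeneity and compactness — runs into the difficulty that the midpoint supplied by \lemref{4.2}(2) lives in $S(M)$ and not in $\CU$: the proof there produces a realization $c^*$ in an elementary extension of $M$ and takes its type over $M$, so there is no tuple of $\CU$ to which the atomic homogeneity of $\CU$ can be applied, and a compactness argument assembling a midpoint automorphism has nothing to glue on finite pieces. Separately, your first step concludes an isomorphism $\Aut(M)/\Aut_L(M)\xrightarrow{\sim}G/\genl$ from well-definedness and injectivity alone; surjectivity is also needed and is not supplied by the paragraph you cite.

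Both difficulties dissolve if you also bring in \lemref{4.2}(1). That item shows $j_*\colon G\to G_{\Las}$ is surjective, and since each $h\in L_1^G$ fixes every Lascar type, $\genl\subseteq\ker j_*$, so $j_*$ factors through a surjection $G/\genl\twoheadrightarrow G_{\Las}$. Claim A of \propref{lascar1} shows this surjection composed with your map $G_{\Las}\to G/\genl$ is the identity on $G_{\Las}$: for $\si\in\Aut(M)$ and $p\in J$, $r\si(p)$ and $\si(p)$ lie in the same $\EL$-class, so $\alpha(\si)=r\si|_J$ and $\si$ induce the same permutation of $\Las_{\CU}=\Las_M$. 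Hence both maps are mutually inverse isomorphisms; in particular $\ker j_*=\genl$, which is exactly the missing surjectivity of your first step. The inclusion $\fg\subseteq\genl$ then comes for free: \lemref{4.2}(2) gives $\fg\subseteq\ker j_*$ (only the assertion that $j_*(g)$ is the identity on $\Las_{\CU}$ for $g\in\fg$ is used, not the finer $L_2$ bound), and $\ker j_*=\genl$. This replaces the pointwise-to-global obstacle you flagged with a purely group-theoretic deduction.
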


 We end this section with an example (very similar to one used by Pillay) showing that $L_1$, restricted to a sort $S$, can depend on the full ambient structure in other sorts, and not only on the induced structure on $S$.  Let $S$ carry the structure of a free $\Zz/2\Zz$-action, written $x \mapsto \pm x$, and no additional structure.  Let $S'$ be another sort,  and let $P \subset S  \times S'$ be a `random' relation 
 with the property that $P(x,y) \iff \neg P(-x,y)$.      Then on $S$ as a structure we have $L_1 =S^2$.
 On the other hand on $S$ as part of $(S,S_1)$ we have:  $aL_1 b$ iff it is not the case that $b=-a$.

 \ssec{}

\end{section}
\begin{section}{Elementary Ramsey theory}
\label{ert-sec}

Recall the notion of the Ramsey property from the introduction:  

\begin{defn}  
A complete first order theory $T$  is said to be {\em Ramsey} at a given sort $S$ if   any completion $T'$ of $T$ in the language
$L_P$ with a unary predicate $P \subset S$ adjoined has a model $N'=(N,A)$ ($N \models T$, $A =P^{N'} \subset N$) with an elementary submodel $M$ of $N$,
 such that $P \meet M$ is a 0-definable predicate on  $M$.

 On the other hand, if $T$ is an irreducible universal theory, we say that $T$ is Ramsey (at $V$) if   any
 irreducible universal $T'$ in $L_P$  has a model $N'=(N,A)$ ($N \models T$, $A =P^{N'} \subset N$) with an existentially closed substructure $M$ of $N$,
 such that $P \meet M$ is a qf 0-definable predicate on  $M$.  Here $P$ is a unary predicate of sort $V$.

 $T$ is {\em everywhere Ramsey} if it is Ramsey at $S$ for all $S$.  If $T=T_0^{eq}$ for a   theory $T_0$ with single
 sort $S$, it suffices to  check Ramseyness at $S_0^n$ for each $n$.
 
\end{defn}

Equivalently, for any $M \models T$ and any sufficiently saturated $N'=(N,A) \models T'$, there exists an elementary embedding $f: M \to N$ with $f \inv(A)$ 0-definable in $M$.  

If given two (or more) predictes $P,P',\cdots$, we can first move to a model where $P$ is definable, then to another
where $P'$ is definable; so the  definition would not change if we allow a finite coloring (to be made definable), or several predicates $P$ (or for that matter even an infinite number).
 
 \begin{lem} \label{ramseyqe} Assume $T_{\forall}=Th_{\forall }(M)$ is a Ramsey universal theory at $V$,  $M$ existentially closed. 
  Then $Th(M)$ eliminates quantifiers for formulas on $V$.
 \end{lem}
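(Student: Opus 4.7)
The plan is to induct on the complexity of an $L$-formula $\phi(\bar x)$ all of whose variables are of sort $V$, showing that $\phi$ is equivalent in $Th(M)$ to a quantifier-free $L$-formula. Atomic formulas are already quantifier-free; Boolean connectives reduce to their components; and $\forall y\,\psi$ is handled via $\neg\exists y\,\neg \psi$. Thus, after applying the induction hypothesis inside $\psi$, it is enough to treat the case $\phi(\bar x) = \exists y\,\psi(\bar x, y)$ with $\psi$ quantifier-free.

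For this case I would set $A := \phi(M) \subseteq V^n(M)$ where $n=|\bar x|$, and invoke the equivalent formulation of the Ramsey property stated in the paper (applied at the relevant power of $V$): there exist a quantifier-free $L$-formula $\theta(\bar x)$ and an $L_P$-elementary extension $(M^*,A^*) \succeq (M,A)$ containing an $L$-isomorphic copy $M' \subseteq M^*$ of $M$, with $A^* \cap M' = \theta(M')$. Since $A = \phi(M)$ by construction, the first-order $L_P$-sentence $\forall \bar x\,(P(\bar x) \leftrightarrow \phi(\bar x))$ holds in $(M,A)$, and hence in the elementary extension $(M^*,A^*)$; thus $A^* = \phi(M^*)$.

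The key step is then to equate $\phi$ and $\theta$ on $M$ itself. For any $\bar c \in M'$, combining $A^* \cap M' = \theta(M')$ with $A^* = \phi(M^*)$ gives $M^* \models \phi(\bar c) \iff M^* \models \theta(\bar c)$. Since $\theta$ is quantifier-free and $M' \subseteq M^*$ is an $L$-embedding, $M' \models \theta(\bar c) \iff M^* \models \theta(\bar c)$. On the other hand, the copy $M'$ is existentially closed (being $L$-isomorphic to the e.c. model $M$), so applied to the extension $M' \subseteq M^* \models T$ and to the existential formula $\phi(\bar c) = \exists y\,\psi(\bar c,y)$ with $\psi$ quantifier-free, this gives $M^* \models \phi(\bar c) \iff M' \models \phi(\bar c)$. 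Chaining the equivalences yields $M' \models \forall \bar x\,(\phi(\bar x) \leftrightarrow \theta(\bar x))$, and transporting through the $L$-isomorphism $M \cong M'$ produces the same sentence in $M$. It therefore belongs to $Th(M)$, closing the induction.

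The main obstacle lies in this three-model juggling: one needs the $L_P$-elementary extension in order to transport the definability relation $\forall \bar x\,(P \leftrightarrow \phi)$ from $(M,A)$ up to $(M^*,A^*)$ despite $\phi$ being existential, and separately one needs classical existential closedness of the copy $M'$, combined with the existential shape of $\phi$, in order to pull existential witnesses back down from $M^*$ into $M'$. It is at $M'$ that the $L$-isomorphism with $M$ finally delivers the equivalence of $\phi$ and $\theta$ inside $Th(M)$. The Ramsey hypothesis is what produces this configuration with $\theta$ quantifier-free and calibrated to $\phi$; the whole argument rests on $\phi$ being existential over a quantifier-free matrix, which is precisely why the existential step is the one requiring genuine work.
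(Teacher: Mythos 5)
Your proof is correct and takes a genuinely different route from the paper's. The paper applies \lemref{ramsey-basic} to obtain a maximal pattern type $q$ dense in $(M,A)$, uses the definability form of the Ramsey hypothesis to extract a quantifier-free $D$, and derives each inclusion of $\psi(M) = D(M)$ by contradiction with density (for the converse inclusion, using the e.c.\ property in the pp-formula form of \lemref{u}(1) to produce a $\psi'\perp\psi$ witnessing $\neg\psi(a)$). You instead invoke the equivalent ``copy inside an elementary extension'' packaging of the Ramsey property, transport the $L_P$-definition $P \leftrightarrow \phi$ from $(M,A)$ up to $(M^*,A^*)$, and then chain three facts --- $A^* \cap M' = \theta(M')$, persistence of the quantifier-free $\theta$ along the embedding $M' \subseteq M^*$, and e.c.\ reflection of the existential $\phi$ from $M^*$ back into $M'$ --- to land the equivalence $\phi \leftrightarrow \theta$ inside $M' \cong M$. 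Your version reads as a standard model-theoretic argument with the role of each hypothesis clearly separated, at the cost of juggling three models; the paper's version is shorter and stays within the pattern-type/density machinery it has already developed. Both routes ultimately rest on \lemref{ramsey-basic}, and both rely on the \secref{section-rel} convention that quantifier-free formulas behave as atomic (the Morleyized or complement-closed setting), which is what lets the e.c.\ reflection step apply to $\exists y\,\psi$ when $\psi$ is quantifier-free with negations rather than positive primitive.
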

 
 \begin{proof}Consider a formula $\psi(x)=(\exists y)\phi(x,y)$, with $\phi$ quantifier-free.   Let  $A =\psi^M$.  By \lemref{ramsey-basic},
 some   
 pattern type $q$ containing $T$ is dense in $(M,A)$.   
 By the Ramsey property,
 $q$ is definable by some  quantifier-free  {}definable $D$.   If $T \models (\exists x,y)(\phi(x,y) \wedge \neg D(x))$,
 they by density there exists a finite $M_0 \subset M$ with $M_0 \models  (\exists x,y)(\phi(x,y) \wedge \neg D(x))$,
 and $(M_0,A) \models q$, i.e. $A \meet M_0 = \psi(M_0) = D(M_0)$; this is clearly impossible.  Thus 
 $\psi(M) \subset D(M)$.  Conversely, if $a \in D(M)$ and $M \models \neg \psi(a)$, then since $M$ is e.c.
 there exists a pp formula $\psi'$ incompatible with $\psi$, such that $M \models \psi'(a)$.  Again by density
 there exists a finite $M_0$ with $\psi(M_0) = D(M_0)$, and such that some $a' \in D(M_0)$ satisfies 
 $\psi'(a')$; this contradicts the incompatibility of $\psi,\psi'$.
   
\end{proof}

 \lemref{ramseyqe}   implies in particular that the class of finite models of $T_\forall$ has the amalgamation property, 
 a theorem of  \cite{nr}.     

\begin{prop} \label{ramsey-univ} Let $T$ be an irreducible universal  theory, with  a distinguished sort or family of sorts $V$.
There exists a unique minimal  expansion $T\ram_V$  to an irreducible universal theory  that is Ramsey at $V$. 
Any  $M$ with $Th_\forall(M)=T$ admits an expansion to a model of $T\ram_V$; the space of expansions is
just $Hom(J,S_\g(M))$.
\end{prop}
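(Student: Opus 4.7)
The plan is to reduce the statement to \propref{expandd} (the construction of $T\td$) applied to the auxiliary irreducible universal theory $T^*_V$. The key observation is that, directly from the definitions, $T$ is Ramsey at $V$ precisely when every maximal $\g\ram$-pattern type of $T^*_V$ is definable, i.e.~when $T^*_V$ is totally definable at $\g := \g\ram(T)$. So producing a Ramsey-at-$V$ expansion of $T$ amounts to the same problem as producing a totally definable expansion of $T^*_V$, provided the added structure can be kept on the $L$-side of $L \cup \{R\}$.

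Apply \propref{expandd} to $T^*_V$ with sort family $\g$. This yields a minimal irreducible universal expansion $(T^*_V)\td$ of $T^*_V$, totally definable at $\g$, obtained by adjoining for each $a \in J := \co\ram(T)$ and each $\phi \in \g$ a new predicate $Q_{\phi;a}$ on the parameter sort of $\g\ram$ --- namely on tuples from $V$. The defining axioms, as displayed in the proof of \propref{expandd}, are
\[
 (\forall y_1, \ldots, y_n)\,\neg\bigl(\alpha(y_1, \ldots, y_n) \wedge \bigwedge_i Q_{\phi_i;a_i}(y_i)\bigr)
\]
for each relation $J \models \R_{\phi_1, \ldots, \phi_n;\alpha}(a_1, \ldots, a_n)$, together with $Q_{\neg\phi;a} \vee Q_{\phi;a}$; crucially, $\alpha$ lies in $L$ and each $y_i$ has $V$-sort, so no axiom mentions $R$ or the auxiliary sort $P$. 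I would therefore define $T\ram_V$ to be the theory in the language $L \cup \{Q_{\phi;a}\}$ cut out by exactly these axioms, viewed as an expansion of $T$.

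Irreducibility of $T\ram_V$ then follows by the same saturation argument as in \propref{expandd}: take a highly saturated elementary extension $M^*$ of an e.c.~$M \models T$, embed $J$ into $S_\g(M)$, and realize the relevant pattern types in $M^*$ to witness consistency of any candidate $L$-disjunction of new axioms. The Ramsey property at $V$ is then immediate: given any irreducible universal extension of $T$ by a unary predicate on $V$ and any of its models $(M, A)$, \lemref{ramsey-basic} produces $p \in J$ dense in $(M, A)$; by construction $p$ is totally defined by the $Q_{\phi;p}$, so $A$ agrees on a suitable e.c.~substructure with the $L$-definable set $Q_{R;p}$. Minimality and uniqueness up to bi-interpretation transfer verbatim from \propref{expandd}: given any minimal Ramsey-at-$V$ expansion $T''$ of $T$, form $(T'')^*_V$ (which is totally definable at $\g\ram$ by hypothesis) and invoke the universal property of $(T^*_V)\td$ to obtain an interpretation $T\ram_V \to T''$ over $T$; self-interpretations of $T\ram_V$ then form $\Aut(J)$ just as in \propref{expandd}. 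Finally, the description of $Mod(T\ram_V)$ as pairs $(M, h)$ with $h \in \Hom(J, S_\g(M))$ is read off equation (\ref{exp}) in the proof of \propref{expandd}, once one notes that $h$ determines and is determined by the interpretations of the $Q_{\phi;a}$.

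The main obstacle will be the careful bookkeeping at the $L \cup \{R\}$/$L$ boundary: one must verify that discarding the auxiliary sort $P$ and relation $R$ while retaining the new $V$-predicates loses nothing --- i.e.~that the axioms really are $L$-sentences in the $Q_{\phi;a}$ --- and that every model of $T$ admits an expansion to $T\ram_V$ via some $h \in \Hom(J, S_\g(M))$, which in turn relies on \lemref{compact} applied to $T^*_V$. A subtler point in the minimality step is that the interpretation of each $Q_{\phi;a}$ into $T''$ must land in $L''$ and not involve auxiliary sorts of $(T'')^*_V$; this follows because total definability of $(T'')^*_V$ at $\g\ram$ is witnessed by formulas in $L''$ alone, inheriting the sort restrictions of $\g\ram$.
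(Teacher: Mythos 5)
Your proposal takes essentially the same route as the paper: apply \propref{expandd} to $T^*_V$ at $\g = \g\ram(T)$, observe that the new predicates and their axioms live entirely on the parameter ($V$-)sorts so that the auxiliary sort $P$ and relation $R$ may be dropped to yield $T\ram_V$ with $(T\ram_V)^*_V = (T^*_V)\td$, and then transfer irreducibility, the Ramsey property, minimality/uniqueness, and the description of the expansion space from \propref{expandd} and \eqref{exp}. You spell out the $L\cup\{R\}$/$L$ bookkeeping a bit more explicitly than the paper does, but the argument is the same.
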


\prf  For simplicity we consider one sort $V$; form
 $T^*_V$ as above \defref{ramsey-def}.
 Let $\g$ denote the new `second-order' relations introduced in the * operation.
Apply   \propref{expandd} to $T^*_V$ to obtain 
   an irreducible universal theory $\widetilde{T}$ that has definable patterns at $\g$.
     Return now to the original sorts; call the result $T\ram_\forall$. 
   Note that $\widetilde{T} = (T\ram_\forall)^*$: the axioms of $\widetilde{T}$ are explicit and concern the new relations on the parameter sorts of $\g$, so they are visible already for $T\ram_\forall$.  
        It follows that 
    $T\ram_\forall $ is Ramsey.  If $T'$ is  an expansion of $T$ to an irreducible universal theory that is Ramsey,
 then $(T')^*$ has definable patterns with respect to the `second-order' relations introduced in the * operation,
  and so interprets $\widetilde{T}$  (in the  quantifier-free way described above \propref{expand} ).  It follows that
  $T'$ interprets $T\ram_\forall$.  
\eprf

%
%

When $V$ consists of all sorts, it follows from   \lemref{ramseyqe} that $ T\ram:=T\ram_V$ admits quantifier-elimination.  
We can apply these results to the Morleyzation of a complete first-order $T$.
Taking into account the uniqueness in \propref{expandd}, we obtain \thmref{expand2}, that we repeat below in a little more detail as \corref{expand2c}.

 Recall that a pair $T_1 \leq T_2$ of  universal theories satisfies {\em interpolation} if
  whenever  $R(x,y) \to S_2(y) \in T_2$ with $R \in L_1, S_2 \in L_2$ then for some $S_1 \in L_1$, 
$R(x,y) \to S_1(y) \in T_1$ and $S_1(y) \to S_2(y)$ in $T_2$.  

It is easy to see that  $T,T'$ are complete theories with quantifier elimination in languages $L,L'$
with $L \subset L'$, $T_\forall=T'_\forall|L$, and interpolation holds for the pair  $T_\forall,T'_\forall|L$, 
  then   $T = T'|L$.  (This is a special case of \lemref{interpolation}.) 

If $T$ admits quantifier-elimination, $T_1=T_\forall$ is the universal part of $T$, and $T_2$ is the universal theory of some expansion of a model of $T$, then it is clear that interpolation holds (with $S_1$ an $L$-formula equivalent in $T$ to $(\exists x)R(x,y)$.) 
In particular, interpolation holds between $T_\forall$ and the canonical Ramsey expansion of  $T_\forall$.

\begin{cor} \label{expand2c} (=\thmref{expand2})
Let $T$ be a complete theory.  
There exists an everywhere     Ramsey expansion $T\ram$ with this property: 
  if $T'$ is an everywhere Ramsey expansion of $T$ and $N' \models T'$, then  there exists an $L$- embedding
  $j:N' \to N$ with $N \models T\ram$, and so that the pullback of any definable subset of $N$ is definable in 
  $N'$.     
  
  $T\ram$ is unique up to bi-interpretability over $T$.  
The self-interpretations of $T\ram$ over $T$ form a group, $G\ram(T)$.
  \end{cor}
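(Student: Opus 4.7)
The plan is to reduce the statement to the universal-theory results already established, namely Proposition \ref{ramsey-univ} and Lemma \ref{ramseyqe}, together with the uniqueness clause of Proposition \ref{expandd}. First, Morleyize $T$ so that every formula of $L$ becomes atomic; then $T_\forall$ is an irreducible universal theory, and $T$ is recovered as the common complete theory of its existentially closed models. Apply Proposition \ref{ramsey-univ} taking $V$ to be the union (or product, as imaginary sort) of all sorts. This produces an irreducible universal expansion $T\ram_\forall$ of $T_\forall$ that is Ramsey everywhere, and is minimal with this property among universal expansions, in the sense that any other such universal expansion admits an interpretation of $T\ram_\forall$ over $T_\forall$.

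Next, invoke Lemma \ref{ramseyqe}: since $T\ram_\forall$ is Ramsey at every sort, any existentially closed model of $T\ram_\forall$ eliminates quantifiers. Because $T\ram_\forall$ is irreducible, all e.c.\ models share one complete theory; call it $T\ram$. Then $T\ram$ is a complete first-order theory with quantifier elimination, its universal part is $T\ram_\forall$, and its reduct to $L$ is $T$ (by \remref{interpolation} applied to the pair $T_\forall \subseteq T\ram_\forall$, for which interpolation is automatic by QE of $T$). This establishes existence.

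For the universal property, let $T'$ be any everywhere Ramsey expansion of $T$, and Morleyize it as well; then $(T')_\forall$ is an irreducible universal expansion of $T_\forall$ that is Ramsey everywhere. By the minimality in Proposition \ref{ramsey-univ}, there is an interpretation $\alpha$ of $T\ram_\forall$ in $(T')_\forall$ over $T_\forall$, i.e., a translation of the new predicates of $T\ram_\forall$ into qf $L'$-formulas in the same variables, preserving universal axioms. Given $N' \models T'$, define $N$ to be the $L\ram$-structure whose $L$-reduct is $N'|L$ and where each new predicate of $L\ram$ is interpreted by its $\alpha$-translate in $N'$; then $N \models T\ram_\forall$, hence embeds into an e.c.\ model $\tilde N \models T\ram$, and the composed $L$-embedding $j : N' \to \tilde N$ has the pullback property, since every $L\ram$-formula is by QE equivalent to a qf one, whose pullback under $j$ is by construction an $L'$-formula.

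Finally, uniqueness and the group structure on $G\ram(T)$ follow directly from the corresponding statements in Proposition \ref{expandd} applied to the auxiliary theory $T^*_V$: another minimal $T''$ would yield mutual interpretations with $T\ram$ over $T$, whose compositions are self-interpretations, hence isomorphisms (since endomorphisms of $\co(T^*_V)$ are automorphisms by \propref{prop1}); and composition and inversion of bi-interpretations give $G\ram(T)$ the structure of a group, identified with $\Aut(\co_\g(T^*_V))$ where $\g$ is the canonical second-order family. The main obstacle, modest in size, is the bookkeeping for translating between the three registers --- Robinson/universal, complete/QE, and interpretation/embedding --- and in particular verifying that the notion of ``interpretation over $T$'' arising from Proposition \ref{expandd} (a translation of qf formulas compatible with Boolean operations) matches the first-order notion needed for the universal property stated in the corollary; this is exactly where the Morleyization and Lemma \ref{ramseyqe} are used to collapse the two pictures.
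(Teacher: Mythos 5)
Your proposal is correct and follows essentially the same route as the paper's own proof: Morleyize $T$, apply \propref{ramsey-univ} at the union of sorts to obtain the universal expansion $T\ram_\forall$, invoke \lemref{ramseyqe} to conclude that e.c.\ models of $T\ram_\forall$ have QE and so determine a complete theory $T\ram$, and then derive the minimality, uniqueness, and group structure from \propref{expandd} via the interpolation remark. The only cosmetic difference is that you phrase the expansion of $N'|L$ directly through the interpretation $\alpha$ of $T\ram_\forall$ in $(T')_\forall$, whereas the paper phrases it through the equivalent choice of a homomorphism $J \to S(N')$; by \eqref{exp} these are the same data.
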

  
\prf   Here we may Morley-ize and so assume $T$ admits quantifier elimination.  
The universal theory $T_{\forall}$ admits a canonical Ramsey expansion  $T \forall \ram$
as a universal theory; this by \propref{ramsey-univ}.
   Let $M$ be an existentially closed model of $T \forall \ram$.  Then by \lemref{ramseyqe},
   $Th(M)$ eliminates quantifiers.  Let $T \ram = Th(M)$.   It is uniquely determined by the universal part of $Th(M)$
   which is just $T_{\forall} \ram$.

Minimality of   $T\ram$, as well as the fact that $T \subset T\ram$, follows from the minimality of   $ T\ram_\forall$ given by 
\propref{expandd}, 
taking into account the above remarks about interpolation.  

Let $T'$ be an everywhere Ramsey expansion of $T$; again we may assume $T'$ eliminates quantifiers.
Let $N' \models T'$.  Then we can expand $N'|_L$ to a model $N''$ of $T_{\forall} \ram$ (choosing a homomorphism
$J \to S(N')$, so that each basic definable set of $N''$ is also $N'$-definable.   Now $N''$ embeds into
some $N \models T\ram$ as it has the correct universal theory, giving the minimality statement.

Conversely, if $T'$ has the same minimality property, we may again assume $T'$ eliminates quantifiers
to prove first-order bi-interpretability with $T\ram$.  The minimality property shows that $T'_\forall$ is minimal in the
sense of universal theories, so in any case $T'_\forall$ and $T\forall \ram = T \ram _\forall$ are qf bi-interpretable over $T$. 
We may assume $T'_\forall = T\ram _\forall$.  
 As $T', T \ram$ admit QE, and have the same universal theory, they are now equal.  
 
Since $T\ram$ admits QE, any self-interpretation of $T\ram_\forall$ over $T$ as a universal theory, extends uniquely to a bi-interpretation of $T\ram$ over $T$ as a 1st-order theory.

\eprf

\ssec{Continuous logic version}  To see how this unifies Ramsey-type phenomena,
we also formulate the continuous logic version. 

  In continuous logic, as presented e.g. in \cite{hensonetal},  $V$ comes with a distinguished metric.   An $n$-place predicate $P$ on $V$ is   interpreted as a bounded real-valued function on $V^n$, uniformly continuous with respect to the metric.  
 A {\em universal theory} is a
  a family of assertions that the   values of a finite number of predicates $P_1,\ldots,P_k$
  lies in a given compact subset $C$ of $\Rr^k$:  $(\forall x)((P_1(x),\ldots,P_k(x)) \in C)$. 
  A {\em free pattern type} is a maximal universal theory in $L(X)$ whose restriction to  $L$ is $T_{\forall} $.    
 $p$ is {\em finitely satisfiable} in $(M,X)$ if for any   quantifier-free $\phi$ in $L(X)$
  any $\e>0$,  and any $a\in M^k$ there exists a (finite) 
$M_0 \subset M$   such that $(M_0,X|M_0) \models p$, and $b$ from $M_0^k$ with
$|X(a)-X(b)| < \epsilon$.   (Similarly for pattern types for externally definable sets.)

  Equivalently, there exists an elementary extension $(M^*,X^*)$ of $(M,X)$ and an embedding $f:M \to M^*$, such that $(M, f \inv(X^*)) \models p$.      \lemref{ramsey-basic} 
remain  unchanged.    We say that a  theory $T$ is a {\em Ramsey theory}  at  $V$ (or has the Ramsey property at $V$)   if   all free  pattern types for $T$ on $V$ are  definable.     
This is also equivalent to the definition given at the beginning of the section (taken verbatim, with $P$ interpreted as usual as 
real-valued.)

\ssec{Examples}
%
 \begin{example}  

\begin{enumerate}
 
\item Let $T$ be the theory of infinite sets $\Omega$, in the language of pure equality.  Then
$T\ram_{all} = DLO$.  

If $J=\co{T}$, $H$ a finite group acting on a sort $V$, we have in general $J_{V/H} = J_V^H$ (the $H$-fixed points of $J_V$.) 

If $V$ is the sort of    
ordered pairs in $T$, and $U = V/Sym(2)$ the sort of unordered pairs,  then $J_V$ is the two-atom Boolean algebra, and $J_U=J_V^{Sym(2)} = \{0,1\}$.   

\item   Infinite affine spaces $V$ over a finite field.  Then $T$  is a Ramsey theory at $V$,
and also at the sort $V^{[n]}$ of $n$-element subspaces of $V$; this is the
  affine  space Ramsey theorem, see \cite{GRS}.   A similar picture holds for projective spaces.  
  
  To study the sorts $V^n$, we may as well pass to the theory $Vect_{\Ff}$ of vector spaces over $\Ff$.
     Then $T$ is not Ramsey at the main sort $V$.   Indeed $T\ram$ is bi-interpretable with the theory of linearly ordered $\Ff$-spaces, such that each finite-dimensional vector space is lexicographically ordered with respect to some basis.  
(Note that this is not to be the same as a `random' linear ordering adjoined to $T$,
that makes an appearance in \cite{KPT}.)     
     
\item    Affine spaces $V$ over $\Qq$ form a Ramsey theory at $V$;
the  only    maximal patterns in $L[X]$ are the ones asserting $X = \emptyset$, or $X=V$.
This is essentially equivalent to Van den Waerden's theorem on arithmetic progressions \cite{vdW}, \cite{GRS}. 
Any consistent formula $\theta(x_1,\ldots,x_n)$ is  implied by another of the form:  
$\bigwedge_{i \geq 2} (x_i-x_0) =   \alpha_i (x_1-x_0)$.  And this formula is realized in any 
sufficiently long arithmetic progression $v_0,v_0+v ,\cdots,v_0+m $.   By Van den Waerden, 
for any set $A \subset V$, $\theta$ is realized either in $A$ or in $V \m A$; i.e. we can find an arbitrarily good approximation $M_0$ to a model, such that $(M_0,A) \models (\forall x)(x \in A)$ or 
  $(M_0,A) \models (\forall x)(x \notin A)$.   (Conversely, given a coloring of arbitrarily long intervals
  in $c$ colors, with no monochromatic arithmetic progression of length $l$, a compactness argument gives a coloring of $\Qq$ with no such arithmetic progresssion; but a model does contain a long arithmetic progression.)

 \item  Let $T$ be the theory of  $\Qq$-vector spaces $V$.  Then $T\ram$ includes the theory of ordered $\Qq$-vector spaces.  By contrast with e.g. \cite{ehn1}, it cannot  be interpreted in the
  the random linear ordering expansion of $T$.   It would be good to determine $T\ram$; 
 is it generated by DOAG along with the unary sets of the Ramsey expansion associated with the $\Qq^*$-action,
 as in   \exref{Gactions}? 
   \item  
 
  Let $V$ be an irreducible variety defined over a field $K$, and admitting a transitive action of
an algebraic group $G$.  
Consider the invariant Zariski structure on $V$:    a basic $m$-ary
relation   is a $G$-invariant $K$-Zariski closed subset of $V^m$.  

For $V=\Aa^1$,  $G$ the two-dimensional group of affine transformations, this theory is Ramsey at $V$.    This can be shown as a consequence of the generalized polynomial van der Waerden Theorem of \cite{bergelson-leibman}, though it uses only a small part of the strength of that theorem.  This is because for any formula $\phi(x_1,\ldots,x_n)$ consistent with the theory, there exist $\alpha_2,\ldots,\alpha_n \in K^{alg}$ such that for any $a \in V$ and $d \in K \m (0)$, 
$V \models \phi(a,a+d,a+\alpha_2 d , \cdots, a+ \alpha_n d)$; and using van der Waerden over
$K(\alpha_2,\cdots,\alpha_n)$ to find $a \in V, d \in K^*$ such that   
$\phi(a,a+d,a+\alpha_2 d , \cdots, a+ \alpha_n d)$ is monochromatic.  

In particular, it follows that affine spaces $V$ over an arbitrary infinite field $K$ are Ramsey.

\item   Hilbert spaces (restricted to unit ball). Here a unary predicate $X$ is interpreted not as a subset, but as a uniformly  continuous function on the unit ball.  The basic  definable predicate here is the norm $X(v) = |v|$.
Any continuous function $f(|v|)$ of the norm is definable, hence determines a pattern type.  One may guess that
these are the only pattern types, and indeed this is a central theorem of Dvoretzky-Milman \cite{milman}
(see \cite{milman2}, Theorem 1.2).     
 
\item  Let $T=\widetilde{BA}$ be the theory of atomless Boolean algebras; the main sort will be denoted $B$, and we will also consider $B^n$ for $n=1,2,\cdots$.  Let  $B_n \subset B^n$ denote the $n$-tuples of  pairwise disjoint nonzero elements, whose sum is $1$; then  $B^{[n]}=B_n/Sym(n)$ is the sort of $n$-partitions of $1$, or equivalently the sort coding  subalgebras of $B$ of size $2^n$. The  dual Ramsey Theorem of 
 \cite{dual} states precisely that $T$ is a Ramsey theory  in the sorts $B^{[n]}$.  
 
Let us compute $T\ram$ in full.  If $B$ is a Boolean algebra with $n$ atoms,
 and a linear ordering $a_1< \cdots < a_n$ on these atoms.  Then an element of $B$ can be identified
 with a subset of $\{a_1,\ldots,a_n\}$ or equivalently an $n$-string of zeroes and ones; viewed this way, we have the reverse lexicographic ordering on $B$, which agrees with the given ordering on the atoms.  An ordering of $B$ obtained in this way will be called an Rlex ordering.     This gives a 1-1 correspondence between finite linear orderings, and rlex-ordered finite Boolean algebras; it extends to an equivalence between the category $BAO$ of rlex-ordered  finite Boolean algebras, with injective, order-preserving Boolean homomorphisms,  and the category of finite linear orderings with   surjective maps $f$ such that $a<b$ iff $f \inv(a) <_{rlex} f \inv(b)$.  This makes it easy to see that $BAO$ 
 admits amalgamation.    A subalgebra  of an Rlex-ordered boolean algebra is also Rlex-ordered, as one can check, with respect to its own atoms.   It follows that $BAO$ is a Fraiss\'e class, with an $\aleph_0$-categorical 
 amalgamation limit $\widetilde{BAO}$.   Note that $B_n$ splits into $n!$ types in $\widetilde{BAO}$, differing only by
 rearrangement of the variables.  Using this one sees easily that a substructure of a model of 
  $\widetilde{BAO}$ realizing all $\widetilde{BA}$   types, also realizes all $\widetilde{BAO}$-types.  This will be useful for checking the Ramsey property.  
 
 Now (up to bi-interpretability over $\widetilde{BA}$) we have  \[T\ram_{all} = \widetilde{BAO}.\]  This is easy to deduce from the previous statement.  Viewed as a definable set in $\widetilde{BAO}$, $B_n$ is definably isomorphic to 
 $B^{[n]} \times Sym(n)$ (map $(a_1,\ldots,a_n) \in B_n$ to the pair $(b,\si)$, where $b$ is the image of $(a_1,\ldots,a_n)$ in $B^{[n]}$ and $\si(a_i)<\si(a_j)$ iff $i<j$.)   It follows that the sort $B_n$ is Ramsey,
 and $B^n $  similarly admits a 0-definable embedding into   a product of $\union_{k \leq n} B_k$ times a finite set.  
 (describing each $a_i$ as a word in the linearly ordered atoms of the algebra generated by $a_1,\ldots,a_n$.) 
 
 At this point the Hales-Jewett theorem becomes visible too, as a consequence of Ramseyness of $\widetilde{BAO}$.  We may think of the Boolean algebra of all subsets of $\{1,\ldots,N\}$;
 then a word in $n$ letters $1,\ldots,n$ of length $N$ can be presented as a $n$-tuple of disjoint elements of $B$,
 with sum $1$.  Let $B_n^* = \{(v_1,\ldots,v_n) \in B_n:  v_1 < \cdots < v_n \}$.  This is a complete type of
  $\widetilde{BAO}$.  
 Let $c$ be a finite coloring of  $B_n$.  Then $c$ (lifted to an elementary extension, then restricted) is definable on some elementary submodel $M$  of  $\widetilde{BAO}$; hence  in particular on a set of the form 
 \[ \{ (v_0 \union v_1,v_2,\cdots,v_{n}), (v_1,v_0 \union v_2,v_3,\cdots,v_{n}), \cdots (v_1,\cdots,v_0 \union v_{n} ) \}\]  where $(v_0,\ldots,v_n) \in B_{n+1}$ and $v_0<\cdots<v_n$.   Since $B_n^*$ is a complete type, $c$ must be constant on this $n$-element  set (called a combinatorial line.)


\end{enumerate}
\end{example}

The strongly minimal theories in (1-6) have a small $T\ram$.  
At the other extreme we have disintegrated strongly minimal sets, specifically free group actions.
Here the  canonical Ramsey expansion is essentially the same construction -
 up to Stone duality - as the universal minimal flow of topological dynamics.   From this point of view,
the canonical Ramsey expansion can perhaps be viewed as a relational generalization of the  universal minimal flow.

\begin{example}\label{Gactions}  Let $\G$ be a group, and $T$ the theory of free $\G$-actions on a set $V$.
Here $\G$ is viewed as discrete, and we assume for the sake of the exposition that $\G$ is infinite, though the same will hold
in the case of finite $\G$.  
 Form $T^*$ as above \defref{ramsey-def}, and let
$J:=\CU\ram(T)_V= \co(T^*_V)$, $\LS\ram=\LS(T^*_V)$.
 Then $J$
is a Boolean algebra $B$ with $\G$-action, and no additional structure.  The Stone space $S$ of this algebra is a compact space  with continuous $G$ action.  We will now show that it  is the universal minimal flow of $\G$.

For the Boolean algebra structure, see \exmref{completeba}.  The natural $\G$ action on types is clearly definable in $\LS\ram$: 
$\g \cdot p = q$ if  $R(a,x)  \iff \neg R(\gamma(a),y)$ is omitted in $(p,q)$.   Using the quantifier elimination enjoyed by $T$, it is easy to see that this generates all of $\LS\ram$.  For instance, when $\G=\Zz$ with generator $s$,
$p$ omits the pattern of three consecutive elements iff $p \meet (s \cdot p) \meet (s^2 \cdot p) = 0$ (in the Boolean algebra.)

Minimality:  suppose $S'$ is a closed nonempty $\G$-invariant subspace of $S$.  Let $B'$ be the Boolean algebra 
of clopen subsets.  We then have a surjective $\G$-Boolean algebra homomorphism $B \to B'$.  It must be an isomorphism, since $J$ is e.c.   But then $S'=S$.

Universality:  let $S'$ be any  minimal flow of $\G$. We must find a $\G$-invariant continuous map $S \to S'$.

First note that any minimal flow  $S'$ is covered by a totally disconnected minimal $\G$-flow   $S^*$, on which $\G$ acts without fixed points; namely any minimal subglow of the  $\G$-flow of ultrafilters on $\G$.  Indeed if we fix $s_0 \in S'$, the map $\g \mapsto g s_0$
extends (uniquely) to a continuous map $f: \G^*\to S'$, where $\G^*$ is the space of ultrafilters on $\G$;  and $f$ is $\G$-invariant.  
%
 Given $1 \neq g \in \G$, it is easy to partition any $g^\Zz$-orbit on $S'$  into two or three disjoint subsets $x$ such that $x \meet gx = \emptyset$; putting these partitions together we find a partition of $\G$ into at most three sets $x$ with the same property.  Thus no ultrafilter  on $\G$ is fixed by $g$.  
 
 Hence, ignoring the topology, $S^*$ is a model of $T$. It can be viewed as a parameter sort in a model of $T^*$;
the $R$-type space over $U$ identifies with the Boolean algebra of all subsets of $S'$.  So there exists a homomorphism from this $\G$-algebra  to $J$; it restricts to a homomorphism from
the algebra of clopen subsets of $S^*$ to $J$; dually we find a $\G$-invariant continuous map $S \to S'$.  It follows that $S$ is a universal minimal flow for $\G$.  \end{example}

   See \propref{B5} and \remref{Gactions2} for an  alternative approach.  
   
   \begin{example}  \label{Gactions-e} As promised earlier, we prove the  the existence of a complete pp type $p \subset J=Core(T^*_V)$, such that  $G$ induces a 
 countably infinite group   of automorphisms of $p$.    Since $Aut(p)$ is quasi-compact, this implies that $Aut(p)$  cannot be Hausdorff.       
 
 Let $Y$ be a totally disconnected compact  flow of $\G$, such $Aut_\G(Y)$ (the group of homoeomorphisms of $Y$ commuting with $\G$)
 is countable,  and any closed subflow of $Y \times Y$ projecting onto $Y$ in either direction is either all of $Y \times Y$ or a finite union of automorphisms of $Y$.
 The Chacon example described in \cite{deljunco} is an instance; more generally,  with $\G=\Aut_\G(Y)=\Zz$, the totally disconnected graphic minimal sets of \cite{jauslander}.  
 
  Let $S$ be the universal minimal flow of $\G$.  If $f, g: S \to \G$ are two surjective $\G$-morphisms, then the image of $S$ in $Y \times Y$ under $(f,g)$ is a minimal subflow
  of $Y \times Y$, hence it must be the graph of an element $\alpha$ of $\Aut_\G(Y)$.  Hence $g=\alpha \circ f$, so the kernels of $f$ and $g$ coincide; so we have a closed, $\G$-invariant
  equivalence relation $E$ on $S$ such that $S/E$ is an isomorphic copy of $Y$; we rename it as $Y$;  this incarnation of $Y$ comes with a canonical quotient map $\pi: S \to Y$.
 
 Let $U$ be a clopen subset of $Y$, and let  $u = \pi \inv(U)$.  Then $u$ is an element of the Boolean algebra $B$ of clopen subsets of $S$, that we have identified with $J=Core(T^*_V)$.
 If $u' = g(u)$ for some $g \in Aut(J)$, then $g$ induces an automorphism of $S$; it respects $E$ and thus induces an automorphism $g_Y$ of $Y$; conversely $g|p$ is determined
 by $g_Y$.  This shows that $p$ and $G_p$ are countable.  
  
   \end{example}

   \begin{question} \label{dynamics}  \begin{enumerate}
   \item 
   Investigate   further the connection of the topological dynamics of a group $G$ to the model theory of $T=T_G$.    Compare the theory of joinings of dynamical systems to the theory of orthogonality of pp types in $\CU$.
It seems plausible that the maximal Hausdorff quotient of $\CU$  corresponds to the distal flows.  
   
   \item  Computing the canonical Ramsey expansion at   other sorts remains interesting;
 for pairs, we  certainly find a linear ordering and hence many other linear orders obtained by Boolean combinations
 with unary sets; I am not sure if these are all, and if anything further is needed at the ternary level and above.
 
\item  Presumably, the model completion of a single unary function behaves similarly, with 'colorings' that on the tree of 
 ancestors of a given element $a$ depend only on the distance from $a$, periodically or 'almost periodically' as above.

 \end{enumerate}

 \end{question}
 
\begin{example}  Let $D$ be a non-Zilberian strictly minimal set, $T=Th(D)$.  Assume more specifically that the language of $D$ is generated by a  symmetric ternary relation $R$, that we view as a set of unordered triples.  
Further assume $R$ occurs for at most $n-2$ unordered triples from any  $n$-element subset of $D$ ($n \geq 2$.)
   In this situation we encounter the striking orientation 
  construction of \cite{evans}.  Namely, by \cite{caro} Theorem 2.3, for any model $M$  there exist partial functions $f,g$ such that   
  \[ R(M) =\{(x,f(x),g(x)):  x \in D(M) \} \]
(let $f(a),g(a)$ be the second and third elements of the orientation if $a$ is the first element of a triple $\{a,b,c\} \in R$,
under the given orientation, and $f(a)=g(a)=a$ otherwise.)   Then $T\ram$ at the sort $D^2$ must include
such partial functions $(f,g)$.  We can extend them to total functions, setting $f(a)=a$ or $g(b)=b$ where undefined (possibly they are globally defined
in one / all e.c. models of $T\ram$.)  In any case we obtain an action of the free semigroup on two elements, giving a theory $T_e$ interpretable
in $T\ram$, and with $T_e \ram = T\ram$.   \end{example}
 \end{section}

\appendix

\begin{section}{Infinitary definablity patterns and the Ellis group}
\label{ellis-sec}
 
There is a standard parallel between definable types and invariant types in model theory; 
in the latter, $(d_px) \phi(x,y)$ is not definable, but rather a union of type-definable sets.  
We consider now  a richer language $\LSb$ reflecting partial infinitary definability of this kind.\footnote{While we will present it directly, it can also be treated as a special case of the construction of $\CU$,
applied to an infinitary Morleyzation 
$\bT$ of $T$, obtained
by  adding a predicate symbol for {\em every complete type} $r$, and axioms $r \to \alpha$ for each $\alpha \in r$.  This is  a primitive universal theory,
whose e.c. models are precisely the models of $T$ realizing all types over $\emptyset$, with the expected interpretation of $r$.
We can form $\co(\bar{T})$; it is equivalent to $\CUb$  as defined below.   Any relation   of $\co(\bar{T})$
  is easily seen to be equivalent to a conjunction of ones of the form $\R_t$ considered below.  This requires extending the $\CU$ construction to primitive  universal theories.}

  The sorts of $\LSb$ are 
the same as those of $\LS$, i.e. indexed by a set $\g$ of formulas of $L$, and a distinguished set of variables.  
   We   restrict $\g$ to have    at most countably many variables of each sort of $L$.   
 
 $\LSb$ contains in particular a relation symbol $\R_t$ for each tuple
$t=(\phi_1,\ldots,\phi_n;\alpha)$, where $\phi_1,\ldots,\phi_n$ are as before formulas $\phi_i(x,y)$,
but now $\alpha(y)$ is a complete type (for a given $\phi_i$, we take  $y$ to be a finite set of variables, 
while all but finitely many variables of $x$ are treated as dummy in $\phi_i$.)

The interpretation of $\R_t$ in a type space $S=S_\g M$ will be

  \[\R_{t} ^{S} =\{(p_1,\ldots,p_n) \in S^n:   \neg (\exists a \in \alpha(M)) \bigwedge_{i \leq n} (\phi_i(x,a) \in p_i) \]

This defines a closed subset of $S^n$.

It is clear that the set of true {pp} sentences is the same for all  models $M$ of $T$ that realize all finitary types over
$\emptyset$.   This determines an irreducible {primitive} universal theory $\TSb$.  
The earlier considerations go through:  $\TSb$ has a compact topological model, hence it is 
 ec-bounded, hence it has a unique universal e.c. model $\CUb$.  
 
 Let $\lambda_T$ be the number of 
 finitary types of $T$ over $\emptyset$.    A model $M$ realizing all types of cardinality $\lambda_T$ exists, and thus
 $|\CUb| \leq 2^{\lambda_T}$.

\begin{lem}\label{aut2}  
\begin{enumerate}
\item  Let $A$ be a  substructure of $S=S_x(M)$.  The $\LSb$-homomorphisms $A \to S$ form a closed set 
$\Hom_{\LSb}(A,S) \subset S^A$, containing the image of $\Aut(M)$ under $\si \mapsto \si_* |A$.
\item Let $A  \subset S$.   
Assume $M$ is $\aleph_0$-homogeneous.   Then the image of $\Aut(M)$
is dense in $\Hom_{\LSb}(A, S)$. 
\end{enumerate}
\end{lem}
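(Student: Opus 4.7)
The plan is to dispatch (1) essentially formally and concentrate the real work on (2). For (1), every basic atomic relation $\R_t$ of $\LSb$ with $t=(\phi_1,\ldots,\phi_n;\alpha)$ is by its very definition a closed subset of $S^n$: it is the intersection over $a \in \alpha(M)$ of the clopen conditions $\{\bar p : \phi_i(x_i,a) \notin p_i \text{ for some } i\}$. Hence for each tuple $\bar p \in A^n$ with $A \models \R_t(\bar p)$, the set $\{f \in S^A : f(\bar p) \in \R_t^S\}$ is the preimage of a closed set under the continuous evaluation $S^A \to S^n$, so it is closed in $S^A$; intersecting over all such $(t,\bar p)$ yields $\Hom_{\LSb}(A,S)$. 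For the stated containment, each $\si \in \Aut(M)$ permutes $\alpha(M)$ setwise for every complete type $\alpha$, so $\si_*$ trivially preserves each $\R_t$.

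The real content is (2), and the guiding idea is that the infinitary enrichment from $\LS$ to $\LSb$ --- allowing the parameter predicate $\alpha$ to be any complete finitary type over $\emptyset$ --- is precisely what is needed to repackage an arbitrary finite pointwise-approximation condition as a single basic atomic $\LSb$-relation on $(p_1,\ldots,p_k)$. The homomorphism hypothesis on $f$ then automatically produces the required witness in $A$. Concretely, fix $f \in \Hom_{\LSb}(A,S)$ and a basic neighborhood of $f$ in $S^A$, presented as the set of $g$ satisfying finitely many conditions $\phi_{i,j}(x,b_{i,j}) \in g(p_i)$ (each $\phi_{i,j}$ permitted to be a negation, as $g(p_i)$ is a complete type), all of them satisfied by $g=f$. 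Collect the parameters into one finite tuple $\bar b$ from $M$, set $\alpha(\bar y) := \tp(\bar b/\emptyset)$, and let $\psi_i(x,\bar y)$ be the conjunction of the $\phi_{i,j}(x,\bar y)$ for the given $i$, with coordinates of $\bar y$ filling the roles of the various $b_{i,j}$; the neighborhood condition on $g$ then reads $\psi_i(x,\bar b) \in g(p_i)$ for all $i$.

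Now form the $\LSb$-atomic relation $\R_{t'}$ with $t' = (\psi_1,\ldots,\psi_k;\alpha)$; this is a legitimate basic relation of $\LSb$ precisely because $\alpha$ is a complete type in the finite tuple $\bar y$. The element $\bar b \in \alpha(M)$ already witnesses $\psi_i(x,\bar b) \in f(p_i)$ for each $i$, so $S \not\models \R_{t'}(f(p_1),\ldots,f(p_k))$. By the contrapositive of the $\LSb$-homomorphism property of $f$, $A \not\models \R_{t'}(p_1,\ldots,p_k)$; since $A$ inherits its $\R_{t'}$ from $S$, whose interpretation uses $M$ as witness set, there exists $\bar c \in \alpha(M)$ with $\psi_i(x,\bar c) \in p_i$ for every $i$. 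By $\aleph_0$-homogeneity of $M$ some $\si \in \Aut(M)$ maps $\bar c$ to $\bar b$; then $\si^{-1}(\bar b)=\bar c$, so $\psi_i(x,\bar b) \in \si_* p_i$ for each $i$, placing $\si_*|A$ in the prescribed neighborhood.

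The only step requiring genuine care is this packaging: the key gain from $\LSb$ over $\LS$ is exactly that $\alpha$ may now be a complete, rather than quantifier-free, type in the parameter variables, so that a single relation $\R_{t'}$ is adequate to encode any finite list of approximation conditions. Once this is in place, the rest of (2) is a clean combination of preservation of $\R_{t'}$ under the homomorphism with standard $\aleph_0$-homogeneity, and no further subtlety intervenes.
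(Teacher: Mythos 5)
Your proof is correct and follows essentially the same route as the paper's: you package the finitely many basic-neighborhood conditions into a single $\R_{t'}$ with $\alpha$ the complete type of the collected parameter tuple, apply the contrapositive of the homomorphism property to get a witness $\bar c \in \alpha(M)$ in $A$, and finish with $\aleph_0$-homogeneity. Your part (1) merely fills in the "clear from definitions" step, correctly.
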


\prf  (1) is clear from the definitions.


 (2)   Given finitely many types $p_1,\ldots,p_m \in A$, let $q_i = f(p_i) $, and consider   
       any neighborhood  $U_i$ of $q_i$ in $S$.   We have to find $\si \in \Aut(M)$ with $\si_*(p_i) \in U_i$.
We can find $c$ from $M$ and formulas $\phi_i(x,y)$ such that $U_i$ is defined by $\phi_i(x,c)$.
       Let $r=tp(c)$. Then $S \models \neg \R_{\phi_1,\ldots,\phi_m;r}(q_1,\ldots,q_m)$.
 Since $f$ is an $\LS$-homomorphism, $S \models \neg \R_{\phi_1,\ldots,\phi_m;r}(p_1,\ldots,p_m)$.
By definition of this symbol,  there exists $c'$ in $M$ with $r(c')$ and  $(d_{p_i}x)\phi_i(x,c')$ for each $i$.
 Let $\si \in \Aut(M)$ satisfy $\si(c')=c$ (using the $\aleph_0$-homogeneity of $M$.)  Since
 $\phi_i(x,c') \in p_i$, we have $\phi_i(x,c) \in \si_*(p_i)$.  Thus $\si_*(p_i) \in U_i$, as required.
  
\eprf

Let $\bar{G}=\Aut(\CUb)$,   $\bar{\fg}= \{g \in \bar{G}: (\forall U \in \ft)(gU \meet U \neq \emptyset)\} $,
and $\bCG = \bar{G}/\bar{\fg}$.  


We record the anaolog \lemref{size0}, moving up one power set: 
   
\begin{lem}\label{size}   Let  $\lambda=\lambda_T$, the number of types of $T$ over $\emptyset$ in finitely many variables. 

\begin{enumerate}
\item 
$ |\CUb| \leq 2^\lambda$.
\item  
 $ |\bar{G}| \leq \beth_2(\lambda)$
 \item   $ |\bCG| \leq 2^{\lambda}$
\end{enumerate}
\end{lem}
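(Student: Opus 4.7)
The plan is to adapt the proof of \corref{size0} one power-set level upward, replacing the Löwenheim--Skolem model of size $|L|$ by a model $M \models T$ of cardinality $\lambda = \lambda_T$ realizing every finitary type over $\emptyset$; such an $M$ exists by the standard realization-of-types argument. The primitive universal $\LSb$-theory of $S(M)$ then coincides with $\TSb$, because every finitary configuration of types demanded by $\TSb$ can be witnessed inside $M$. The ultrafilter/retraction argument of \lemref{compact} goes through verbatim (its proof uses only that $S(M)$ is compact in its pp topology, which remains true for the $\LSb$-structure since each $\R_t$ is closed in the finer logic topology on $S(M)$), producing an $\LSb$-embedding $\CUb \hookrightarrow S(M)$. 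Sort by sort, $|S(M)| \leq 2^{|M|} = 2^\lambda$, which gives (1). Part (2) is then immediate: every $\sigma \in \bar{G} = \Aut(\CUb)$ is in particular a sort-preserving self-map of $\CUb$, so $|\bar{G}| \leq |\CUb|^{|\CUb|} \leq (2^\lambda)^{2^\lambda} = \beth_2(\lambda)$.

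For (3) I would mimic the dense-subset construction in \propref{dense}. Identifying $\CUb$ with its image $J$ in $S(M)$, the logic topology $\ft_l$ on $S(M)$ admits a clopen basis $B_l$ consisting of the sets $\{p : \phi(x,\bar{a}) \in p\}$ with $\phi \in L$ and $\bar{a} \in M^{<\omega}$; hence $|B_l| \leq |L| \cdot \lambda = \lambda$ (noting that $|L| \leq \lambda$ after identifying formulas with the definable subsets of the finitary type space that they cut out). Every basic pp-closed subset of $S(M)$, namely $\R_t$ for $t = (\phi_1,\ldots,\phi_n;\alpha)$, is the complement of $\bigcup_{\bar{a} \models \alpha \text{ in } M} \bigcap_i \{p_i : \phi_i(x_i,\bar{a}) \in p_i\}$, so it is $\ft_l$-closed, and therefore $\ft_l$ refines the pp topology. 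Selecting one representative of each nonempty intersection $u \cap J$ with $u \in B_l$ produces a pp-dense set $D \subseteq J$ with $|D| \leq \lambda$. Any $\sigma \in \bar{G}$ fixing $D$ pointwise lies in $\bar{\fg}$: for every nonempty pp-open $U \subseteq \CUb$, pick $d \in U \cap D$, and then $d = \sigma(d) \in U \cap \sigma(U)$, so $U \cap \sigma(U) \neq \emptyset$. Consequently the restriction map $\bar{G} \to \CUb^D$ factors through an injection $\bCG \hookrightarrow \CUb^D$, giving $|\bCG| \leq (2^\lambda)^\lambda = 2^\lambda$.

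The main point requiring attention is the cardinality estimate $|B_l| \leq \lambda$, which reduces to $|L| \leq \lambda$. This is automatic once one normalizes the language so that distinct formulas cut out distinct definable subsets of the finitary type space (the Morleyization we may always perform), since the number of such subsets is at most the number of clopens in $S(T)$, bounded by $\lambda$ after taking $M$ into account. Everything else is routine cardinal-arithmetic repackaging of the argument already carried out in \corref{size0}.
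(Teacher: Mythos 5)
Your proof is correct and follows exactly the route the paper itself takes (the paper's proof is just the terse remark that (1) was observed above, (2) is immediate, and (3) goes ``as in \lemref{size0}''): embed $\CUb$ into $S(M)$ for an $M$ of size $\lambda$ realizing every finitary type, bound automorphisms crudely as self-maps of $\CUb$, and then rerun the dense-subset argument of \propref{dense} and \corref{size0}(3). The one spot where your write-up is slightly loose is the justification that $|L|\le\lambda$ after normalization; ``bounded by $\lambda$ after taking $M$ into account'' does not by itself give it (definable subsets of $M$ only yield the bound $2^\lambda$). What you actually want is the standard Stone-space fact that an infinite compact zero-dimensional Hausdorff space with at most $\lambda$ points has at most $\lambda$ clopen sets — one separating clopen for each pair of points already generates the whole Boolean algebra — so each Lindenbaum algebra over $\emptyset$ has size at most $|S_n(\emptyset)|+\aleph_0\le\lambda$. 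With that one clarification in place, the argument is complete and matches the paper's.
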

\prf (1) was already observed; (2) is an immediate consequence.  (3)  is proved as in \lemref{size0}.

 \eprf

\begin{rem} 
 \begin{enumerate}
\item  Any model $A$ of $\TS$ has a canonical `minimal' expansion $A_{min}$ to $\LSb$, 
where 
\[ \R_{\phi;r} \iff \bigvee_{\alpha \in r} \R_{\phi,\alpha} \]
We have $A_{min} \models \TSb$, since if a {pp} sentence $\alpha$ holds in $A_{min}$, say witnessed by $a_1,\ldots,a_n$, then any instance of $\R_{\phi;r}(a)$ holds only because some stronger statement
 $\R_{\phi,\alpha}$ holds; if $\TSb$ rules out $\alpha$ it certainly rules out the stronger version, but that
 involves only $\LS$, whereas $\TSb | \LS = \TS$.
\item   In particular there exists a homomorphism $ \bar{\iota}:  \CU_{min} \to \CUb$.  It restricts
to a homomorphism  $\iota: \CU  \to \CUb| \LS$; this must be an embedding since $\CU$ is e.c.
\item   The embeddings $\CU \to \CUb \to S(M)$ induce maps $\CU/\EL \to \CUb/\EL \to S(M)/\EL$;
since the composition $\CU/\EL \to S(M)/\EL$ is bijective, the two intermediate maps must be too.   
\item  If $\iota$ is bijective, then every invariant type $p$ of $T$ is definable ($p$ is represented by an element of $\CUb$; since $\bar{\iota}$ is   surjective, it must be represented in $\CU_{min}$, which means that
if a type $q(y)$ is contained in $(d_px)\phi(x,y)$, then so is some formula containing $q$; then use compactness.)  
\item  Thus in general $\iota$ is not bijective.  By \propref{1}, it follows in this case that $\CU \not \cong \CUb|\LS$ and in fact there is no embedding $\CU \to \CUb|\LS$.  
\item Remaining in the case that $\CU \not \cong \CUb|\LS$, let $M$ be an  $\aleph_0$-saturated, $\aleph_0$-homogeneous model 
  of $T$, and let $J$ be a copy of $\CU$ in $S(M)$.    Then a retraction $r: S(M) \to J$ cannot be approximated by automorphisms of $M$.  (Otherwise
by  \lemref{aut2}(1) it would be a $\LSb$-homomorphism;  restricted to some image  of $\CUb$
in $S(M)$ it must be an $\LSb$-embedding, yielding in particular an embedding of $\CUb|\LS$ into $\CU$.)
This contrasts with the retraction of $S(M)$ to the image of $\CUb$, and appears to indicate  that $\CU$ cannot be constructed purely using the  topological dynamics of $\Aut(M)$ acting on $S(M)$.

\end{enumerate}
 \end{rem}

\begin{example}\label{lowerbounds-b}  
   There are countable theories
                with    $|\CUb|=\beth_1,              \  |\bCG|=\beth_2$ .  (Compare \exref{lowerbounds}.)         
\begin{enumerate}  
 
\item  Take   the model completion of the theory of graphs with infinitely many unary predicates.  
Let $M$ be $\aleph_0$-saturated of cardinality continuum.  
We see that there are $\beth_2$ invariant types over $M$, with a choice of $0/1$ over each
of the continuum many types over $\emptyset$.  So $|\CUb| = \beth_2$.

\item  To see that one can have $|\bCG| \geq \beth_2$, 
let $L$ have two sorts $A,B$, and infinitely many independent unary predicates $P_n$ on $B$.
A basic relation $R \leq A \times B^2$ is given, and $T_\forall$ asserts that for any $a \in A$, 
$R(a)$ is a tournament on $B$;  further, $R(a)$ respects the lexicographic order:
\[ (\forall x,y,y') \bigwedge_{i<n} (P_i(y) \iff P_i(y')) \wedge \neg P_n(y) \wedge  P_n(y')  \to R(x,y,y') \]
   For $\alpha \in 2^{\omega}$,  let $Q_\alpha = \meet_n P_n ^{\alpha(n)}$, so that the $Q_\alpha$ are the complete types   with respect to the  unary predicates.  Let $\bJ$ be an embedded image of $\CUb$ in $S(M)$.  For any $p \in \bJ$,   $(dp_x)R(x,y,y')$  defines a linear ordering on the sort $B$, so that 
$Q_\alpha < Q_\beta$ if $\alpha$ is lexicographically strictly below $\beta$.  
For any subset 
$W$ of $2^{\omega}$, there exists an automorphism $\si_W$ of $\CUb$, such that $\si_W(p),p$
agree above $Q_\alpha$ iff $\alpha \in W$.  This is a copy of the Hausdorff compact (and separable) group 
$(\Zz/2 \Zz)^{2^{\aleph_0}}$, and shows that $|\bCG| \geq \beth_2$.

    \end{enumerate}
       \end{example}

Here is an example where $\CG, \bCG$ differ.  

\begin{example} \label{tournaments} Let $L$ have two sorts $A,B$, and infinitely many constants $b_1,b_2,\cdots $ in $B$.
A basic relation $R \leq A \times B^2$ is given, and $T_\forall$ asserts that for any $a \in A$,
$R(a)$ is a tournament on $B$; i.e. for $R(a,b,b)$ never holds, and for $b \neq b' \in B$ precisely one of $R(a,b,b')$ and $R(a,b',b)$ hold.
Further, $R(a,x,b_j)$ holds iff $x=b_i$ for some $i<j$.  
$T$ is the model completion.  Let $x,y$ be variables of sorts $A,B$
respectively, and consider the $x$-sort of $\CU$ and $\CUb$.  Then $\CU_x$ reduces to a single point $p$;
where $(dp_x)R(y,y')$ defines a linear order.  On the other hand $\CUb_x$ has two points $p,q$;
$(d_px)R(y,y')$ and $d_qxR(y,y')$ are both linear orderings, opposing on the generic type of $T$
(i.e. on nonconstant elements.)
Thus $|G|=1$, $|\bar{G}| = |\bCG| = 2$.     
\end{example}

%
%
%
%

\ssec{{The Ellis group}}
 In order to  compare with definitions of the Ellis group in the literature  (see \cite{kns}),
we consider a sort $\CU_x$ of $\CU$, corresponding to the set $\gamma_x$ of all formulas 
with distinguished variable $x$ (and some countable set of parameter variables for each sort.)


\begin{cor} \label{ellis} Assume $M$ is an   $\aleph_0$-saturated, $\aleph_0$-homogeneous model of $T$.    Let $E_M$ be the Ellis group associated with the action of $\Aut(M)$ on $S:=S_x(M)$.  Then $E_M \cong  \Aut(\CUb)$.
\end{cor}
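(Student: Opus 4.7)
The plan is to identify the enveloping semigroup $E(S)$ of the $\Aut(M)$-flow on $S = S_x(M)$ with the composition semigroup $\End_{\LSb}(S)$, pick a concrete idempotent in a minimal left ideal coming from a retraction onto a copy of $\CUb$ inside $S$, and then compute the resulting Ellis group by restricting endomorphisms to that copy.

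For the first two steps, \lemref{aut2} immediately gives $E(S) = \End_{\LSb}(S)$: part (1) says the image of $\Aut(M)$ in $S^S$ under $\si \mapsto \si_*$ lies in the closed set $\End_{\LSb}(S) = \Hom_{\LSb}(S,S)$, and part (2) (with $A = S$, using the $\aleph_0$-saturation and homogeneity of $M$) says this image is dense there, so by definition of $E(S)$ as the closure of $\Aut(M)$ in $S^S$, equality holds, with composition as the semigroup operation. Next, fix an $\LSb$-embedding $j:\CUb \hookrightarrow S$ (via \lemref{compact} applied to $\TSb$), let $J := j(\CUb)$, and use \propref{prop1} to obtain a retraction $r: S \to J$ with $r|_J = \mathrm{id}_J$, making $r$ an idempotent in $E(S)$. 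I claim $\CM := E(S) \cdot r$ is a minimal left ideal: given $gr, g'r \in \CM$, to produce $h \in E(S)$ with $hgr = g'r$ it suffices to arrange $(hg)|_J = g'|_J$ as maps $J \to S$; since $J \cong \CUb$ is existentially closed, $g|_J$ is an embedding onto $J' := g(J) \cong \CUb$, a second application of \propref{prop1} yields a retraction $\pi:S \to J'$, and then $h := g'|_J \circ (g|_J)^{-1} \circ \pi$ does the job. This step, extending partial homomorphisms to global ones via retractions onto copies of $\CUb$, is the main technical point, and uses both the universality and the existential closedness of $\CUb$.

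Finally, by definition $E_M = r\CM = rE(S)r$, and I will compute this via the restriction map $\rho: rE(S)r \to \End_{\LSb}(J)$, $rhr \mapsto (rhr)|_J = rh|_J$. A direct check shows $\rho$ is a semigroup homomorphism. It is injective because $rhr(s) = rhr|_J(r(s))$ for all $s \in S$, recovering $rhr$ from its restriction to $J$; and surjective because for $\phi \in \End_{\LSb}(J)$, the element $h := \phi \circ r \in E(S)$ satisfies $\rho(rhr) = \phi$, since $\phi$ maps into $J$ where $r$ is the identity. Because $\CUb$ is the universal e.c. model of the ec-bounded theory $\TSb$, \propref{prop1} gives $\End_{\LSb}(J) = \Aut(J) \cong \Aut(\CUb)$, whence $E_M \cong \Aut(\CUb)$ as required.
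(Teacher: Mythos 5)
Your proof is correct and takes essentially the same approach as the paper: both identify the enveloping semigroup with $\LSb$-endomorphisms of $S$ via \lemref{aut2}, fix a retraction $r$ onto a copy $J$ of $\CUb$ as the idempotent, show $E(S)r$ is a minimal left ideal, and then compute $rE(S)r \cong \Aut(J)$ by restriction, using \propref{prop1} to turn endomorphisms of $J$ into automorphisms. Your explicit global statement $E(S)=\End_{\LSb}(S)$ and the particular map $h = g'|_J \circ (g|_J)^{-1}\circ\pi$ used for minimality are minor cosmetic variants of the paper's $s\circ r$ construction, not a genuinely different route.
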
 
\prf  
 Let $j: \CUb \to {S}$ be an $\LSb$-embedding, $\bJ=j(\CUb)$.   Let $r: {S} \to {\bJ}$ be a retraction.  So $r \circ r = r$.  By \lemref{aut2}(2), for any finite
 $F \subset {S}$, $r|F$ can be approximated in  $S ^{F}$   by   automorphisms of $M$.  Thus $r$ lies in the Ellis semigroup ${ES_M}$, and is idempotent.   If  $b \in {ES_M}$,  then $r \circ b |{\bJ}$ is a homomorphism
 ${\bJ} \to {\bJ}$ hence an isomorphism (\propref{prop1}); let $s$ be the inverse isomorphism; then the homomorphism $(s \circ r): {S} \to {\bJ}$ is again in $ES_M$ by \lemref{aut2}, and $(s\circ r) (br) = r$.  This shows that any element $br$ of $ES_M r$
 generates $ES_M r$ as a left ideal, so $ES_M$ is a minimal left ideal.  
  The Ellis  group $E_M$
 can be taken to be the subsemigroup
 $rEr$, under composition, with identity element $r$. The set ${\bJ}$ is preserved under the elements of $rEr$,
 defining an action of $rEr$ on ${\bJ}$.  Each element of $rEr$ induces an $\LSb$-homomorphism of ${\bJ}$, and so an isomorphism.  Conversely by \lemref{aut2}, any $\LSb$-automorphism of ${\bJ}$ is
 obtained in this way.   We thus have a surjective homomorphism
 $E_M \to \Aut({\bJ})$.  It is injective since if $h \in rEr$ is the identity on ${\bJ}$, then $hr=r$, but $h = hr$
 since $h \in rEr$ and $r^2=r$.  So $E_m \cong \Aut({\bJ}) \cong \Aut(\CUb)$.  
\eprf

\begin{cor} \label{ellis2}  
 Let $M$ be an $\aleph_0$-universal, $\aleph_0$-homogeneous model of $T$.
Then
$E_M$  has cardinality at most $\beth_2(\lambda_T) \leq \beth_3(|L|)$.   
   \end{cor}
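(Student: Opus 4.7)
The plan is to simply assemble the bounds already established. By \corref{ellis}, which applies under the $\aleph_0$-saturation and $\aleph_0$-homogeneity hypotheses (note that $\aleph_0$-universal plus $\aleph_0$-homogeneous delivers the $\aleph_0$-saturation needed in \lemref{aut2} once every finitary type over $\emptyset$ is realized), we have an isomorphism $E_M \cong \Aut(\CUb)$. So bounding $|E_M|$ reduces to bounding $|\bar G| = |\Aut(\CUb)|$.

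Next I would invoke \lemref{size}(2), which gives $|\bar G| \leq \beth_2(\lambda_T)$. This is immediate: by \lemref{size}(1), $|\CUb| \leq 2^{\lambda_T} = \beth_1(\lambda_T)$, and any automorphism of $\CUb$ is a function on its underlying set, so $|\Aut(\CUb)| \leq |\CUb|^{|\CUb|} \leq \beth_2(\lambda_T)$. This establishes the first inequality $|E_M| \leq \beth_2(\lambda_T)$.

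For the second inequality, I would use the trivial estimate $\lambda_T \leq 2^{|L|}$: a complete finitary type over $\emptyset$ is determined by its set of formulas, and there are only $|L|$ formulas (in finitely many variables) up to the cardinality of $L$, so there are at most $2^{|L|}$ such types. Consequently $\beth_2(\lambda_T) \leq \beth_2(2^{|L|}) = \beth_3(|L|)$, completing the proof.

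There is no real obstacle here — the work has been done in \corref{ellis} (the identification of the Ellis group as an automorphism group of a canonical structure) and in \lemref{size} (the cardinality bound on that structure). The only thing to verify is that the hypothesis ``$\aleph_0$-universal, $\aleph_0$-homogeneous'' is enough to run \corref{ellis}; this is because an $\aleph_0$-universal model realizes every finitary type over $\emptyset$, which together with $\aleph_0$-homogeneity is exactly what \lemref{aut2}(2) and the retraction construction in \corref{ellis} use.
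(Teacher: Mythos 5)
Your overall plan -- identify $E_M \cong \Aut(\CUb)$ via \corref{ellis}, then bound $|\Aut(\CUb)|$ via \lemref{size}(2), then use $\lambda_T \leq 2^{|L|}$ to pass to $\beth_3(|L|)$ -- matches the first sentence of the paper's proof, and the last estimate (which the paper leaves implicit) is correct. The observation about the hypothesis ($\aleph_0$-universal plus $\aleph_0$-homogeneous yields the $\aleph_0$-saturation required by \corref{ellis}) is also fine.

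However, there is a gap. The sorts of $\LSb$, and hence of $\CUb$, are by definition restricted so that $\g$ involves at most countably many variables in each sort of $L$, and \lemref{size} is a bound on \emph{that} structure. The Ellis group $E_M$, on the other hand, is the Ellis group of the flow $\Aut(M) \curvearrowright S_x(M)$, where $x$ may be a large tuple of distinguished variables (in the standard setup of \cite{kns}, $x$ enumerates the model). When $x$ is uncountable, you cannot simply equate the $\CUb$ appearing in \corref{ellis} (embedded into $S_x(M)$) with the $\CUb$ whose automorphism group \lemref{size}(2) bounds. The paper's proof explicitly flags this: ``When $x$ consists of countably many variables, this is immediate from \lemref{ellis} and \lemref{size}.'' The nontrivial remaining step, which your proposal omits, is the reduction to the countable case: introducing a second copy $x'$ of $x$, noting that $\CU_{\g''} = \CU_x \times \CU_{x'}$ with the diagonal $\bigwedge$-pp-definable, hence $\Aut(\CU_{\g''})$ projects bijectively to each factor, and concluding that $\Aut(\CU_x)$ projects bijectively onto the projective limit of $\Aut(\CU_u)$ with $u$ ranging over finite subsets of a fixed countable set of variables. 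Without this reduction, your argument only proves the corollary under the unstated assumption that $x$ is countable, which excludes the case that motivates comparison with \cite{kns}.
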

   
 \prf  When $x$ consists of countably many variables, this is immediate from
 \lemref{ellis} and \lemref{size}.    
   Note that if we take another copy 
$x'$ of $x$, and let $\g''$ consist of Boolean combinations of $ \gamma_x \union \gamma_{x'}$, then $\CU_{\g''} = \CU_x \times \CU_{x'}$,
and the diagonal is $\bigwedge$-pp-definable, namely it is the relation of omitting $\phi(x,y) \& \neg \phi(x',y)$ for each $\phi$.  Thus $\Aut(\CU_{\g'}) $ projects bijectively to $\Aut(\CU_x)$
and to $\Aut(\CU_{x'})$.  It follows that even if $x$ is allowed to be a large list of variables, 
$\Aut(\CU_x)$ projects bijectively to the projective limit of $\Aut(\CU_u)$ with $u$ ranging over
finite subsets of some fixed countable set of variables.     So we are reduced to that case. \eprf

 
 
\begin{rem} \corref{ellis2} is in fact valid  for any $\aleph_0$-homogeneous model $M$ ($\aleph_0$-saturated or not); the proof is the same, except that the Ellis group will be
isomorphic to the automorphism group of a universal e.c. model of an appropriately stronger primitive universal theory
than $\TSb$, ruling out   types not realized in $M$.

(Incidentally, computing $\Aut(\CUb)$ for \exref{topdyn} gives an example where the Ellis group for homogeneous 
models can look bigger than for the saturated model.    For the saturated model of $T$,  with infinitely many orbits,  $\CUb$ will be isomorphic to $\CU$ (a diagonal copy in each orbit of $\Zz$.)  
In particular $\Aut(\CUb)= \Aut(\CU)$.   But if we use a homogeneous model with $m$ orbits of $\Zz$, $\CUb$ will be the independent product
of a copy of $\CU$ in each orbit, and $\Aut(\CUb)$ will be the wreath product of $Sym(m)$ with $\Aut(\CU)$. )

\end{rem}

Here is an example of a countable theory whose $|\bG|$, and thus the Ellis group, have cardinality $ \beth_3$;  compare \ref{topdyn}.
 
\begin{example}  \label{topdyn2} 
The theory $T$ will again include a  bipartite graph $R \subset P \times Q$.  On $Q$ there are $\aleph_0$ independent equivalence relations $E_n$ with two classes each; they can
be viewed as giving a map $p$ from $Q$ to a torsor $A$ over the group ${\two}^{\Nn}$ (where $\two=\Zz/2\Zz$).  
   There are also commuting definable maps $s_i: Q \to Q$, satisfying $s_i(s_i(x))=x$;
   so that $s_i$ preserves the classes of $E_j$ for $i \neq j$, and flips the two classes of $E_i$.
Thus $p$ is a homomorphism; $p(s_i(x)) = s_i \cdot p(x)$ (where $s_i$ is identified
with the element of ${\two}^{\Nn}$ having a $1$ just in the $i$'th position.)     $T$ is model complete, with universal theory as described above.  

The sort $Q$ is stable, though not stably embedded.  But using \lemref{qe}, mutatis mutandis, $\CUb$ can be computed autonomously on this sort; 
 this implies that
in the sort $Q$, $\CUb$ has a single element in each class of the intersection $\meet_n E_n$;
i.e. $Q(\CUb)$ is a torsor for ${\two}^{\Nn}$, and $p$ induces a bijection $Q(\CUb) \to A$.
While $Q$ has a unique $1$-type, it has continuum many $2$-types; namely for each 
$g \in  {\two}^{\Nn}$ the type  $q_g(x,y)$ 
asserting that $g \cdot p(x)=p(y)$.  These types restricted to $\CUb$ are the graphs of bijections $Q(\CUb)\to Q(\CUb)$, defining again an action of  ${\two}^{\Nn}$ on $Q(\CUb)$, compatible with the others.   

Now each $a \in P$ defines a subset $R(a)$ of $Q$; we prefer to think of
it as a function from $Q$ to $ {\two}$.  Let $h:  {\two}^{\Nn} \to {\two}$ be a homomorphism
(not necessarily continuous.)  We define an atomic type of $\LS$ in the sort $P$, describing 
a function $f: Q \to \two$ such that on the $2$-type $q_g(x,y)$ we have $f(x)=h(g)+f(y)$.  For each $h$
there are precisely two such functions $f,f'$ with the same maximal atomic type, but with $f'=1-f$.
Both are represented in $\CUb$, and each one is atomically $\LS$-definable over the other.  

Given $h_1,\ldots,h_k \in \Hom({\two}^{\Nn}, {\two})$ linearly independent over the $2$-element field, one sees easily that
$q_{h_1},\ldots, q_{h_k}$ are orthogonal in $\CUb$, i.e. the atomic $k$-type is determined by the $1$-types.  
Choose a $GF(2)$-basis $(h_i)_{i \in I}$ for $\Hom({\two}^{\Nn}, {\two})$. Let $a_i \in \CUb$ represent $q_{h_i}$,
and let $b_i=1-a_i$.  Then for any subset $C \subset I \ $, the function exchanging $a_i,b_i$
for $i \in C$ and fixing $a_i,b_i$ for $i \notin C$ preserves all atomic relations $\R_t$ of $\CUb$,
and thus extends to an automorphism of $\CUb$.  It follows that ${\two}^I$ is a homomorphic
image of $\Aut(\CUb)$, which thus has cardinality $2^{2^{2^{\aleph_0}}}$. 
\end{example}

We conclude the appendix with a more general version of \lemref{aut2}  (see \remref{kpt2}.

Let $M \models T$, $S=S_\g(M)$.  View     $S$ as a  compact Hausdorff space
under the usual logic topology; and as   as an $\LS$-structure. 
For $\si \in \Aut(M)$, let $\si_*$ denote the induced $\LS$-automorphism of  $S$.  
Let $A \subset S$.
The set $S^A$ of functions $A \to S$ will be considered as a compact topological space, 
with the topology of pointwise convergence. 

Let $M_A$ denote the structure $M$ expanded with $\phi$-definitions $(d_px)\phi$ for each $\phi \in L$ and each $p \in A$.
By a qf type of $M_A$ over $\emptyset$, we mean a finitely satisfiable collection of formulas of the form $(d_px)\phi$.
(In (2) below, we really just need finitely many such formulas, along with a set of $L$-formulas.)
The hypothesis on realizing types in (2,3) below is thus true whenever $M_A$ is either saturated, or
a qf-saturated existentially closed model of the universal theory of $M_A$.  

\begin{lem}\label{aut}  
\begin{enumerate}
\item  Let $A$ be a  substructure of $S(M)$.  The $\LS$-homomorphisms $A \to S(M)$ form a closed set 
$\Hom_{\LS}(A,S) \subset S^A$, containing the image of $\Aut(M)$ under $\si \mapsto \si_* |A$.
\item Let $A  \subset S$.  
Assume $M$ is $\aleph_0$-homogeneous, and $M_A$ realizes all qf types over $\emptyset$.  Then the image of $\Aut(M)$
is dense in $\Hom_{\LS}(A, S)$.
\item  Assume in (2) that  $M$ is $\lambda$-homogeneous and $M_A$ realizes all qf types in $\lambda$ variables over
$\emptyset$, where
 $\lambda \geq |A| + |L| +|M_0|$,  
$M_0 \leq M$.  Let $f:  A \to S(M)$  be an   $\LS$-homomorphism.   
Then there exists $\si \in \Aut(M)$ such that for all $p \in A$, $\si(p)|M_0 =  f(p)|M_0$.  
\end{enumerate}
\end{lem}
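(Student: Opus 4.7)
The plan is to find a tuple $\bar c' = (c'_\alpha)_{\alpha < |M_0|}$ in $M$, indexed by an enumeration $(c_\alpha)_{\alpha < |M_0|}$ of $M_0$, satisfying: (i) $\bar c'$ has the same $L$-type over $\emptyset$ as $\bar c$; and (ii) for every $p \in A$, every formula $\phi(x,\bar y)$, and every finite subtuple $\bar y_I$, $\phi(x,\bar c'_I) \in p$ iff $\phi(x,\bar c_I) \in f(p)$. Granting such $\bar c'$, the $\lambda$-homogeneity of $M$ produces $\tau \in \Aut(M)$ with $\tau(c_\alpha) = c'_\alpha$ for all $\alpha$; setting $\si = \tau^{-1}$, the identity $\phi(x,c) \in \si_*(p) \iff \phi(x,\tau(c)) \in p$ together with (ii) yields $\si_*(p)|M_0 = f(p)|M_0$ for every $p \in A$, which is what is required.

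Conditions (i)+(ii) jointly define a partial qf type $\Xi(\bar y)$ of $M_A$ over $\emptyset$, in at most $\lambda$ variables and of cardinality at most $|L| + |A| + |M_0| \leq \lambda$, so by the realization hypothesis it suffices to verify finite satisfiability. A finite fragment of $\Xi$ specifies some $L$-formulas $\theta_j(\bar y)$ true of $\bar c$ together with constraints $(d_{p_k}x)\phi_k(\bar y_{I_k}) \leftrightarrow \delta_k$ ($k = 1,\ldots,r$), where the truth value $\delta_k \in \{0,1\}$ records whether $\phi_k(x,\bar c_{I_k}) \in f(p_k)$. Writing $\phi_k^{\delta_k}$ for $\phi_k$ or $\neg\phi_k$ accordingly, $\theta = \bigwedge_j \theta_j$, and $t = (\phi_1^{\delta_1},\ldots,\phi_r^{\delta_r};\theta)$, the tuple $\bar c$ itself witnesses in $S_\gamma(M)$ that $\R_t$ \emph{fails} at $(f(p_1),\ldots,f(p_r))$.

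Here is the decisive step: since $f$ is an $\LS$-homomorphism and $\R_t$ is an atomic relation of $\LS$, preservation yields $\R_t(p_1,\ldots,p_r) \Rightarrow \R_t(f(p_1),\ldots,f(p_r))$; the contrapositive says that $\R_t$ must also fail at $(p_1,\ldots,p_r)$, and unpacking the definition of $\R_t$ furnishes $\bar b \in M$ with $\theta(\bar b)$ and $\phi_k^{\delta_k}(x,\bar b_{I_k}) \in p_k$ for every $k$; this $\bar b$ realizes the given finite fragment of $\Xi$.

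The only subtle point is to apply the homomorphism in the right direction: the conditions to be satisfied are phrased in terms of the $p_k$'s, but our witness $\bar c$ solves the analogous problem for the $f(p_k)$'s, so the contrapositive of preservation (rather than preservation itself) is exactly what is needed. Once this is identified the remaining steps are bookkeeping.
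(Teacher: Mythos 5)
Your proof is correct and follows essentially the same route as the paper, which disposes of (3) with the remark that it is ``similar to (2),'' applied to all $p\in A$ and neighborhoods given by formulas over $M_0$. You have simply made explicit what that remark compresses: collecting the constraints into a single qf type $\Xi$ of $M_A$ in $\leq\lambda$ variables, reducing to finite satisfiability, and discharging each finite fragment via the contrapositive of preservation of $\R_t$ under the $\LS$-homomorphism $f$ — exactly the mechanism used in the paper's proof of (2).
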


\prf  (1) is clear from the definitions.

(2)  Let $f:  A \to S_\g(M)$  be an   $\LS$-homomorphism.  
%
 Given finitely many types $p_1,\ldots,p_m \in A$, let $q_i = f(p_i) $, and consider   
       any neighborhood  $U_i$ of $q_i$ in $S_\g(M)$.   We have to find $\si \in \Aut(M)$ with $\si_*(p_i) \in U_i$.
We can find $c$ from $M$ and formulas $\phi_i(x,y) \in \g$ such that $U_i$ is defined by $\phi_i(x,c)$.
       Let $r=tp(c)$.  For any $\alpha \in r$, $S(M) \models \neg \R_{\phi_1,\ldots,\phi_m;\alpha}(q_1,\ldots,q_m)$.
 Since $f$ is an $\LS$-homomorphism, $S(M) \models \neg \R_{\phi_1,\ldots,\phi_m;\alpha}(p_1,\ldots,p_m)$.
 Hence for some $c_\alpha$ with $\alpha(c_\alpha)$ we have $\alpha(c_\alpha)  { \wedge } (d_{p_i}x)\phi_i(x,c_\alpha)$ for each $i$.   As a consequence of    
 $\aleph_0$-saturation,   there exists $c'$ with $r(c')$ and  $(d_{p_i}x)\phi_i(x,c')$ for each $i \leq m$.
 Let $\si \in \Aut(M)$ satisfy $\si(c')=c$ (using the $\aleph_0$-homogeneity of $M$.)  Since
 $\phi_i(x,c') \in p_i$, we have $\phi_i(x,c) \in \si_*(p_i)$.  Thus $\si_*(p_i) \in U_i$, as required.
 
 (3) The proof is similar to (2), except that we consider all $p \in A$ and all neighborhoods
 $U$ of $q=f(p)$ defined by some $\phi(x,c)$ with $c$ from $M_0$ (allow $c$ to be a $\lambda$-tuple enumerating $M_0$.) \eprf

\begin{rem} \label{kpt2} On \lemref{aut} (2).  \begin{enumerate}
\item  Assume: for every tuple $c$ from $M$, there exists a formula $\alpha \in tp(c)$ 
such that $\Aut(M)$ is transitive on $\alpha(M)$.  Then the saturation assumption on $M_A$
is not needed in the proof of \lemref{aut}(2). 

\item  Assume every every element of $\CU$ is represented by a definable type in $S(M)$.  Then $M_A=M$, and the hypothesis
of \lemref{aut} (2) is simply that   $M$ is $\aleph_0$-homogeneous and $\aleph_0$-saturated.

\end{enumerate}
\end{rem}

\end{section}

\begin{section}{Universal minimal flow}

\ssec{}
We recall some definitions from topological dynamics.  
For any topological group $G$, a {\em flow} is a compact Hausdorff space $X$ along with a continuous $G$-action on $X$; a morphism of $G$-flows is a continuous $G$-equivariant map.   If $G$ has a dense subset of size $\kappa$ then so does $X$, so $|X| \leq 2^{2^\kappa}$.  
The flow is {\em minimal} if every $G$-orbit is dense.  It is {\em universal minimal} if it admits a  
morphism into any other flow $Y$. 
   All endomorphisms
of a  universal minimal flow $M$ are bijective\footnote{A fact due to Ellis; here is a possibly different proof:    if $f: M \to M$ is an endomorphism, $f(M)$ is a subflow
so $f(M)=M$.  Suppose for contradiction that $f$ is not injective.  Construct an inverse system
$(M_\alpha,f_{\alpha,\beta}: \beta \leq \alpha \leq \lambda)$, where $\lambda=|M|^+$,   each $M_\alpha = M$,
and each $f_{\alpha,\alpha+1} = f$.    
We set $f_{\alpha,\alpha}=Id_M$ and define $f_{\alpha,\beta}$  for $\beta < \alpha$ by   induction on $\alpha$.
At successor stages,    let $f_{\alpha+1,\beta} = f_{\alpha,\beta} \circ f$.  At limit stages $\alpha$,
we must define a map $f_\alpha: M \to \liminv_{\beta<\alpha}M_\beta$.  Such a map exists by
universality of $M$.  Thus $M_\lambda$ can be constructed.  But clearly $|M_\lambda| \geq \lambda > |M|$,
a contradiction.}  It follows that $M$ is unique up to a isomorphism.    The same discussion can be
carried out for {\em pointed minimal flows}, where morphisms, if they exist, are unique; in this case the universal one is easily unique, up to a unique isomorphism.  Any minimal subflow $Y_0$ of the universal pointed minimal flow
is a universal minimal flow (any $G$-map from $F$ to a minimal flow $Y$ must restrict to a map $Y_0 \to Y$.)

Recall the space of ultrafilters $\beta Z = \Hom(2^Z,2)$ on a set $Z$; it is  topologized as a closed subspace of $2^{2^Z}$, and thus compact and Hausdorff.  
For a discrete group $G$, it is easy to see that $(\beta G,1)$ is the universal minimal pointed flow of $G$

\ssec{} \label{B2}
Let $L$ be a countable language, 
   $M$ be a countable atomic structure, prime model of $Th(M)$, 
 $G=\Aut(M)$.   We view $G$ as a topological group, 
by taking $M$ to be discrete and giving $G$ the pointwise convergence topology.
We assume (for simplicity) $Th(M)$ has quantifier elimination,
and let $T$ be the universal theory of $M$.   

Let  $x_m$ be a variable for each $m \in M$, as one does in the definition of `diagrams' in elementary logic.  We have the tautological assignment $\ba: x_m \to m$ for these variables.
Let ${I}$ be the set of finite sets of these variables.   
For any  ${i}  \in {I}$,  let $a_{i}$ be the restriction of $\ba$ to ${i}$, and let $\phi_{i} $ be a formula (in variables $i$) isolating $tp(a_{i})$;
so $V_i = \phi_{i}(M)$ is   the $G$-orbit of $a_i$.  (Note that we treat $a_i$ not as an $|i|$-tuple, i.e. a function
$|i| \to M$, but rather as an $i$-tuple, i.e. a function $i \to M$.)
 
  Let $F_i= \beta V_i$, and let $a_i \in F_i$ denote the principal
ultrafilter on $a_i$.     If $i \subset i'$, we have a natural projection $\pi_{i',i}: F_{i'} \to F_i$.  

 Viewing $I$ as an index set, partially ordered  by inclusion, 
 let $(V,\ba)$ be the inverse limit of all the $(V_{i},a_{i}: {i} \in {I})$, and let 
 let  $(F,\ba)$ be the   inverse limit of the spaces $F_i$.    
 
    Note that $G$ acts on ${I}$ naturally; and if $g({i})={i}'$,  we have a natural bijection $V_{i} \to V_{{i}'}$ (change of variable according to $g$), and hence also $\iota_{i;g}: F_i \to F_{i'}$.        We thus obtain an action of
    $G$ on $V$ and hence on $F$.   
\begin{equation}\lbl{eq4}   g(\iota_{i;g}(a_i) ) = a_{i'}  \end{equation}
    (Here $\iota_{i;g}$ acts on the domain of the tuple $a_i$, within the set of variables, and then $g$ acts on the image of $a_i$ within $M$.)

       By \cite{zucker} Prop. 6.3,    
  $(F,\ba)$ is the universal minimal  pointed  flow of $G$.   A morphism $Y \to F$ being the same as a 
  coherent family of morphisms $Y \to F_i$, we obtain: 

\begin{lem}\label{minflow}  A minimal flow $Y$ of $G=\Aut(M)$ is universal iff there exist 
 continuous   maps  $\alpha_i: Y \to   \beta V_i$, with $ \pi_{i',i} \circ \alpha_i = \alpha_{i'}$,  
 and $\alpha_i (gy)   = \iota_{g \inv (i);g} \alpha_{g \inv (i)} (y)$.   
   \end{lem}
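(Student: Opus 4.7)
The plan is to obtain the lemma as a direct unpacking of the universal property of the inverse limit $F = \lim_{\leftarrow} F_i$, combined with the description (given in the opening paragraphs of this appendix) of universal minimal flows as minimal subflows of the universal pointed minimal flow $F$.

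First I would make the $G$-action on $F$ explicit. Since $G$ acts on the index set $I$ by permutation of variables and each $g$ induces $\iota_{i;g}\colon F_i\to F_{g(i)}$, the induced $G$-action on the inverse limit is given coordinate-by-coordinate by
\[ (g\cdot b)_i \;=\; \iota_{g^{-1}(i);g}\bigl(b_{g^{-1}(i)}\bigr). \]
A continuous map $\alpha\colon Y\to F$ corresponds by the universal property to a family $\alpha_i := \pi_i\circ\alpha\colon Y\to F_i$ satisfying $\pi_{i',i}\circ\alpha_{i'}=\alpha_i$, and writing $\alpha(gy)=g\cdot\alpha(y)$ coordinate-by-coordinate produces exactly
\[ \alpha_i(gy) \;=\; \iota_{g^{-1}(i);g}\bigl(\alpha_{g^{-1}(i)}(y)\bigr). \]
Thus continuous $G$-equivariant maps $Y\to F$ biject with families $(\alpha_i)$ satisfying the two conditions of the lemma.

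It then remains to show that, for a minimal $G$-flow $Y$, such a map exists iff $Y$ is a universal minimal flow. For the forward direction I would choose any minimal subflow $F_0\subseteq F$ (which exists by Zorn's lemma and is itself a universal minimal flow by the opening discussion of this appendix); universality of $Y$ yields a $G$-map $Y\to F_0$, and composing with the inclusion gives the desired map to $F$. For the converse, given $\alpha\colon Y\to F$ with $Y$ minimal, $\alpha(Y)$ is a closed $G$-invariant subset of $F$; minimality of $Y$ forces it to be a minimal subflow of $F$, hence a universal minimal flow. Universality of $\alpha(Y)$ then produces a $G$-map $h\colon \alpha(Y)\to Y$, and $\alpha\circ h$ is a $G$-endomorphism of the universal minimal flow $\alpha(Y)$, hence an automorphism by the Ellis-type rigidity cited in the footnote of the excerpt. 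Consequently $h$ is injective, and since its image is a nonempty closed $G$-invariant subset of the minimal $Y$ it is also surjective, giving a $G$-homeomorphism $\alpha(Y)\cong Y$; thus $Y$ is universal minimal.

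The only genuine bookkeeping is unwinding the $G$-action on the inverse limit so that the twisted compatibility condition of the lemma is recognized as the coordinate expression of ordinary equivariance; once that is in hand, the lemma follows formally from the universal property of the inverse limit together with the already-established characterization of universal minimal flows as minimal subflows of $F$.
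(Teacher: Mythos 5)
Your proof is correct and follows essentially the same route as the paper, which simply cites Zucker's identification of $(F,\ba)$ as the universal minimal pointed flow and notes that a morphism $Y\to F$ is a coherent family of morphisms $Y\to F_i$; you are just supplying the routine details the paper leaves implicit (the coordinate form of equivariance on the inverse limit, and the Ellis-rigidity argument showing that a $G$-map from a minimal $Y$ into $F$ forces $Y$ to be isomorphic to its image, hence universal minimal).
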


\ssec{}
Recall the construction $T^*_V$, that renders each subset of each $V_i$ externally definable:     we add a sort $V_i^*$ and   new 
relation symbols $R_i \subset V_i^* \times V_i$ to obtain a bigger language $L^*_V$, with no new axioms.   


Let $J$ denote $\co(T^*_V)$ restricted to  sorts corresponding to $R_i$-types on $V_i^*$; thus $V_i$ are parameter sorts.   Likewise let $S$ denote the space of types of $T^*$ over $M$ in variable sorts 
 $V_i^*$ (for some $i$.)       

\bigskip

Let $T\ram$ be the universal part of the minimal Ramsey expansion of $T$ (\thmref{expand2}).   

\begin{prop} \label{B5}  Let $M$ be a countable atomic model.  
Then  the space of expansions of $M$
to a model of $T\ram$, is the universal minimal flow of $\Aut(M)$.  
\end{prop}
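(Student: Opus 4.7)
The plan is to verify that $Y := \Hom(J, S(M))$, which by $(\ref{exp})$ applied to $T^*_V$ is exactly the space of expansions of $M$ to models of $T\ram$ (here $J = \co(T^*_V)$), is the universal minimal $G$-flow for $G = \Aut(M)$. Using \lemref{minflow} and the description of the universal pointed minimal flow as $F = \liminv_i \beta V_i$, this reduces to three tasks: (A) $Y$ is a compact Hausdorff $G$-flow; (B) $Y$ is minimal; and (C) a coherent equivariant family of maps $\alpha_i \colon Y \to \beta V_i$ satisfying the conditions of \lemref{minflow} exists.

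For (A), I would embed $Y$ as a closed subset of $\prod_{p \in J} S_{\mathrm{sort}(p)}(M)$ with the product topology: preservation of each relation $\R_t$ of $\LS\ram$ under a candidate homomorphism is a closed condition, and the relevant type spaces $S_\gamma(M)$ are compact Hausdorff because $\gamma\ram$ is closed under Boolean combinations. The action $g \cdot h := g_* \circ h$ on $Y$ is continuous since $G$ acts continuously on each $S_\gamma(M)$.

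For (B), I would fix a base point $h_0 \in Y$, which exists by \lemref{compact}. Since $J$ is existentially closed, $h_0$ is an embedding by \remref{2.2}, so any $h \in Y$ factors uniquely as $h = h' \circ h_0$ with $h' \colon h_0(J) \to S(M)$ an $\LS\ram$-homomorphism. Then I would apply \lemref{aut}(2) to $A := h_0(J)$: because $M$ is a countable atomic prime model, every tuple of $M$ has an isolated type on which $\Aut(M)$ acts transitively, so by \remref{kpt2}(1) the saturation hypothesis on $M_A$ is not needed. Hence $\{g_*|_{h_0(J)} : g \in G\}$ is dense in $\Hom_{\LS}(h_0(J), S(M))$, and pre-composing with $h_0$ yields density of $G \cdot h_0$ in $Y$.

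Step (C) is the main obstacle. For each finite tuple $i$ with isolating formula $\phi_i$, I would identify inside $J$ a canonical maximal pattern type $p_i$ in a suitable sort with several distinguished variables in the sort $V^*$, whose induced $R$-trace is a tuple of distinct singletons in $V$ realizing $\phi_i$, together with all further $\LS\ram$-information encoded in the $G$-orbit of $a_i$. The map $\alpha_i(h) := h(p_i)$ would take values in $\beta V_i$ after identifying the appropriate subspace of $S(M)$ with the Stone--\v Cech compactification of $V_i = \phi_i(M)$; this identification uses the fact that $T^*_V$ imposes no axioms on $R$, so type spaces in $V^*$-sorts are Stone spaces of Boolean data indexed by $V(M)$. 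Coherence $\pi_{i', i} \circ \alpha_{i'} = \alpha_i$ should follow from restriction compatibility among the $\{p_i\}$ under passage between variable tuples in $J$; equivariance $\alpha_i(gh) = \iota_{g^{-1}(i); g} \circ \alpha_{g^{-1}(i)}(h)$ should follow from the canonicity of $p_i$ (depending only on the $G$-invariant data $\phi_i = \phi_{g^{-1}(i)}$) together with the naturality of the $G$-actions on $J$ and on $S(M)$. The hard part will be the precise definition of the canonical $p_i$ and verification that the induced maps genuinely take values in $\beta V_i$ rather than in the ambient type space; with this in hand, \lemref{minflow} combined with (A) and (B) yields that $Y$ is universal minimal.
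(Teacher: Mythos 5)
Your general plan --- identify the expansion space with $\Hom(J,S)$, establish minimality, and then build coherent equivariant maps $\alpha_i\colon \Hom(J,S)\to\beta V_i$ to invoke \lemref{minflow} --- is the same outline the paper follows, and steps (A) and (B) are essentially right. For (B) the paper cites \lemref{aut2}(2), while you route through \lemref{aut}(2) plus \remref{kpt2}(1); both are valid here, since $M$ atomic prime means every complete type realized in $M$ in the parameter sorts is isolated, so the two lemmas coincide in effect and the saturation clause is not needed. (One small point: you should note that \emph{every} $h\in\Hom(J,S)$ is an embedding, not just $h_0$, so that the density argument applies to every orbit, which is what minimality actually requires; this follows from \remref{2.2}(1) exactly as for $h_0$.)

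The gap is in (C), and it is a real one, not just a verification left to the reader. Your proposal sets $\alpha_i(h):=h(p_i)$ for some distinguished $p_i\in J$. But $p_i$ lives in a sort of $J$ whose distinguished variable is in $V_i^*$ with parameters in $V_i$, so $h(p_i)$ is a single element of $S_{\{R_i\}}(M)$, i.e.\ a single subset of $V_i(M)$ (the type decides $R_i(x,b)$ for each $b\in V_i(M)$). An ultrafilter on $V_i(M)$ is a different object --- an element of the Stone space of $2^{V_i(M)}$, not of $2^{V_i(M)}$ itself --- and there is no natural embedding of any useful subspace of $2^{V_i(M)}$ into $\beta V_i$. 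So ``identifying the appropriate subspace of $S(M)$ with the Stone--\v{C}ech compactification of $V_i$'' is not something one can do; the codomains simply do not match, and no choice of $p_i$ fixes this. The information encoding an ultrafilter is not carried by a single value $h(p_i)$ but by the whole map $h$, and extracting it requires a genuine duality argument. What the paper does is fix a retraction $\rho\colon S\to J$ (which exists by \propref{prop1}) and consider the $\LS$-endomorphism $h\circ\rho$ of $S$. \propref{duality1}, applied to $T^*_V$ with $A=B=M$ and $\gamma=\{R_i\}$, converts this endomorphism into a global $\gamma$-type $r_h$ extending $\mathrm{tp}(\bar a/\emptyset)$ and finitely satisfiable in $M$; restricting $r_h$ to the $V_i$-coordinates and reading off which sets $s(d/M)=\{c\in V_i(M):dR_ic\}$ belong to it produces an ultrafilter $\alpha_i(h)$ on $V_i(M)$, with the finite-satisfiability hypothesis doing the work of showing it is indeed an ultrafilter. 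Continuity and equivariance are then checked directly. This duality step --- turning a homomorphism out of $J$ into an endomorphism of $S$ and then into an $M$-finitely-satisfiable type --- is the essential content of (C) and is absent from your outline.
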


\prf   Let $J,S$ be as above.   Recall (\propref{expandd}, \eqref{exp}) that 
 the space of expansions  of $M$ to a model of $T\ram$ is isomorphic to 
$\Hom(J,S)$.   We thus  have to show that $\Hom(J,S)$ is the universal minimal flow of $G=\Aut(M)$.  

Minimality of   $\Hom(J,S)$ as a $G$-flow, i.e. the fact that every orbit is dense,   follows from \lemref{aut2} (2).

By \lemref{minflow}, it suffices to find  continuous maps $\alpha_i: \Hom(J,S) \to   \beta V_i$, functorial in $i$
and compatible with the $G$-action.
 
We will use \lemref{duality1}, for the theory $T^*_{V}$, specifically for $\g=\{R_i\}$ the relations connecting
$V_i$ with $V_i^*$; with $A=B=M$ there, and $N \geq M$ a large model of $T^*$.    But first, fix a homomorphism $\rho: S  \to J= \co(T^*_V)$.  
Then any homomorphism $h: J \to S$ yields an endomorphism $h \circ \rho$ of $S$.  By \lemref{duality1}
we obtain an extension $r_h$ of $tp(\ba)$ to a global $\g$-type, finitely satisfiable in $M$; in particular,
we can restrict attention to the coordinates $i$, obtaining a global $R_i$-type in $V_i$, finitely satisfiable in $M$.
Such a type corresponds precisely to an ultrafilter on $V_i$.  Indeed, for $d \in V_i^*(N)$, let   $s(d/M) :=s(tp(d/M)):= \{c \in V_i(M):  aR_i c\}$; this subset of $V_i(M)$ has the same information as $qftp(d/V_i(M))$.  
Let $\alpha_i(h) = \{ s(a/M):   a R_i y_i \in r_h  \}$.     Then $\alpha_i(h)$  is an ultrafilter on $V_i(M)$;  for instance  if 
$s(a/M) \subset s(a'/M)$, then   $R_i(a,y_i) \& \neg R_i(a',y_i)$ is not satisfied by any element of $M$,
so it is not in $r_h$; hence   if $s(a/M) \in \alpha_i(h)$ then $s(a'/M)  \in \alpha_i(h)$, so that $\alpha_i(h)$ is upwards closed.   A similar argument shows that  $\alpha_i(h)$ is closed under intersections, contains each set or its complement, and that for $i \subset i'$, $\alpha_{i'}(h)$
projects to $\alpha_i(h)$.   

Continuity of $\alpha_i$:  Fix $d \in V_i^*$  and let $j=\rho(tp(d/M))$.
Then $s(d/M) \in \alpha_i(h)$ iff $d R_i y_i \in r_h$ iff $x_i R_i a_i \in h(j)$; the set of $h$ with this property is open by definition of the pointwise convergence topology on $\Hom(J,S(M))$.

  It remains to compare the $G$-actions .   Let  $g \in G=\Aut(M)$, $h \in \Hom(J,S)$, $g\inv h := g \inv \circ h$.
  Fix $i$ and let $i' = g \inv(i)$.   Write $\iota = \iota_{i';g}$.     Let   $w \subset V_i(M)$; we will show that  
\[w \in  \alpha_i ( h)  \iff  w \in \iota  \alpha_{i'} (g \inv h) \]
Let $p$ be a type in $V_i^*$ over $M$ with $s(p)=w$, and $p'(x') $ a type over $M$ of elements of  $V_{i'}^*$,  differing from $p$ only in the change of variable $i' \mapsto i$ determined by $g$, so that $s(p')=\iota \inv(w)$.     Since this change of variable is expressible via an $\R_t$-relation
between $p$ and $p'$, 
it remains true of $h \rho(p), h \rho(p')$.   In particular,
\[  w \in \alpha_i(h) \iff a_i \in s(h \rho(p)) \iff \iota \inv(a_i) \in s(h \rho(p')) \]  
By \eqref{eq4} of \secref{B2},  this is iff $g(a_{i'}) \in s(h \rho(p'))$ iff $a_{i'} \in s(g \inv h \rho(p'))$ iff 
  $\iota \inv(w) \in  \alpha_{i'} (g \inv h)$.
 
\eprf 

\begin{rem} \begin{enumerate}
\item   Let $M$ be any countable structure.   Then \propref{B5}   gives a description of the universal minimal flow of $Aut(M)$ in terms of expansions to $T^{ram}$,
where $T$ is the theory of $M$ expanded by a relation for each $Aut(M)$-orbit on $M^n$.   Alternativey one can use the infinitary pattern space of Appendix A.  
\item  
   In  the case of continuous logic,  $V^*$ should be replaced by the ind-sort of uniformly continuous maps $V(M) \to \Rr$, and $R_i$ by evaluation.     Presumably, a similar comparison to the Weil-Samuel compactification of $G$ should work, but I have not checked any of the details.  
 \end{enumerate}\end{rem}

\begin{rem} \label{bmt/z} The results of \cite{zucker}, \cite{bmt} (for the discrete logic case) read in this light as a dichotomy:  $J$ is sortwise finite or uncountable.

Indeed by \exref{completeba},
$J$ carries a complete Boolean algebra structure on each sort $V_i^*$.    Boolean algebras are always either
 finite, or  admit an infinite set $I$ of pairwise disjoint elements.  A complete Boolean algebra of the latter kind
 must have cardinality at least continuum, since the sums of two distinct subsets of $I$ are never equal. 
   
   If $J$ is sortwise  finite, then $T\ram$ is a sortwise finite expansion of $T$,  hence it has finitely many  qf types of each sort extending any given type of $T$.
  In this case  the model completion $\hat{T}$ of  $T\ram$ is
 $\aleph_0$-categorical if $T$ is, and in any case has dense isolated types, as $T$ does; and the space of expansion of $M$ to a model of $T\ram$ has a comeager $G_\delta$ orbit, namely  the expansions to an atomic model of $\hat{T}$.   
 
 On the other hand if $J$ is uncountable, then $\Hom(J,S(M))$ cannot be metrizable, indeed cannot admit
 a countable basis  for the topology. 
 For suppose is countable basis; an element $b \in B$ can be taken to be of the form $\{ h \in \Hom(J,S): (h(j_1),\ldots,h(j_n)) \in D \}$, with $j_i \in J$,
 and $D$ an open set in $S:=S(M)$.   
 Let $J_0$ be the countable set of all $j_i$ occurring in $B$.  Now if $h_1\neq h_2 \in \Hom(J,S)$, there exists $b_i $
 with $h_1 \in b_i$ and $h_2 \notin b_i$,  and it follows that $h_1(j_i) \neq h_2(j_i)$.   Thus each 
 $h$ is determined by $h(j), j \in J_0$.  But now by \lemref{beth}, each $h$ is definable from finitely many $h(j)$, 
 so $|J| \leq \aleph_0$. 
 
%

\end{rem}

\begin{rem}   \label{Gactions2}   Let us give another proof of \exref{Gactions} in light of \propref{B5}.  
  $\Gamma$ is an infinite group, 
$T$   the theory of free $\G$-actions.   We  give a proof of 
Let $M$ be the prime model  of $T$ (a single $\Gamma$ orbit.)
Then $Aut(M)$  is another copy of $\Gamma$ (acting on the right), with the discrete topology.  
Let $J=\CU\ram(T)$; it is a Boolean algebra with $\G$-action, and write $2$ for the $2$-element Boolean algebra.
 We thus have
two descriptions of the universal minimal flow $U$ of $\Gamma$:   by \propref{B5}, $U$ is the 
 space $Hom(J,S(M^*))$ of expansions of $M$ to a model of $T\ram$, so we can write:
\[ U = Hom_{\LS\ram} (J,S(M^*)) = Hom_{Bool,\G} (J, ^M 2) \]
while by \exref{Gactions}, $U$ is the Stone dual to $J$, i.e. 
\[ U =  Hom_{Bool}(J,2) \]

These are compatible by a   duality analogous to Frobenius reciprocity.      A $\G$-equivariant Boolean homomorphism from $J$ to the algebra of functions from $M$ into $2$ can be viewed as a $\G$-invariant 
function $J \times M \to 2$, where $\G$ acts trivially on $2$.  Thus
\[ Hom_{Bool,\G} (J, ^M 2) = Hom_{Bool,\G}(J \times M, 2 ) \]
By picking a point $m_0 \in M$ and evaluating there, we obtain a map $Hom_{Bool,\G}(J \times M, 2 ) \to Hom_{Bool}(J,2)$ which can easily be seen to be a bijection and a homeomorphism:
\[ Hom_{Bool,\G}(J \times M, 2 ) = Hom_{Bool}(J,2) \]
Compatibility with the right $\G$-action is also easy to check.

\end{rem}

 \end{section}

\begin{section}{ Hausdorff quotients}
 \label{topg} We include here some elementary statements on Hausdorff quotients of topological spaces.   Any topological space $X$ has a 
universal Hausdorff quotient, namely $X/E$ for $E$ the smallest closed equivalence relation on $X$.
In the homogeneous case, one can describe $E$ more effectively.

In this subsection, all quotients are given the quotient topology,
 i.e. the open sets are those whose pullback   is open.     

 Let $(X,\ft)$ be a topological space, $G$ a group acting on $X$ by homeomorphisms.  
  
For $W \subset X$ and $x \in X$, let 
$ W x \inv := \{g \in G: gx \in W \} $.   Also for $U \subset X$,  write 
$W U \inv : = \union_{u \in U} W u \inv  = \{g \in G: gU \meet W \neq \emptyset\}$.

Define the infinitesimal elements of $G$ (acting on $X$) to be 
\[  {\fg_X}= \meet_{\emptyset \neq U \in \ft} U U \inv     =\{g \in G:  \forall U \in \ft \ (  U \neq \emptyset \to  gU \meet U \neq \emptyset )\}  \]
 ${\fg_X}$ is a clearly a subgroup of $G$, invariant under automorphisms of     $(G,X,\ft)$
 and in particular normal. 

We can also write:
\begin{equation} \label{2}  {\fg_X}=\{g \in G:  (\forall U \in \ft) \ g \cdot  U \subseteq \cl(U)\}= \meet_{U \in \ft, u \in U  } \cl(U) u \inv
\end{equation}
Indeed if $gU \subset \cl(U)$, since $U$ is dense in $\cl(U)$, $U$ is not disjoint from
the open set $gU$.  Hence if $gU \subset \cl(U)$ for all $U$, then  $g \in {\fg_X}$.  
Conversely assume $g \in {\fg_X}$.   
Let $V$ be the open set $U \m g \inv \cl(U)$.  Then $gV \meet V = \emptyset$, so $V = \emptyset$, i.e. 
$gU \subseteq \cl(U)$. 

\begin{equation} \label{3}   {\fg_X}= \{g \in G:  (\forall U \in \ft) \ g \cdot  \cl(U) = \cl(U)\} \end{equation}
Let $g \in {\fg_X}$.  By \qref{2}, $gU \subseteq \cl(U)$.
 By continuity of $g$ we have $g \, \cl(U) \subseteq \cl(U)$.  Applying this to $g \inv$,
we have $g \inv \, \cl(U) \subseteq \cl(U)$, equivalently $\cl(U) \subseteq g \cl(U)$.
Thus $g \cl(U) = \cl(U)$.   

For a final characterization of $\fg_X$, recall the Boolean algebra $ro(X)$ of regular open subsets of $X$.  
$U \subset X$ is {\em regular open} if $U=int(cl(U))$.    Complementation in this Boolean algebra takes the form
 $U \mapsto int(cl(X \m U))$.
If $g \in \fg_X$
then $g(cl(U)) = cl(U)$ so if $U$ is regular open, then  $g(U) = g(int(cl(U))) = int (g(cl(U)))= int( cl(U) )= U$.
Thus $\fg_X$ fixes $r.o.(X)$ pointwise.   Conversely if $g $ fixes acts trivially on $ro(X)$,  then for any open $U$ we have
$g(U) \subset g(int(cl(U))) = int(cl(U)) $ or $g(U) \subset cl(U)$, and so $g \in \fg_X$.  
  
 \begin{lem} \label{topglem1}   Let $G \times X \to X$ be a group action, and assume $G,X$ are endowed with a topology so that 
 the action $G \times X \to X$ is  continuous in each variable.   Then ${\fg_X}$ is closed, and
 $G/{\fg_X}$ (with the quotient topology) is Hausdorff.   
\end{lem}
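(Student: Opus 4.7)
The plan is to establish closedness and normality of $\fg_X$ by direct computation, and then to construct separating neighborhoods in $G/\fg_X$ by hand. For closedness, the formula \qref{2} presents $\fg_X$ as $\bigcap_{U \in \ft,\, u \in U} \cl(U) u\inv$; separate continuity of the action makes each evaluation map $g \mapsto gu$ continuous, so $\cl(U)u\inv$ is the preimage of the closed set $\cl(U)$, hence closed, and $\fg_X$ is an intersection of closed sets. For normality, if $g \in \fg_X$ and $h \in G$, then $h\inv U$ is open (as $h$ is a homeomorphism), so $g(h\inv U) \subseteq \cl(h\inv U)$ by \qref{2}; applying $h$ gives $hgh\inv(U) \subseteq h \cl(h\inv U) = \cl(U)$, whence $hgh\inv \in \fg_X$.

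For Hausdorffness, fix $g_1 \fg_X \neq g_2 \fg_X$ and put $g := g_1\inv g_2 \notin \fg_X$. By \qref{2}, there exist an open $U$ and $u \in U$ with $gu \notin \cl(U)$. The crucial move is to replace $U$ by the regular open set $U' := (\cl U)^{\circ} \supseteq U$: the witness persists ($u \in U'$ and $gu \notin \cl U' = \cl U$), and now by \qref{3} every $f \in \fg_X$ preserves $\cl U'$ and hence its interior $U' = (\cl U')^{\circ}$. Set $V := g_1 U'$, which is open, regular open (as a homeomorphic image of the regular open $U'$), contains $g_1 u$, and satisfies $g_2 u = g_1 (gu) \notin g_1 \cl U' = \cl V$. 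Define
\[ W_1 := \{h \in G : hu \in V\}, \qquad W_2 := \{h \in G : hu \in X \setminus \cl V\}; \]
by separate continuity both sets are open, they contain $g_1$ and $g_2$ respectively, and they are disjoint since $V \cap (X \setminus \cl V) = \emptyset$.

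It remains to verify that each $W_i$ is a union of $\fg_X$-cosets, so that they descend to disjoint open neighborhoods in $G/\fg_X$. Because $V$ is regular open and \qref{3} gives $f\cl V = \cl V$ for $f \in \fg_X$, one has $fV = f(\cl V)^{\circ} = (f \cl V)^{\circ} = (\cl V)^{\circ} = V$, and symmetrically $f(X \setminus \cl V) = X \setminus \cl V$, so $fW_i = W_i$; writing $hf = f' h$ with $f' \in \fg_X$ (by normality) upgrades this to $W_i \fg_X = W_i$. Hence $\pi(W_1)$ and $\pi(W_2)$ are disjoint open neighborhoods of $g_1 \fg_X$ and $g_2 \fg_X$ in $G/\fg_X$. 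The main obstacle is precisely this last invariance step: with only separate continuity available, one cannot appeal to the standard result that closed normal subgroups of topological groups have Hausdorff quotients, and the naive candidates $\{h : hu \in U\}$, $\{h : hu \in X \setminus \cl U\}$ need not be $\fg_X$-invariant. Regularizing $U$ is what repairs this.
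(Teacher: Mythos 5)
Your proof is correct and follows essentially the same route as the paper's: the regularized set $V = g_1 U' = g_1(\cl U)^{\circ}$ is literally the paper's $\operatorname{int}(\cl(aU))$ (with $a = g_1$), and both arguments rest on \qref{3} to see that $\fg_X$ stabilizes this regular open set and the complement of its closure, then pull back along $h \mapsto hu$. The only differences are expository — you prove normality of $\fg_X$ (which the paper merely asserts) and spell out the verification that $W_1,W_2$ are $\fg_X$-saturated, a step the paper leaves implicit.
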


\prf 

To prove  $G/{\fg_X}$ is Hausdorff,  
it suffices to show that if $g_1,g_2 \in G$ and $g:=g_2 \inv g_1     \notin {\fg_X}$, then $g_1,g_2$ are separated by disjoint open 
${\fg_X}$-invariant sets.  Since $g \notin {\fg_X}$, by \qref{2}, for some $u \in X$ and $u \in U \in \ft$ we have
$g \notin cl(U) u \inv$, i.e. $gu \notin cl(U)$.  By  \qref{3}, ${\fg_X}$ stabilizes $cl(U)$, hence also $cl(X \m cl(U))$ and the complement $int(cl(U))$.  Thus $X \m cl(U)$ and $int(cl(U))$ are disjoint ${\fg_X}$-invariant open subsets of $X$;
we have $  u \in int(cl(U))$ and $g  u \in X \m cl(U)$.   So $g_2   u \in int(cl(g_2U))$ and $g_1 u = g_2 gu \in X \m cl(g_2 u)$.
Since $h:G \to X$ defined by $h(x) =   x \cdot u$ is continuous, and $\fg$ is normal, $h \inv (int(cl(g_2U))$ and $X \m int(cl(g_2U))$ 
are disjoint  ${\fg_X}$-invariant open subsets of $G$, separating $g_1$ from $g_2$.

%
\eprf

\begin{lem} \label{topglem2}  Assume $g \mapsto gx_0$ is  a  closed, surjective, continuous map, for any $x_0 \in X$.     Assume also that $X$ is T1 and $G$ is compact as a topological space (or just that the stabilizer of a point of $X$ is compact.)
Then \begin{enumerate}
\item  If $N$ is any  normal subgroup of $G$ with $G/N$ Hausdorff, then $X/N$ is
Hausdorff.  
\item     $X/{\fg_X}$ is the universal Hausdorff quotient space $X_h$ of $X$:  any continuous map from $X$ to a Hausdorff space factors (uniquely) through $X/{\fg_X}$.    
\end{enumerate}
 \end{lem}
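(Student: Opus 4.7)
My plan begins with two preliminary observations about the map $\pi = \pi_{x_0}: G \to X$, $g \mapsto gx_0$. First, since $\pi$ is a closed continuous surjection, it is a quotient map, so $X \cong G/G_{x_0}$ as topological spaces. Second, $\pi$ is in fact open: for $U \subset G$ open, $\pi^{-1}(\pi(U)) = UG_{x_0} = \bigcup_{h \in G_{x_0}} Uh$ is a union of translates of $U$, hence open. Once (1) is established, part (2) follows immediately: $\fg_X$ is a normal closed subgroup by \lemref{topglem1}, and $G/\fg_X$ is Hausdorff, so (1) gives that $X/\fg_X$ is Hausdorff. Uniqueness of factorization through the surjective quotient $X \to X/\fg_X$ is automatic, so only the existence of the factorization remains.

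For the universal property in (2), I will argue directly that any continuous $f: X \to Y$ with $Y$ Hausdorff is $\fg_X$-invariant. Suppose $g \in \fg_X$ and $f(gx) \neq f(x)$ for some $x$; separate $f(x), f(gx)$ by disjoint open $V_1, V_2 \subset Y$ and set $U_i = f^{-1}(V_i)$. Then $gx \in gU_1 \cap U_2$, yet by \qref{2} we have $gU_1 \subseteq \cl(U_1)$, and $\cl(U_1) \cap U_2 = \emptyset$ since $U_2$ is open and disjoint from $U_1$. This contradiction shows $f(gx) = f(x)$, and we get the induced continuous $\bar f: X/\fg_X \to Y$.

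The substantive step is part (1), which I will reduce to showing that $E_N := \{(x,y) \in X \times X : y \in Nx\}$ is closed. Lifting to $G \times G$ via $\pi \times \pi$ and using the normality of $N$, the preimage of $E_N$ is
\[
\widetilde{E}_N = \{(g_1,g_2) \in G \times G : g_1^{-1}g_2 \in N G_{x_0}\}.
\]
I will show $N G_{x_0}$ is closed in $G$: the map $\alpha: G \to G/N$ is continuous, $\alpha(G_{x_0})$ is compact (image of the compact $G_{x_0}$), hence closed in the Hausdorff $G/N$, and $NG_{x_0} = \alpha^{-1}(\alpha(G_{x_0}))$ is closed. Given $(x_1,x_2) \notin E_N$, lift to $(g_1,g_2)$ with $g_1^{-1}g_2 \notin NG_{x_0}$, pick an open $W \ni g_1^{-1}g_2$ disjoint from $NG_{x_0}$, then use continuity of $(h_1,h_2) \mapsto h_1^{-1}h_2$ to find open $U_i \ni g_i$ with $U_1^{-1}U_2 \subseteq W$. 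Applying openness of $\pi$, the sets $\pi(U_1), \pi(U_2)$ are open neighborhoods of $x_1,x_2$, and a short calculation (using normality of $N$) shows $\pi(U_1) \times \pi(U_2)$ is disjoint from $E_N$. The main obstacle I expect is keeping the compactness/closure arguments honest in the absence of Hausdorffness on $G$ and $X$: it would be tempting to invoke "compact implies closed," which fails here, so each closedness claim must be verified directly, which is why the argument routes through the Hausdorffness of $G/N$ and the openness of $\pi$ rather than any compact-image shortcut.
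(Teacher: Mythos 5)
Part (2) matches the paper's argument (the paper is more terse, simply saying $f(gx),f(x)$ cannot be separated by disjoint open sets, which is exactly your $gU_1\subseteq\cl(U_1)$ contradiction). For part (1) you take a genuinely different route. The paper identifies $X$ topologically with the coset space $G/H$ ($H$ the stabilizer, compact since closed in compact $G$), then identifies $X/N$ with $(G/N)/\bar H$ and argues that $\bar H$, being a continuous image of a compact set, is compact and hence closed in the Hausdorff $G/N$, so the coset space is Hausdorff. You instead verify directly that the orbit relation $E_N$ is closed in $X\times X$, the crux being that $NG_{x_0}=\alpha^{-1}(\alpha(G_{x_0}))$ is closed; you then push down via openness of $\pi$. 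Both proofs turn on the same key point — the stabilizer has compact, hence closed, image in $G/N$ — and both are legitimate; yours is arguably more hands-on, while the paper's bookkeeping via coset spaces makes the identification $X/N\cong (G/N)/\bar H$ do the topological work.

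There is, however, a step where you silently assume more than the stated hypotheses give. When you "use continuity of $(h_1,h_2)\mapsto h_1^{-1}h_2$" to produce $U_1,U_2$ with $U_1^{-1}U_2\subseteq W$, you are invoking \emph{joint} continuity of the group operations on $G$; but the hypotheses, following \lemref{topglem1}, only grant separate continuity, and the paper is elsewhere explicit that $G$ is a priori only a semitopological group (see \secref{pptop}, where Ellis' theorem is cited exactly to bridge this gap). The paper's route sidesteps the issue: the joint-continuity step happens inside $G/N$, which is compact Hausdorff, so Ellis' theorem upgrades it to a genuine topological group where $(a,b)\mapsto a^{-1}b$ is jointly continuous. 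You can repair your version by doing the shrinking in $G/N$ — pick a neighborhood $W'$ of $\alpha(g_1^{-1}g_2)$ disjoint from the closed $\alpha(G_{x_0})$, use joint continuity in the topological group $G/N$ to get $\bar U_1,\bar U_2$, and pull back along the continuous $\alpha:G\to G/N$. You should also record explicitly that $X\to X/N$ is open (the $N$-saturation of an open set is a union of translates), which is what lets closedness of $E_N$ yield Hausdorffness. With these small adjustments your argument is complete.
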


\prf  
 
(1)   
 Fix $x_0 \in X$, and let $H=\{g \in G: gx_0=x_0\}$ be the stabilizer.   The topology on $X$
 is just the quotient topology on $G/H$, since the map $g \mapsto gx_0$ is continuous and closed.
 Since  $X$ is T1, $H$ is closed, hence compact.  So the image $\bar{H}$ of $H$ in $G/N$ is compact, hence 
 (as $G/N$ is Hausdorff) closed; and hence $(G/N)/ \bar{H}=
 G/ (HN)$ is Hausdorff.  But this is the same space as $(G/H)/N= X/N$.
 
 (2) 
 Let $f: X \to Y$ be a continuous map into a Hausdorff space.  Then for $g \in {\fg_X}$ we have
 $f(gx)=f(x)$, since $f(gx), f(x)$ cannot be separated by disjoint open sets.  Thus 
 $f$ factors through $f': X /{\fg_X} \to Y$, which is continuous by definition of the quotient topology.
 \eprf
 
 \begin{rem} \label{topglem2r}
 It follows from \lemref{topglem2}
  (1) and (2)  
  that if $N \leq {\fg_X}$ and $G/N$ is Hausdorff, then 
 the same equivalence relation  is induced on $X$ by $\fg_X$ and by $N$. 
 \end{rem}

\begin{lem}\label{second} Let $(X,\ft_1)$ be a   Hausdorff space.  Let $\ft_2$ be a topology on $X^2$ containing the product topology $\ft_1^2$, with $(X^2,\ft_2)$ compact.   Let $\Delta=\{(x,x): x \in X\}$
be the diagonal, and assume  $\Delta= \meet_{i \in I} G_i$ with $G_i$ open, $\lambda= |I|+\aleph_0$.
Then $\ft_1^2=\ft_2$, and $\ft_1$ admits a  basis of cardinality $\lambda$ (and hence is metrizable, if $\lambda=\aleph_0$).  
   \end{lem}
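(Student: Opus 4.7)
The plan has two phases. First I would show $\ft_1^2 = \ft_2$ by the classical Alexandrov-style argument: the identity map $(X^2,\ft_2) \to (X^2,\ft_1^2)$ is a continuous bijection (continuous because $\ft_2 \supseteq \ft_1^2$) from a compact space onto a Hausdorff space (the product of two Hausdorff spaces is Hausdorff), hence automatically closed, hence a homeomorphism. In particular $(X,\ft_1)$ is compact Hausdorff, and each $G_i$ is open in $\ft_1^2$.

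Second I would build a basis $\mathcal{U}$ of $\ft_1$ of size at most $\lambda$. As a preparatory step, close the family $\{G_i\}_{i \in I}$ under finite intersections (preserving cardinality $\lambda$) so that it becomes directed downward by inclusion; then, using normality of the compact Hausdorff space $(X^2,\ft_1^2)$ applied to the disjoint closed sets $\Delta$ and $X^2 \setminus G_i$, find for each $i$ an index $i'$ with $\Delta \subseteq G_{i'} \subseteq \overline{G_{i'}} \subseteq G_i$. Next, for each $i$ and each $x \in X$, the point $(x,x)$ lies in the $\ft_1^2$-open $G_i$, so a basic rectangle $V \times W \subseteq G_i$ with $x \in V \cap W$ exists, and $U := V \cap W$ satisfies $U \times U \subseteq G_i$. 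Compactness of $X$ yields a finite subcover $\mathcal{U}_i$ of $X$ by such sets. Set $\mathcal{U} := \bigcup_i \mathcal{U}_i$; it has cardinality at most $\lambda$.

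To show $\mathcal{U}$ is a basis, fix $x \in V \in \ft_1$ and consider the stars $st(x,\mathcal{U}_i) := \bigcup\{U \in \mathcal{U}_i : x \in U\}$. It suffices to produce some $i$ with $st(x,\mathcal{U}_i) \subseteq V$. Suppose not: for each $i$ pick $y_i \in st(x,\mathcal{U}_i) \setminus V$, witnessed by some $U_i \in \mathcal{U}_i$ containing both $x$ and $y_i$, so that $(x,y_i) \in U_i \times U_i \subseteq G_i$. By compactness of $X \setminus V$, the closed sets $\overline{\{y_j : j \text{ refines } i\}}$ (for varying $i$) have the finite intersection property in $X \setminus V$, so they share a point $y \in X \setminus V$. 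Now for any $i_0$, choose $i_0'$ with $\overline{G_{i_0'}} \subseteq G_{i_0}$; for every $j$ refining $i_0'$ we have $(x,y_j) \in G_j \subseteq G_{i_0'}$, and since the closure of $\{(x,y_j) : j \text{ refines } i_0'\}$ in $X^2$ equals $\{x\} \times \overline{\{y_j : j \text{ refines } i_0'\}}$ (as $\{x\}$ is closed), we conclude $(x,y) \in \overline{G_{i_0'}} \subseteq G_{i_0}$. This holds for all $i_0$, so $(x,y) \in \bigcap_i G_i = \Delta$, forcing $y = x \in V$, contradicting $y \in X \setminus V$. Hence $\mathcal{U}$ is a basis for $\ft_1$; metrizability in the case $\lambda = \aleph_0$ then follows from Urysohn's metrization theorem.

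The essential delicacy is the refinement step $\overline{G_{i'}} \subseteq G_i$ obtained via normality: without it, a cluster point $(x,y)$ of the points $(x,y_i) \in G_i$ would only be guaranteed to lie in each $\overline{G_{i_0}}$, not in $G_{i_0}$ itself, and the conclusion $(x,y) \in \Delta$ would fail. Closing under finite intersections to get a directed indexing is the other key ingredient, ensuring the cluster-point/finite-intersection argument goes through uniformly even when $\lambda$ is uncountable.
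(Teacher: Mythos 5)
Your proof is essentially correct and, for the second half, takes a genuinely different route from the paper's.

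One small but real gap: you write that normality of $(X^2,\ft_1^2)$ applied to $\Delta$ and $X^2\setminus G_i$ ``finds for each $i$ an index $i'$ with $\Delta\subseteq G_{i'}\subseteq\overline{G_{i'}}\subseteq G_i$.'' Normality alone only produces \emph{some} open $W$ with $\Delta\subseteq W\subseteq\overline W\subseteq G_i$; it does not put $W$ into your family. You need an extra step: $X^2\setminus W$ is compact and contained in $X^2\setminus\Delta=\bigcup_j(X^2\setminus G_j)$, so finitely many $G_{j_1},\dots,G_{j_k}$ satisfy $G_{j_1}\cap\cdots\cap G_{j_k}\subseteq W$, and since you closed the family under finite intersections this intersection is some $G_{i'}$, giving $\overline{G_{i'}}\subseteq\overline W\subseteq G_i$. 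This is exactly what your preparatory closure step was set up to do, so the gap is minor and is filled by the ingredients you already assembled, but as written the normality invocation doesn't by itself land inside the indexed family.

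With that fix your argument goes through, and it is a star-refinement (development) argument in the spirit of the Alexandroff--Urysohn / compact-Hausdorff-with-$G_\delta$-diagonal metrization theorems: build finite covers $\mathcal U_i$ by boxes $U\times U\subseteq G_i$, show the stars $\mathrm{st}(x,\mathcal U_i)$ shrink to neighborhoods of $x$ via a cluster-point argument, and read off a basis. The paper's proof of the second half is shorter and avoids normality and the refinement machinery altogether: for each $i$ it covers the compact set $X^2\setminus G_i$ by finitely many boxes $U\times V$ with $U\cap V=\emptyset$; pooling over $i$ gives $\lambda$ such boxes covering $X^2\setminus\Delta$; the topology $\ft_0$ they generate on $X$ is Hausdorff and coarser than the compact $\ft_1$, so $\ft_0=\ft_1$ by the same compact-to-Hausdorff trick already used in the first half, and $\ft_0$ visibly has a subbasis (hence basis) of size $\lambda$. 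The paper's route reuses its one lemma twice; yours is more ``uniformity-theoretic'' and would generalize naturally to settings where one wants a compatible uniformity with a base of size $\lambda$, but at the cost of the normality/refinement bookkeeping.
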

  
 \prf  As the compact $\ft_2$ contains $\ft_1^2$, which is Hausdorff, they are equal, and so $\ft_1$ is compact too.  We have
 $\Delta= \meet_{n \in \Nn} G_n$ with $G_n$ open. If $(a,b) \notin G_n$, find disjoint open $U,V$
 with $a \in U, b \in V$.  By compactness, $X^2 \m G_n$ is covered by   finitely many such $U \times V$.  Thus in all, $X^2 \m \Delta$ is covered by $\lambda$ open $U_i \times V_i$, with $U_i,V_i$ disjoint.  Let $\ft_0$ be the topology generated by these $\lambda$ sets $U_i,V_i$.  Then $(X,\ft_0)$ is Hausdorff, and $\ft_0$ contained in the compact $\ft_1$, so they are also equal.
  \eprf
  \end{section}


\begin{thebibliography}{1234}
 
 
 
 
 \bibitem{leo}   Fred G. Abramson and Leo A. Harrington,
 Models Without Indiscernibles,   The Journal of Symbolic Logic, Vol. 43, No. 3 (Sep., 1978), pp. 572-600
 
 \bibitem{jauslander} Joseph Auslander  and Nelson Markley,
Graphic flows and multiple disjointness,  Transactions of the American Mathematical Society 1985 vol:292 iss:2 pg:483--499.
 
 
 \bibitem{cats} Ben-Yaacov, Itay, Positive model theory and compact abstract theories. J. Math. Log. 3 (2003), no. 1, 85-118
\bibitem{clpz}  Enrique Casanovas, D. Lascar, Anand Pillay, M. Ziegler, Galois Groups of first-order theories,  
Journal of Mathematical Logic 01(02), January 2012
DOI: 10.1142/S0219061301000119 

 \bibitem{hensonetal} Ben Yaacov, Ita\"{i}; Berenstein, Alexander; Henson, C. Ward; Usvyatsov, Alexander,
Model theory for metric structures.   In {\em Model theory with applications to algebra and analysis}. Vol. 2, 315-427, 
London Math. Soc. Lecture Note Ser., 350, Cambridge Univ. Press, Cambridge, 2008. 
\bibitem{bmt}  Ben Yaacov,  Melleray and  Tsankov, Metrizable universal minimal flows have a comeagre orbit, Geom. Func. Anal. 27(1) (2017), 67-77.
\bibitem{byu} Ita\"i Ben Yaacov and Alexander Usvyatsov, Continuous first order logic and local stability, Transactions of the American Mathematical Society 362 (2010), no. 10, 5213--5259
\bibitem{bergelson-leibman}  V. Bergelson and A. Leibman. Set-polynomials and polynomial extension of the Hales-Jewett theorem. Annals of Mathematics, 150:33-75,1999.
\bibitem{caro} Y. Caro, A. Hansberg, Degrees in oriented hypergraphs and Ramsey p-chromatic number, Electron. J. Combin. 19 (3) (2012) 16.
 \bibitem{CK}  Chang, C. C.; Keisler, H. J. Model theory. Third edition. Studies in Logic and the Foundations of Mathematics, 73. North-Holland Publishing Co., Amsterdam, 1990. 
 
 \bibitem{deljunco}   del Junco, Andr\'es,  On minimal self-joinings in topological dynamics. Ergodic Theory Dynam. Systems 7 (1987), no. 2, 211--227.
 
 \bibitem{ellis} Ellis, Robert,
Locally compact transformation groups. 
Duke Math. J. 24 (1957), 119-125. 

\bibitem{gromov} Misha Gromov, Number of Questions, April 26, 2014.  \url{https://www.ihes.fr/~gromov/miscellaneous-math/583/}

\bibitem{evans} David M. Evans, Trivial stable structures with non-trivial
reducts,  J. London Math. Soc. (2) 72 (2005) 351-363

  \bibitem{ehn}
 Evans DM,   Hubi\v cka J, Ne\v set\v ril J, Automorphism groups and Ramsey properties of sparse graphs, 
 Proc. London Math. Soc.(3) 119 (2019) 515-546 
  \bibitem{ehn1}
 Evans DM, Hubi\v cka J, Ne\v set\v ril,J.,  Ramsey properties and extending partial automorphisms for classes of finite structures (ArXiv)
\bibitem{HKP} 
 Ehud Hrushovski, Krzysztof Krupi\'nski, Anand Pillay, Amenability and definability, arXiv:1901.02859
 
 \bibitem{H-cigha}  
  Ehud Hrushovski,  Groupoids, imaginaries and internal covers, math.LO/0603413 , 
Turkish Journal of Mathematics , Volume 36, Issue 2, (2012) pp. 173-198. 

 \bibitem{kms}  Kaplan, Itay; Miller, Benjamin D.; Simon, Pierre ,The Borel cardinality of Lascar strong types.  
J. Lond. Math. Soc. (2) 90 (2014), no. 2, 609--630. 
\bibitem{ks}  Kaplan, Itay;   Simon, Pierre, 
Automorphism groups of finite topological rank, arXiv:1709.01918
 
 \bibitem{KPT} A. S. Kechris, V.G. Pestov, S. Todor\v cevi\'c,  Fra\"{i}ss\'{e} limits, Ramsey theory, and topological dynamics of automorphism groups, GAFA, 15 (2005), 106-189.
 
 \bibitem{KP}  Kim, Byunghan; Pillay, Anand Simple theories. Joint AILA-KGS Model Theory Meeting (Florence, 1995). Ann. Pure Appl. Logic 88 (1997), no. 2-3, 149--164.
 
 
 \bibitem{lurie}  Jacob Lurie, Ultracategories, \url{http://www.math.harvard.edu/~lurie/papers/Conceptual.pdf}
 \bibitem{makkai} Mihaly Makkai, Stone duality for first-order logic, Adv. Math. 65 (1987) no. 2, 97--170
 \bibitem{mnt} Julien Melleray, Lionel Nguyen Van The and Todor Tsankov, Polish groups with metrizable universal minimal flows, International Mathematics Research Notices, 2016(5), 1285-1307.
 
 \bibitem{morley-vaught} M. Morley and R. Vaught, Homogeneous universal models, Math. Scand. 11 (1962) 37-57.
 
 \bibitem{morley-types}  Morley, Michael,
Applications of topology to $L_{\omega_1,\omega}$, Proceedings of the Tarski Symposium (Proc. Sympos. Pure Math., Vol. XXV, Univ.
California, Berkeley, Calif., 1971), pp. 233-240. Amer. Math. Soc., Providence, R.I.,
1974.
\bibitem{morley-number} Morley, Michael, The number of countable models. J. Symbolic Logic 35 1970 14--18
 \bibitem{mycielski}   Mycielski, Jan Some compactifications of general algebras. Colloq. Math. 13 1964 1-9
 \bibitem {mrn} Jan Mycielski, C. Ryll-Nardzewski,  Equationally compact algebras II, 
Fundamenta Mathematicae 61 (1967), 271-281  

\bibitem{kp}  K.  Krupi\'nski, A. Pillay, Generalized Bohr compactification and 
model-theoretic connected components, Mathematical Proceedings of the 
Cambridge Philosophical Society (163), 219-249, 2017.
\bibitem{kr}  Krzysztof Krupi\'nski, Tomasz Rzepecki, Galois groups as quotients of Polish groups, preprint
\bibitem{kns} K Krupi\'nski, L Newelski, P Simon, Boundedness and absoluteness of some dynamical invariants in model theory,  arXiv:1705.00159, 2017.
\bibitem{kpr} Krupi\'nski, Krzysztof; Pillay, Anand; Rzepecki, Tomasz,  Topological dynamics and the complexity of strong types. Israel J. Math. 228 (2018), no. 2, 863-932.
\bibitem{lascar} D. Lascar, On the category of models of a complete theory, J. Symbolic Logic 47 (1982), no. 2, 249--266
\bibitem{lp} Lascar, Daniel; Poizat, Bruno
An introduction to forking. 
J. Symbolic Logic 44 (1979), no. 3, 330-350.
\bibitem{poizat} Poizat, Bruno Petrovich, 
Une th\'eorie de Galois imaginaire.  J. Symbolic Logic 48 (1983), no. 4, 1151–1170 (1984).
\bibitem{nr}  Ne\v set\v ril - R\"odl,
 Partitions of finite relational and set systems., J. Combinatorial Theory Ser. A, 22(3) (1977), 289--312.
A structural generalization of the Ramsey theorem,  Bull. Amer. Math. Soc.
Volume 83, Number 1 (1977), 127-128.
\bibitem{vt} Lionel Nguyen Van Th\'e, More on the Kechris-Pestov-Todor\v cevi\'c Correspondence: Pre-compact Expansions, Fund. Math., 222 (2013), 19-47
\bibitem{ec} A. Pillay, Forking in the category of existentially closed structures, in Connections
Between Model Theory and Algebraic and Analytic Geometry, ed. A. Macintyre,
Quaderni di Matematica, Vol. 6, University of Naples, 2000.  
\bibitem{lazy} S. Shelah, The lazy model-theoretician's guide to stability, Logique Anal. 71 (1975),
241-308.
\bibitem{qenip} Shelah, Saharon Strongly dependent theories. Israel J. Math. 204 (2014), no. 1, 1--83. 
\bibitem{wtaylor} Walter Taylor, Some constructions of compact algebras, Annals of Mathematical Logic v. 3, no. 4 (1971) 395-437


\bibitem{vdW}
B. L. van der Waerden, Beweis einer Baudetschen Vermutung, Nieuw Archief 15 (1927), 212-216

\bibitem{GRS}
 R. Graham, B. Rothschild, J. Spencer, Ramsey Theory, John Wiley
and Sons, N.Y. (1980). 

\bibitem{milman}
 V.D. Milman, The infinite dimensional geometry of the unit sphere of a
Banach space, Soviet Math. Dokl. 8 (1967), 1440-1444 

\bibitem{milman2} V. Milman, Dvoretzky's theorem- thirty years later, GAFA Vol. 2, No. 4 (1992)

 

\bibitem{dual}   R. L. Graham, B. Rothschild, Ramsey's theorem for $n$-parameter sets, Trans.
Amer. Math. Soc. 159 (1971), 257-292.

\bibitem{glr}  
R. L. Graham; K. Leeb; B. L. Rothschild, Ramsey's Theorem for a class of categories,
Proceedings of the National Academy of Sciences of the United States of America, Vol. 69, No. 1. (Jan., 1972), pp. 119-120.

\bibitem{ramsey} Ramsey, F. P. (1930), On a Problem of Formal Logic. 

\bibitem{ziegler} Martin Ziegler, Introduction to the Lascar group, in: Tits buildings and the model theory of groups (W\"rzburg, 2000), 279--298,
London Math. Soc. Lecture Note Ser., 291, Cambridge Univ. Press, Cambridge, 2002

%

 

\bibitem{zucker} Andy Zucker, Topological dynamics of automorphism groups, ultrafilter com- binatorics, and the Generic Point Problem, Trans. Amer. Math. Soc., 368(9) (2016), 6715--6740. 
\end{thebibliography}
   \end{document}